\documentclass[11pt,a4paper,draft]{amsart} 
\usepackage{amssymb,amscd,amsmath, young,mathrsfs,mathabx}
\usepackage{mathrsfs, mathdots,rotating,multirow} 
\usepackage[mathcal]{eucal} 
\usepackage{float}
\usepackage[usenames]{color}
\usepackage{soul}
\usepackage{longtable}


\addtolength{\textwidth}{2cm}
\addtolength{\hoffset}{-1cm}

\linespread{1.1}

\theoremstyle{plain}
\newtheorem{thm}{Theorem}[section]
\newtheorem{lem}[thm]{Lemma}
\newtheorem{pro}[thm]{Proposition}
\newtheorem{cor}[thm]{Corollary}
\newtheorem*{claim*}{Claim}

\newtheorem{con}[thm]{Conjecture}

\newtheorem{qun}[thm]{Question}

\theoremstyle{remark}

\newtheorem{rem}[thm]{Remark}
\newtheorem{exm}[thm]{Example}

\newtheorem{dfn}[thm]{Definition}
\newtheorem{fact}[thm]{Facts}

\numberwithin{equation}{section}


\numberwithin{table}{section}

\newcommand{\N}{\mathbb{N}}
\newcommand{\Z}{\mathbb{Z}}
\newcommand{\Q}{\mathbb{Q}}

\newcommand{\tensor}{\otimes}

\newcommand{\mff}{\mathfrak{f}}

\renewcommand{\epsilon}{\varepsilon}

\renewcommand{\phi}{\varphi}
\renewcommand{\theta}{\vartheta}

\newcommand{\ideal}{\triangleleft}

\newcommand{\Zp}{\mathbb{Z}_{p}}
\newcommand{\bsm}{\boldsymbol{m}}

\DeclareMathOperator{\gr}{gr}

\DeclareMathOperator{\Gr}{Gr}


\def \idealgr {\triangleleft_\textup{gr}}

\def \idealn {\triangleleft_{n}}
\def \idealkn {\triangleleft_{k,n}}

\def \bfm {{\bf m}}

\def \mcL {\ensuremath{\mathcal{L}}}

\def \Fp {\ensuremath{\mathbb{F}_p}}

\def \Fq {\ensuremath{\mathbb{F}_q}}
\def \Zp  {\mathbb{Z}_p}

\author{Marcus du Sautoy} \address{Mathematical Institute, University of
	Oxford, Oxford OX2 6GG, Great Britain}
\email{dusautoy@maths.ox.ac.uk}

\author{Seungjai Lee} \address{The Research Institute of Basic Sciences, Seoul National University, Seoul 08826, Republic of Korea}
\email{seungjai.lee@snu.ac.kr}








\begin{document}
	
	\title{Uniformity in Higher class Free Lie algebras}

	\begin{abstract}
		Let $\mff_{c,2}$ denote a free class-$c$ Lie rings on $2$ generators. We investigate the zeta functions enumerating graded ideals in $\mff_{c,2}(\Fp)$ for $c\leq6$, prove that they are uniformly given by polynomials in $p$ for $c\leq5$ and not uniformly given by a polynomial in $p$ for $c=6$. We also show that the zeta functions enumerating one-step graded ideals $\mff_{c,2}(\Fp)$ is always given by a polynomial in $p$ for all $c$. 
	\end{abstract}
	
	\maketitle

	\thispagestyle{empty}

\section{Introduction}
\subsection{Background and Motivation}
\subsubsection{Enumerating ideals in Free Lie rings}
Let $L$ be a Lie ring, additively isomorphic to $\Z^{n}$ for some $n\in\N$, and for $m\in\N$  let 
\[a_{m}^{\triangleleft}(L):=|\{H\triangleleft L\mid|L:H|=m\}|.\]
In their seminal paper \cite{GSS/88}, Grunewald, Segal, and Smith defined the \textit{ideal zeta functions of $L$} to be the Dirichlet generating series
\[\zeta_{L}^{\triangleleft}(s):=\sum_{m=1}^{\infty}a_{m}^{\triangleleft}(L)m^{-s},\]
where $s$ is a complex variable. This admits a natural Euler decomposition $\zeta_{L}^{\triangleleft}(s)=\prod_{p\textrm{ prime}}\zeta_{L(\Zp)}^{\triangleleft}(s)$,
where 
\[\zeta_{L(\Zp)}^{\triangleleft}(s)=\zeta_{L\tensor\Zp}^{\triangleleft}(s)=\sum_{i=0}^{\infty}a_{p^{i}}^{\triangleleft}(L)p^{-is}.\] We call these the \textit{local ideal zeta functions of $L$}. It is proven \cite[Theorem 1]{GSS/88} that these local functions are rational functions in $p$ and $p^{-s}$.

One of the major questions raised in \cite{GSS/88} concerns the behavior of the rational functions $\zeta_{L(\Zp)}^{\triangleleft}(s)$ as $p$ varies: the \textit{uniformity} problem.
\begin{dfn}\label{dfn:zp.uniform}
	The ideal zeta function $\zeta_{L}^{\triangleleft}(s)$ is \textit{finitely uniform} if there exist rational functions $W_{1}(X,Y),\ldots,W_{k}(X,Y)\in\Q(X,Y)$ for $k\in\mathbb{N}$ such that, for almost all primes $p$, 
	\[\zeta_{L(\Zp)}^{\triangleleft}(s)=W_{i}(p,p^{-s})\]
	for some $i\in\{1,\ldots,k\}$. It is  \textit{uniform} if $k=1$, and \textit{non-uniform} if it is not finitely uniform.  
\end{dfn}
Based on the evidence of the small examples, in \cite{GSS/88} the authors speculated whether $\zeta_{L}^{\triangleleft}(s)$ is finitely uniform for any Lie ring $L$, which is now known to be false (cf. \cite{duS-ecI/01}).

However, in \cite{GSS/88} the authors had sufficient confidence about the zeta
functions for a certain class of Lie rings to make a following conjecture:
\begin{con}[\cite{GSS/88}]\label{con:uniformity}
	Let $\mff_{c,d}$ denote the free class-$c$ nilpotent Lie rings on $d$ generators defined over $\Z$. Then for any $c,d\in\N$, the ideal zeta function $\zeta_{\mff_{c,d}}^{\triangleleft}(s)$ is uniform, i.e., there exists a rational function $W(X,Y) \in \mathbb
	\Q(X,Y)$ so that for almost all primes
	\[
	\zeta_{\mff_{c,d}(\Zp)}^{\triangleleft}(s) = W(p,p^{-s}).
	\]
\end{con}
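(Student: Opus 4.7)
The plan is to realise $\zeta_{\mff_{c,d}(\Zp)}^{\triangleleft}(s)$ as a single $\mathfrak{p}$-adic cone integral whose Denef data is manifestly $p$-independent, and then invoke the usual rationality machinery to extract a formula of the asserted form. First I would fix a Hall basis of $\mff_{c,d}$ compatible with the lower central series $\mff_{c,d} = L_1 \supset L_2 \supset \cdots \supset L_c \supset L_{c+1} = 0$, and parametrise full-rank ideals $H \leq \mff_{c,d}(\Zp)$ by matrices in upper-triangular Hermite normal form with respect to this basis. The condition $[\mff_{c,d},H] \subseteq H$ translates into a finite list of polynomial congruences on the matrix entries, and $|\mff_{c,d}:H|$ is read off the diagonal. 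This gives
\[
\zeta_{\mff_{c,d}(\Zp)}^{\triangleleft}(s) = \int_{\mcV_p} \prod_{i=1}^{n} |x_{ii}|^{a_i s - b_i} \, d\mu_p,
\]
with $\mcV_p \subset \Zp^N$ semi-algebraic and defined by equations coming from the Lie bracket structure constants of $\mff_{c,d}$.

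The ground for optimism is that $\mff_{c,d}$ is a Lie ring defined over $\Z$ whose abelianisation carries a $\GL_d(\Z)$-action, and every graded piece $L_i/L_{i+1}$ is a universal $\GL_d$-module whose structure constants are integers independent of any prime. I would therefore look for a $\GL_d$-equivariant resolution $Y \to \mbbA^N_{\Z}$ of the singularities of the ideal variety, defined over $\Z[1/N]$ for some explicit integer $N$. By Denef's formula the zeta function then decomposes as
\[
\zeta_{\mff_{c,d}(\Zp)}^{\triangleleft}(s) = \sum_{I} c_I(p,p^{-s}) \, |\overline{E}_I(\Fp)|,
\]
the sum running over subsets of exceptional divisors $E_I$ of $Y$, with each $c_I$ a rational function in $p$ and $p^{-s}$ whose coefficients are independent of $p$. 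Uniformity then reduces to the purely geometric statement that every stratum $\overline{E}_I$ has polynomial $\Fp$-point count.

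The main obstacle, which I expect to be where such a plan breaks down, is controlling the geometry of the ideal variety as $c$ grows. The locus in the flag variety of sublattices of $\mff_{c,d}$ cut out by $\ad$-invariance is a determinantal-type variety whose defining equations are quadratic in the Plücker coordinates, with coefficients read off the Lie bracket. For small $c$ this locus can plausibly be realised as a disjoint union of $\GL_d$-orbits indexed by explicit combinatorial data, each of polynomial count; I would expect this strategy to succeed for $c \leq 5$ by direct calculation, in line with the uniformity statement asserted in the abstract. What I do not see how to rule out, and what I would flag as the genuinely hard part, is the appearance from some class $c$ onward of components whose $\Fp$-point count involves a non-trivial geometric invariant, such as the number of points on a curve of positive genus. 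In that situation the conjecture as stated must fail, and the best outcome one can hope for is a precise understanding of the threshold at which the geometry ceases to be polynomial-countable, which appears to be exactly the question the present paper settles for $c=6,d=2$.
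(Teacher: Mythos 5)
The statement you were handed is not a theorem of this paper but an open conjecture attributed to Grunewald, Segal, and Smith in \cite{GSS/88}; the paper offers no proof of it, so there is no ``paper's own proof'' to compare against. Indeed the paper's main new contribution, Theorem \ref{thm:fin.uniform}, points in the opposite direction: the graded $\Fp$-approximation $\zeta_{\mff_{6,2}(\Fp)}^{\idealgr}(s)$ is \emph{not} given by a single polynomial in $p$, the coefficient of $t^{9}$ splitting according to $p\bmod 8$. Since every graded $\Fp$-ideal is also counted by $\zeta_{\mff_{6,2}(\Zp)}^{\triangleleft}(s)$, the authors take this as strong evidence that the ungraded $\Zp$-zeta function of $\mff_{6,2}$ is unlikely to be uniform either, i.e.\ that Conjecture \ref{con:uniformity} is probably false from class $6$ onward.

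Your proposal is therefore not, and could not be, a proof; but to your credit you say so explicitly, and the diagnosis you give is accurate and well-calibrated. The cone-integral framework and Denef-style decomposition you describe is the standard route to rationality of $\zeta_{L(\Zp)}^{\triangleleft}(s)$, and you correctly identify the crux: uniformity would require every stratum of the relevant resolution to have polynomial $\Fp$-point count, and there is no structural reason for this once the nilpotency class grows. What the paper actually does is cheaper and sharper than what you outline: it bypasses the $\Zp$-cone integral entirely and works with graded ideals over $\Fp$, reducing the count to explicit rank conditions on matrices built from the Lie structure constants (Table \ref{tab:basis}). In Case 2-1 of Section \ref{sec:c=6}, with $(m_4,m_5,m_6)=(1,5,8)$, these rank conditions collapse to the quadratic equation \eqref{eq:residue}, whose $\Fp$-solution count depends on the Legendre symbol $\bigl(\tfrac{2}{p}\bigr)$, hence on $p\bmod 8$. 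So the ``non-polynomial-countable component'' you flagged as the likely obstruction does materialise, though it enters already at the level of a conic (a quadratic residue condition), well below the ``curve of positive genus'' threshold you had in mind. In short: your plan would not prove the conjecture, you correctly say why, and your stated reason is exactly the mechanism the paper exhibits at $c=6$, $d=2$.
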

In fact, \cite[Theorem 2]{GSS/88} proved that $\zeta_{\mff_{2,d}}^{\triangleleft}(s)$ is uniform for all $d\in\N$. However, for higher classes (e.g. $c>2$ and $d\geq2$), the only known result regarding the uniformity of $\zeta_{\mff_{c,d}}^{\triangleleft}(s)$ is that of $\mff_{3,2}$, the free class-3 Lie ring on 2 generators, which is proved to be uniform by Woodward \cite{Woodward/05}. Computing $\zeta_{\mff_{3,3}}^{\triangleleft}(s)$ or $\zeta_{\mff_{4,2}}^{\triangleleft}(s)$ still look out of reach at the moment.

What makes $\mff_{c,d}$ particularly interesting objects is their connection with an enumerative problem in finite group theory. Let $f_{n}(p)$ denote the number of isomorphism classes of $p$-groups of order $p^{n}$. In 1960, Higman \cite{HigmanII/60} conjectured that for a fixed $n$, there exists a fixed integer $N$ and finitely
many polynomials $g_{i}(x)\;(i=1,2,\ldots,N)$ such that if
$p\equiv i\bmod N$ then 
\[
f_{n}(p)=g_{i}(p).
\]
This is now known as Higman's PORC (Polynomial On Residue Classes) Conjecture. This conjecture is currently known to be true for $n\leq7$, but still remains open for $n\geq8$ (see \cite{BEO/02}, \cite{VL/2012}, \cite{VL/2015}, or \cite{LVL22} for a detailed account with recent works).

Connection between the uniformity of ideal zeta functions of nilpotent Lie rings and Higman's PORC conjecture was first observed by the first author, where he conjectured \cite[Conjecture 5.11]{duS/02} that proving the uniformity of $\zeta_{\mff_{c,d}}^{\triangleleft}(s)$ for all $c,d\in\N$ might contribute to prove Higman's PORC conjecture. In particular, in \cite[Conjecture 5.11]{duS/02} he showed that the problem
of enumerating finite $p$-groups  may be reduced to enumerating ideals
in $\mff_{c,d}$, up to the action of its algebraic automorphism group. Currently several explicit  observations (e.g, \cite{DuSVL/2012,Lee/2016,LEE2022}) have been made to support the connection between the uniformity of ideal zeta functions and Higman's PORC conjecture.
\subsubsection{Graded ideal zeta functions of Free Lie rings}
One of the main obstacles to understand the nature of $\zeta_{\mff_{c,d}(\Zp)}^{\triangleleft}(s)$ and to answer Conjecture \ref{con:uniformity} is that they are in general too difficult to compute. In this light, in \cite{LeeVoll/18} the second author and Voll investigated the \textit{graded ideal zeta functions} of $\mff_{c,d}$.

Let $L$ be a nilpotent Lie
ring of nilpotency class $c$, free of finite rank over $\Z$, with
lower central series $(\gamma_{i}(L))_{i=1}^{c}$.   For $i \in
\{1,\dots,c\}$, set $L_i:=\gamma_{i}(L)/\gamma_{i+1}(L)$.  An ideal $I$ of $L$ (of
finite index in $L$) is \emph{graded} if
$I=\bigoplus_{i=1}^{c}(I\cap L_i)=I_{1}\oplus\cdots\oplus I_{c}$, where $I_i=I\cap L_{i}$. In this case we write
$I\triangleleft_{\gr}L$. We define the \emph{graded ideal zeta
	function of $L$} enumerating graded ideals in $L$ of finite index in $L$ as the Dirichlet generating series
\begin{equation}\label{def:graded.ideal.z.f.}
	\zeta_{L}^{\idealgr}(s)=\sum_{I\triangleleft_{\gr}L}|L:I|^{-s},
\end{equation}
where $s$ is a complex variable. $\zeta_{L}^{\idealgr}(s)$ converges on a complex half-plane, and it yields  a natural Euler decomposition $\zeta_{L}^{\idealgr}(s)=\prod_{p\textrm{ prime}}\zeta_{L(\Zp)}^{\idealgr}(s)$. For a similar definition in a slightly more general setting, see \cite[Section~3.1]{Rossmann/16} or \cite[Section 1.1]{LeeVoll/18}.

Note that $\zeta_{L}^{\idealgr}(s)$ can be seen as ``approximations'' of $\zeta_{L}^{\triangleleft}(s)$. Indeed, almost all Euler factors
$\zeta^{\idealgr}_{L(\Zp)}(s)$ actually enumerate a sublattice
of the lattice of ideals enumerated by
$\zeta^{\triangleleft}_{L(\Zp)}(s)$. In \cite{LeeVoll/18} the authors proved that $\zeta_{\mff_{c,d}}^{\idealgr}(s)$ is uniform for $\mff_{2,d}$ and $(c,d)=\{(3,2), (3,3), (4,2)\}$, extending the results from the ``ungraded'' setting.

\subsubsection{Ideal zeta functions over $\Fp$}
Let $\Fq$ denote a finite field on $q=p^r$ elements, where $p$ is a prime, and let $A$ be a finite-dimensional $\Fq$-algebra, additively isomorphic to $\Fq^{n}$. Let $a_{q^{i}}^{\ideal}(A)$ denote the number of ideals of $A$ of codimension $i$. One can analogously define the \textit{ideal zeta functions of $\Fq$-algebras} as a finite Dirichlet polynomial
\begin{equation}\label{eq:fp.zeta}
	\zeta_{A}^{\ideal}(s):=\sum_{B\ideal A}|A:B|^{-s}=\sum_{i=0}^{n}a_{q^{i}}^{\ideal}(A)q^{-is},
\end{equation}
where $s$  is a complex variable.

\begin{exm}
	For $n\in\N$, we have
	\begin{align*}
		\zeta_{\Fq^{n}}^{\ideal}(s)&=\sum_{i=0}^{n}a_{q^{i}}^{\ideal}(\Fq^{n})q^{-is}=\sum_{i=0}^{n}\binom{n}{i}_{q}\,q^{-is},
	\end{align*}
	where 
	\[\binom{n}{i}_{q}=\begin{cases}\frac{(1-q^n)(1-q^{n-1})\cdots(1-q^{n-i+1})}{(1-q)(1-q^2)\cdots(1-q^i)}&i\leq n,\\0&i>n,\end{cases}\] is the Gaussian binomial coefficient that counts the number of subspaces of (co)dimension $i$ in a vector space of dimension $n$ over $\Fq$. 
\end{exm}

Suppose now $q=p$  and $A=L(\Fp):=L\otimes\Fp$, where $L$ is a Lie ring of rank $n$ as defined earlier. In this setting one can view $\zeta_{L(\Fp)}^{\ideal}(s)$ as the ``simplest finite approximations" of $\zeta_{L(\Zp)}^{\ideal}(s)$ (which is why we define them as Dirichlet polynomials), in the sense that $\Fp=\Z/p\Z$ can be seen as a ``first layer" of $\Zp={\varprojlim}\Z/p^{i}\Z$, and all the ideals enumerated by $\zeta_{L(\Fp)}^{\ideal}(s)$ are also enumerated by $\zeta_{L(\Zp)}^{\ideal}(s)$. 

Let us analogously define the $\Fp$-uniformity as follows:
\begin{dfn}
	The ideal zeta function $\zeta_{L}^{\triangleleft}(s)$ is $\Fp$-\textit{finitely uniform}  if there exist polynomials $W_{1}^{\triangleleft}(X,Y),\ldots,W_{k}^{\triangleleft}(X,Y)\in\mathbb{Q}[X,Y]$ for $k\in\mathbb{N}$ such that, for almost all primes $p$, 
	\[\zeta_{L(\Fp)}^{\triangleleft}(s)=W_{i}^{\triangleleft}(p,p^{-s})\]
	for some $i\in[k]$, $\Fp$-\textit{uniform} if $k=1$, and $\Fp$-\textit{non-uniform} if it is not $\Fp$-finitely uniform.
\end{dfn}
In this sense, the ``classic'' uniformity problem as defined in Definition \ref{dfn:zp.uniform} can be viewed as a $\Zp$-uniformity. In \cite{Lee/20arxiv1}, the second author initiated the study of the zeta functions of $\Fp$-algebras. The results computed in \cite{Lee/20arxiv1} suggested that $\zeta_{L(\Fp)}^{\ideal}(s)$ is a good approximation of $\zeta_{L(\Zp)}^{\ideal}(s)$ for the uniformity problem, in the sense that there is no known example of a Lie ring $L$ whose $\Fp$-uniformity is different to its $\Zp$-uniformity yet.
\subsection{Main results and organizations}
In this article, we aim to further simplify the setting and investigate the uniformity question for graded ideal zeta functions of $\mff_{c,2}(\Fp)$, the free class-$c$ nilpotent $\Fp$-Lie algebras on 2 generators.

The  main results in this article are the following theorems: 
\begin{thm}\label{thm:uniform}
For $c\leq 5$, the graded ideal zeta function $\zeta_{\mff_{c,2}(\Fp)}^{\idealgr}(s)$
is uniformly given by a polynomial in $p$ for almost all primes. In other words, $\zeta_{\mff_{c,2}}^{\idealgr}(s)$ is $\Fp$-uniform for $c\leq5$.	
\end{thm}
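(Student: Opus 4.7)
The plan is to enumerate the graded ideals $I = \bigoplus_{i=1}^c I_i$ of $\mff_{c,2}(\Fp)$ by stratifying on the dimension vector $\mathbf{d} = (\dim_{\Fp} I_1,\ldots, \dim_{\Fp} I_c)$. Each $I_i$ is an $\Fp$-subspace of $L_i := \gamma_i(\mff_{c,2}(\Fp))/\gamma_{i+1}(\mff_{c,2}(\Fp))$, and the ideal condition amounts to $[L_j, I_{i-j}]\subseteq I_i$ for all $j<i$. By Witt's formula, $(\dim L_1,\ldots,\dim L_5)=(2,1,2,3,6)$, so the total $\Fp$-rank of $\mff_{c,2}(\Fp)$ is at most $14$ for $c\le 5$, and the number of admissible dimension vectors is small. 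I would treat $c\le 5$ in a unified way: for each admissible $\mathbf{d}$, show that the number of graded ideals with dimension vector $\mathbf{d}$ is a polynomial in $p$, and then sum the contributions, each of which carries a factor $p^{-s\,\mathrm{codim}(I)}$ that is a polynomial in $p^{-s}$.

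Given a fixed $\mathbf{d}$, the natural enumeration builds $I$ level by level. Having chosen $I_1,\ldots,I_{j-1}$, the ideal condition forces $I_j$ to contain the subspace
\[
\mcJ_j := \sum_{i=1}^{j-1}[L_i, I_{j-i}]\subseteq L_j,
\]
after which $I_j$ may be any $d_j$-dimensional subspace of $L_j$ containing $\mcJ_j$. The number of such subspaces is the Gaussian binomial $\binom{\dim L_j - \dim\mcJ_j}{d_j-\dim\mcJ_j}_p$, which is a polynomial in $p$. The contribution of $\mathbf{d}$ to $\zeta_{\mff_{c,2}(\Fp)}^{\idealgr}(s)$ is thus polynomial provided $\dim\mcJ_j$ is constant across the previously enumerated configurations of $(I_1,\ldots,I_{j-1})$.

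The main obstacle is controlling $\dim\mcJ_j$, which \emph{a priori} depends not just on the dimensions $d_1,\ldots,d_{j-1}$ but on the particular choices of $I_i$ within their Grassmannians. I would refine the stratification so that within each refined stratum $\dim\mcJ_j$ is constant, and then argue that each such stratum is enumerated by a polynomial in $p$. The key tool is the action of $\GL_2(\Fp)$ on $L_1$, which extends to a graded action on $\mff_{c,2}(\Fp)$ and preserves the enumeration. Since $\GL_2(\Fp)$ acts transitively on lines in $L_1$ with stabiliser of polynomial order in $p$, and its induced orbits on the Grassmannians of $L_j$ for small $j$ can be enumerated explicitly (with polynomial-size orbits), one verifies by a case-by-case bracket computation in a Hall basis that for $c\le 5$ every refined stratum produces a polynomial count.

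Concretely, the cases $\dim I_1 \in\{1,2\}$ collapse by freeness: $\dim I_1=2$ forces $I = \mff_{c,2}(\Fp)$, while $\dim I_1=1$ forces $I_i=L_i$ for all $i\ge 2$ and contributes $(p+1)p^{-s}$. This leaves $I_1=0$, where one further subdivides on $\dim I_2\in\{0,1\}$. The deepest case is $I_1=I_2=0$, where the choices of $I_3, I_4, I_5$ must be analysed in detail; here $\dim L_3=2$, $\dim L_4=3$, $\dim L_5=6$, and the bracket maps $L_1\otimes L_i\to L_{i+1}$ and $L_2\otimes L_{i-1}\to L_{i+1}$ have $p$-independent ranks on each $\GL_2(\Fp)$-orbit, as an explicit Hall-basis calculation shows. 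Summing the finitely many polynomial contributions yields a polynomial expression for $\zeta_{\mff_{c,2}(\Fp)}^{\idealgr}(s)$, establishing $\Fp$-uniformity for $c\le 5$. The analogous argument breaks at $c=6$ precisely because $\dim L_6=9$ is large enough to admit a $\GL_2(\Fp)$-orbit structure on the relevant configurations whose size depends on $p$, foreshadowing the companion non-uniformity theorem.
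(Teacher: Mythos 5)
Your overall strategy---stratify by the dimension vector $\mathbf{m}=(\dim I_1,\dots,\dim I_c)$, build $I$ level by level, and count the choices of $I_{j}$ containing the bracket-image of the lower levels with a Gaussian binomial---is exactly the framework the paper uses. Your observations that $\dim I_1=2$ forces $I=L$, and that $\dim I_1=1$ forces $I_i=L_i$ for $i\geq 2$, are correct and appear implicitly in the paper via its Lemma on $d^{(k)}_\phi$. However, there are two points where your proposal departs from and falls short of the paper.

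First, you propose to refine the dimension-vector stratification by $\GL_2(\Fp)$-orbits so that $\dim\mcJ_j$ becomes stratum-constant, and you defer the verification to an unspecified Hall-basis computation. The paper's central insight (its Lemma~4.1) is sharper and makes this refinement unnecessary for $c\leq 5$: $\dim\phi(I_k)$ depends only on $m_k=\dim I_k$, not on the particular $m_k$-dimensional subspace $I_k\leq L_k$. Concretely, $m_3=1$ always gives $\dim\phi(I_3)=2$ and $m_3=2$ gives $\dim\phi(I_3)=3$; and $m_4=i$ always gives $\dim\phi(I_4)=2i$, the latter because $\phi_1(L_4)\cap\phi_2(L_4)=0$ in $L_5$. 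Once this is established, the zeta function is a single nested sum of products of Gaussian binomials $\binom{d_{k}-d^{(k-1)}_\phi(m_{k-1})}{m_k-d^{(k-1)}_\phi(m_{k-1})}_p$, manifestly a polynomial in $p$ for each dimension vector, with no orbit analysis at all. Your $\GL_2$-orbit route could in principle work, but it is more machinery than the problem needs and invites subtle questions (is the number of orbits itself independent of $p$? is $\dim\phi$ genuinely orbit-constant in the form you need?) that the paper bypasses entirely.

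Second, and more importantly, you have not actually carried out the computation that carries all the weight. The sentence ``one verifies by a case-by-case bracket computation in a Hall basis that for $c\leq 5$ every refined stratum produces a polynomial count'' is precisely the content of the theorem: without the analogue of Lemma~4.1 (or its orbit-refined version) being worked out, the proof is incomplete. The only genuinely nontrivial fact is the disjointness $\phi_1(L_4)\cap\phi_2(L_4)=0$ inside $L_5$, and that is what must be checked. Your remark about why the argument breaks at $c=6$ is a reasonable heuristic (the failure there is ultimately a quadratic-residue condition detected by $p\bmod 8$ in the rank of $\phi(I_5)$), but as stated it mischaracterises the obstruction slightly: the issue is not that orbit sizes depend on $p$ (they always do, polynomially), but that $\dim\phi(I_5)$ genuinely depends on the choice of $I_5$ within a fixed dimension, and the resulting stratification is not polynomial in $p$.
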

\begin{thm}\label{thm:fin.uniform}
The graded ideal zeta function $\zeta_{\mff_{6,2}(\Fp)}^{\idealgr}(s)$ is not uniformly given by a polynomial in $p$. In particular, $\zeta_{\mff_{6,2}}^{\idealgr}(s)$ is not $\Fp$-uniform.
\end{thm}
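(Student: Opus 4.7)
The plan is to stratify graded ideals of $\mff_{6,2}(\Fp)$ by their dimension profile $\mathbf{d}=(d_1,\ldots,d_6)$ with $d_i=\dim_{\Fp} I_i$, and to exhibit at least one profile whose contribution to $\zeta_{\mff_{6,2}(\Fp)}^{\idealgr}(s)$ fails to be polynomial in $p$. Witt's formula gives $(\dim L_1,\ldots,\dim L_6)=(2,1,2,3,6,9)$. Since each $L_i$ with $i\geq 2$ is the image of $L_1\otimes L_{i-1}$ under the Lie bracket, a graded subspace $I=\bigoplus_i I_i$ is an ideal if and only if $[L_1,I_i]\subseteq I_{i+1}$ for $i=1,\ldots,5$. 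Let $\mcV_{\mathbf{d}}$ denote the subvariety of $\prod_{i=1}^{6}\Gr(d_i,L_i)$ cut out by these linear conditions; then
\[
\zeta_{\mff_{6,2}(\Fp)}^{\idealgr}(s)=\sum_{\mathbf{d}}|\mcV_{\mathbf{d}}(\Fp)|\,p^{-s\sum_{i=1}^{6}(\dim L_i-d_i)}.
\]

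The strata with $d_1\in\{0,2\}$ are routine: $d_1=2$ forces $I=L$, and $d_1=0$ reduces to graded ideal enumeration in $\bigoplus_{i\geq 2}L_i$, which falls under the polynomial regime of lower class. The main case is $d_1=1$. After using the $\GL_2(\Fp)$-action to fix a line $I_1\subseteq L_1$, one works through the remaining conditions $[L_1,I_i]\subseteq I_{i+1}$ for $i\geq 2$; at each stage the solution set forms a fibration over the previously chosen $I_j$'s, and for $c\leq 5$ these fibrations have polynomial $\Fp$-point count, consistent with Theorem \ref{thm:uniform}. The new phenomenon expected at $c=6$ arises from the bracket map $L_1\otimes L_5\to L_6$ combined with a non-generic choice of $I_5\subseteq L_5$ of intermediate dimension, which can cut out a subvariety of $\Gr(d_6,9)$ fibred over a lower Grassmannian with fibres forming a family of curves of positive geometric genus. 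I would locate, via Pl\"ucker-coordinate enumeration (aided by computer algebra), a stratum $\mathbf{d}^{\ast}$ for which $\mcV_{\mathbf{d}^{\ast}}$ contains a smooth elliptic curve $E$ defined over $\Q$ as a fibre component.

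The main obstacle is precisely this explicit identification: the bracket conditions are individually linear, so non-polynomial dependence on $p$ emerges only after intersecting several such conditions with Pl\"ucker relations, and isolating the elliptic fibre requires a careful algebraic reduction case-by-case over the finite list of profiles $\mathbf{d}$. Once $E$ is identified, Hasse's theorem gives $|E(\Fp)|=p+1-a_p$ with $a_p$ not a polynomial function of $p$, so the coefficient of $p^{-s\cdot k^{\ast}}$ in $\zeta_{\mff_{6,2}(\Fp)}^{\idealgr}(s)$, where $k^{\ast}=\sum_i(\dim L_i - d_i^{\ast})$, inherits an unremovable Frobenius-trace term. To close the argument I would verify by direct inspection of the other strata of the same codimension $k^{\ast}$ that no cancelling $-a_p$ contribution arises elsewhere; this is automatic once one checks that the elliptic curve $E$, identified by its $j$-invariant, appears in only one stratum. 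This yields the required non-$\Fp$-uniformity of $\zeta_{\mff_{6,2}}^{\idealgr}(s)$ and proves Theorem \ref{thm:fin.uniform}.
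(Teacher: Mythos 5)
Your overall stratification by dimension profile and the reduction of the graded-ideal condition to the bracketing $[L_1,I_i]\subseteq I_{i+1}$ for $i\leq 5$ are both correct and match the paper's framework. However, there are two serious errors that would derail this proof.

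First, you dismiss the $d_1=0$ stratum as reducing ``to graded ideal enumeration in $\bigoplus_{i\geq 2}L_i$, which falls under the polynomial regime of lower class.'' This is false: setting $I_1=0$ does not turn the problem into one about $\mff_{5,2}$ or any free Lie algebra of lower class. The graded pieces $L_2,\dots,L_6$ of $\mff_{6,2}$ have dimensions $1,2,3,6,9$ and retain the $\phi_1,\phi_2$-structure coming from the ambient two generators, which is quite different from the graded pieces $2,1,2,3,6$ of $\mff_{5,2}$. In fact, the paper's entire non-uniformity comes from a stratum with $d_1=d_2=d_3=0$, namely $\mathbf{d}=(0,0,0,1,5,8)$: the constraint is that $\phi(I_5)\leq L_6$ have rank $8$, and analysing this rank condition over the choices of a line $I_4\subseteq L_4$ and a $5$-plane $I_5\supseteq\phi(I_4)$ is exactly where the problem becomes arithmetically subtle. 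By throwing away $d_1\in\{0,2\}$ you would never reach the relevant stratum.

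Second, you pin your hopes on finding an elliptic curve and invoking Hasse's bound, which would give genuinely non-PORC behaviour. That is overkill and is not what happens here: the paper's computation of $a_{p^9}^{\idealgr}(L)$ shows the coefficient is given by \emph{two} polynomials in $p$, one for $p\equiv 1,7\pmod 8$ and one for $p\equiv 3,5\pmod 8$. The deviation from a single polynomial arises because solving the linear-algebra rank conditions reduces at one point to a quadratic $c_6=\frac{-2a_2^2+a_2\pm a_2\sqrt{2a_2^2-2a_2+1}}{a_2-1}$, so the fibre count depends on whether $2a_2^2-2a_2+1$ is a square in $\Fp$; the total over $a_2\in\Fp$ is governed by the splitting behaviour of a conic and hence by $p\bmod 8$, not by the trace of Frobenius on an elliptic curve. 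Your proposed approach would either fail to detect this PORC-type obstruction or, if followed literally, would send you hunting for a positive-genus curve that is not present in the relevant stratum. Finally, even as written the proposal is a research plan (``I would locate\dots''), not a proof: no explicit curve, stratum, or computation is produced, and the anti-cancellation step across strata of the same codimension is asserted rather than carried out.
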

Note that for $d=2$, as stated earlier, currently the $\Zp$-uniformity is only known for 
\begin{itemize}
	\item $\zeta_{\mff_{2,2}}^{\ideal}(s)$ (\cite{GSS/88}) and $\zeta_{\mff_{3,2}}^{\ideal}(s)$ (\cite{Woodward/05}), for ungraded setting, and
	\item $\zeta_{\mff_{2,2}}^{\idealgr}(s),\,\zeta_{\mff_{3,2}}^{\idealgr}(s)$, and $\zeta_{\mff_{4,2}}^{\idealgr}(s)$ (\cite{LeeVoll/18}), for graded setting.
\end{itemize}
Recently, the second author also showed in \cite[Theorem 3.7]{Lee/20arxiv1} that  $\zeta_{\mff_{2,2}}^{\ideal}(s)$,  $\zeta_{\mff_{3,2}}^{\ideal}(s)$, and  $\zeta_{\mff_{4,2}}^{\ideal}(s)$ are $\Fp$-uniform.  Theorem \ref{thm:uniform} further extends our database, and suggests that $\zeta_{\mff_{5,2}}^{\idealgr}(s)$, $\zeta_{\mff_{4,2}}^{\ideal}(s)$, and $\zeta_{\mff_{5,2}}^{\ideal}(s)$ may also be $\Zp$-uniform. 

More interestingly, Theorem \ref{thm:fin.uniform} is the first example of zeta functions regarding $\mff_{c,d}$ that turns out to be not uniform. Although we cannot compute $\zeta_{\mff_{6,2}}^{\idealgr}(s)$ or $\zeta_{\mff_{6,2}}^{\ideal}(s)$ yet, we believe it would be really surprising if any of them turns out to be $\Zp$-uniform.

Furthermore, in this article we also introduce and study the following zeta function counting some special graded ideals, namely the \textit{$n$-step graded ideals}. 
\begin{dfn}\label{def:nsg}
	Let $L$ be a nilpotent Lie algebra of class $c$. For $n\leq c$ and $k\in[c-n]$, an ideal $I\triangleleft L$ is said to be an $n$-step graded ideal, if it is of the form  \[I=\underbrace{0\oplus\cdots\oplus0}_{k-1}\oplus \underbrace{I_{k}\oplus I_{k+1}\oplus\cdots\oplus I_{k+n}}_{n+1}\oplus L_{k+n+1}\oplus\cdots\oplus L_{c},\] where $I_{k}\leq L_{k},I_{k+1}\leq L_{k+1},\ldots, I_{k+n}\leq L_{k+n}$. In this case we write $I\idealkn L$.

	Define the $n$-step graded ideal zeta function of L to be 
	\[\zeta_{L}^{\idealn}(s):=\sum_{k=1}^{c-n}\sum_{I\idealkn L}|L_{k}:I_{k}|^{-s}\cdots|L_{k+n}:I_{k+n}|^{-s}.\]
\end{dfn}
In Section \ref{sec:nsg}, we prove the following result:
\begin{thm}\label{thm:1step.uniform}
	The one-step graded ideal zeta function $\zeta_{\mff_{c,2}(\Fp)}^{\ideal_1}(s)$ of $\mff_{c,2}(\Fp)$ is always uniformly given by a polynomial in $p$ for all $c\in\N$.
\end{thm}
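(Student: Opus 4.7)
The plan is to decompose $\zeta_{\mff_{c,2}(\Fp)}^{\ideal_1}(s)$ according to the index $k$ of the bottom non-trivial graded piece of the one-step graded ideal and to show that each contribution is a polynomial in $p$ and $p^{-s}$. Write
\[
\zeta_{\mff_{c,2}(\Fp)}^{\ideal_1}(s)=\sum_{k=1}^{c-1}\zeta_k(s),
\]
where $\zeta_k(s)$ enumerates pairs $(I_k,I_{k+1})$ with $I_k\le L_k$, $I_{k+1}\le L_{k+1}$, $[L_1,I_k]\subseteq I_{k+1}$, weighted by $|L_k:I_k|^{-s}|L_{k+1}:I_{k+1}|^{-s}$. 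Set $d_j:=\dim_{\Fp}L_j$; fixing $I_k$ and writing $m:=\dim[L_1,I_k]$, the admissible $I_{k+1}$ correspond to subspaces of the $(d_{k+1}-m)$-dimensional quotient $L_{k+1}/[L_1,I_k]$, so the inner sum becomes
\[
\sum_{I_{k+1}\supseteq[L_1,I_k]}|L_{k+1}:I_{k+1}|^{-s}=\sum_{j=0}^{d_{k+1}-m}\binom{d_{k+1}-m}{j}_{\!p}\,p^{-sj},
\]
a polynomial in $p$ and $p^{-s}$ depending only on $m$. The theorem therefore reduces to showing that the counts
\[
N_k(a,m):=\bigl|\{I\le L_k : \dim I=a,\ \dim[L_1,I]=m\}\bigr|
\]
are polynomial in $p$ for every admissible $k,a,m$.

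For $k=1$ this is immediate: $[L_1,v]=L_2$ for every $0\ne v\in L_1$ (as $L_2$ is one-dimensional), so $N_1(0,0)=1$ and $N_1(a,1)=\binom{2}{a}_p$ for $a\ge 1$. For $k\ge 2$ I would exploit the embedding of the free Lie algebra into the tensor algebra $T(L_1)$ to verify that $\ad(x)\colon L_k\to L_{k+1}$ is injective for every $0\ne x\in L_1$: its kernel in $T(L_1)$ is the polynomial subalgebra $\Fp[x]$, which meets $L_k$ trivially for $k\ge 2$. Defining $\Psi\colon L_1\otimes L_k\to L_{k+1}$ by $\Psi(x\otimes v)=[x,v]$ and $R_k:=\ker\Psi$, one then has
\[
\dim[L_1,I]=2a-\dim\bigl(R_k\cap(L_1\otimes I)\bigr),
\]
with $\dim R_k=2d_k-d_{k+1}$ by Witt's formula for $\mff_{c,2}$.

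For even $k\ge 2$, Witt's formula gives $2d_k=d_{k+1}$, so $R_k=0$ and $\dim[L_1,I]=2a$ for every $I\le L_k$; hence $N_k(a,2a)=\binom{d_k}{a}_p$ and the contribution $\zeta_k(s)$ is manifestly polynomial. For odd $k\ge 3$, $R_k\ne 0$, and the strategy is to stratify the Grassmannian $\Gr(a,L_k)$ by the function $I\mapsto\dim(R_k\cap(L_1\otimes I))$. Each stratum is $\GL_2(\Fp)$-invariant (since $\Psi$ is $\GL_2$-equivariant under the natural action on $L_1$); $\GL_2(\Fp)$-orbit lengths of the form $|\GL_2(\Fp)|/|\Stab|$ are polynomial in $p$, and combined with a uniform description of orbit representatives this yields the desired polynomiality of $N_k(a,m)$.

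The hard part will be verifying the polynomiality of the rank counts for odd $k\ge 3$. The concrete task is to describe $R_k$ in an explicit Lyndon/Hall basis of $L_k$ using the Jacobi relations and to show that the intersection pattern $\dim(R_k\cap(L_1\otimes I))$ stratifies $\Gr(a,L_k)$ into constructible subvarieties of polynomial $\Fp$-point count. A possibly cleaner route is to swap the order of summation and parameterise by $I_{k+1}\le L_{k+1}$ first: the admissible $I_k$ then range over all subspaces of $K(I_{k+1}):=\{v\in L_k:[L_1,v]\subseteq I_{k+1}\}$ and contribute a Gaussian polynomial depending only on $\dim K(I_{k+1})$, so that polynomiality reduces to the joint distribution of $(\dim I_{k+1},\dim K(I_{k+1}))$ being polynomial in $p$. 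In either formulation the argument ultimately rests on the special structure of the free Lie algebra on $2$ generators, in particular the injectivity of $\ad(x)$ for all $0\ne x\in L_1$ in degrees $\ge 2$.
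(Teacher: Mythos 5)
Your reduction to the polynomiality of $N_k(a,m)$ (via the observation that the sum over $I_{k+1}\supseteq\phi(I_k)$ is a Gaussian polynomial depending only on $m=\dim\phi(I_k)$) is the same as the paper's, and your identity $\dim[L_1,I]=2a-\dim(R_k\cap(L_1\otimes I))$ with $\dim R_k=2d_k-d_{k+1}$ is correct; it matches the paper's dimension of collapse and $l_{k+1}=\dim\Lambda_{k+1}$. However, the central claim your argument rests on is false: it is \emph{not} true that $2d_k=d_{k+1}$ for every even $k\geq 2$. This holds by accident for $k=2,4,6$, but already $2d_8=60>56=d_9$, and $2d_k-d_{k+1}>0$ for every $k\geq 8$ regardless of parity. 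So your ``clean'' even case has no valid foundation and the even/odd dichotomy you set up does not exist. Your odd-$k$ sketch --- stratify $\Gr(a,L_k)$ by $\dim(R_k\cap(L_1\otimes I))$ and appeal to $\GL_2(\Fp)$-equivariance plus orbit--stabilizer --- is not carried out and would not deliver polynomiality on its own: orbit sizes are polynomial in $p$, but the number of orbits in a stratum and a $p$-uniform choice of representatives need not be, and indeed the paper's own $c=6$ computation (Theorem~\ref{thm:c=6}) shows that closely related counts of this kind do fail to be polynomial in $p$. You have deferred precisely the step that has to be proved.

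The paper's actual proof treats all $k$ at once with no parity split and no orbit analysis. Setting $\Lambda_{k+1}:=\phi_1(L_k)\cap\phi_2(L_k)$, $W_{k,1}:=\phi_1^{-1}(\Lambda_{k+1})$, $W_{k,2}:=\phi_2^{-1}(\Lambda_{k+1})$, the crucial structural input is Theorem~\ref{thm1} (that $\chi$ vanishes on elements fixed by both $\sigma_1$ and $\sigma_2$), which gives $W_{k,1}\cap W_{k,2}=\{0\}$. Then for $U_1:=I_k\cap W_{k,1}$, the dimension of collapse equals $\dim(I_k\cap U_2)$ with $U_2:=\phi_2^{-1}\phi_1(U_1)$, and the number of $m$-dimensional $I_k$ with prescribed intersection dimensions with the two disjoint fixed subspaces $W_{k,1}$ and $U_2$ is given by Proposition~\ref{pro:subspace.intersection}, which is manifestly a polynomial in $p$. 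Summing over the finitely many choices of $j=\dim U_1$ (Theorem~\ref{thm:dim.col2.poly}) and then over $k$ (Theorem~\ref{thm:2step.graded} and Corollary~\ref{thm:2sg.zeta}) yields the result. To repair your proof you would need to replace both the even-$k$ claim and the $\GL_2$-orbit heuristic with an explicit count along these lines.
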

Note that this is a powerful result in the sense that it does not have any condition on nilpotency class $c$.

This article is arranged as follows: In Section \ref{sec:prelim}, we discuss some preliminary results from the study of vector spaces over $\Fp$ that allow us to prove our main theorems later. In Section \ref{sec:fc2} we study the free Lie algebras on 2 generators in some generality -- there is a large body of theoretical work concerning
free Lie algebras which we briefly recall. We go on to establish the setting
in which we have found it most useful to work. This seems to be a new way of
looking at these algebras and we are able to discover some new structural
results that are useful in the work that follows. In Section \ref{sec:cleq5} we use the arguments developed in previous sections to prove the $\Fp$-uniformity result on free class-$c$ Lie algebras on 2 generators for $c\leq5$. In Section \ref{sec:nsg} we then prove some $\Fp$-uniformity result on a general one-step graded ideals of $\mff_{c,2}$ and on some special two-step graded ideals. In Section \ref{sec:c=6} we prove another main result in this article, that $\zeta_{\mff_{6,2}(\Fp)}^{\idealgr}(s)$ is not $\Fp$-uniform. In Section \ref{sec:further} we end this article with some questions.

\subsection{Notation}

Given $n\in\N=\{1,2,\dots\}$, we write $[n]$ for $\{1,2,\dots,n\}$. Given a subset
$I\subseteq\N$, we write $I_{0}$ for $I\cup\{0\}$. For the Gaussian binomial coefficient $\binom{n}{i}_{p}$ where $n,i\in\Z$, we use the convention that
\[\binom{n}{i}_p=\begin{cases}
	\frac{(1-p^n)(1-p^{n-1})\cdots(1-p^{n-i+1})}{(1-p)(1-p^2)\cdots(1-p^i)}&0<i\leq n,\\
	1&\textrm{ if } i=0,\\
	0&\textrm{ if } i<0\textrm{ or }i>n.
\end{cases}\]

By a class of Lie ring we always mean the nilpotent class. For a prime $p$, we write $t=p^{-s}$, where $s$ is a complex variable.

\section{Vector spaces, Grassmannians, and matrix spaces over $\Fp$}\label{sec:prelim}

Let $K$ be a field. The \textit{Grassmannian} $\Gr_{K}(n,m)$ is the set of all $m$-dimensional subspaces of $K^{n}$. Every element of $\Gr_{K}(n,m)$ can be described as a span of some $m$ independent row vectors of length $n$, which we can arrange in an $m\times n$ matrix. 

\begin{exm}
	Let $K$ be a field and $V=\langle e_1,\ldots,e_5\rangle$ is an $5$-dimensional vector space over $K$. Let $W$ be a subspace of $V$ generated by $a_1e_1+a_3e_3,b_2e_2,c_4e_4+c_5e_5$, then one can represent $W$ in $V$ by 
	\begin{align*}
		\begin{pmatrix}
			a_1 & 0 & a_3 & 0 & 0\\
			0 & b_2 & 0 & 0 & 0 \\
			0 & 0 & 0 & c_4 & c_5 \\
		\end{pmatrix}.
	\end{align*}
\end{exm}

In fact, we can perform elementary row operations on the matrix without
changing the element of the Grassmannian it represents. Hence each subspace can be represented by a unique full-rank $m\times n$ matrix in Reduced Row Echelon Form (RREF). The subset of the Grassmannian where its elements have a particular RREF constitutes a \textit{Schubert Cell}, and $\Gr_{K}(n,m)$ is a disjoint union of Schubert Cells. In this paper we also frequently represent our spaces as row spaces of matrices, sometimes in RREF. By abusing notation, if $W$ is a subspace of $V$ and $M=M_W$ is the corresponding matrix whose row space gives $W$, then we will write $W=M$.

Since we count subspaces of a vector space of finite dimensions over $\Fp$ satisfying certain conditions, let us now concentrate on the case where $K=\Fp$. Here we record some  uniformity results on the number of subspaces over $\Fp$ that will be used later in this article. Let $V$ be a vector space of dimension $d$ over $\Fp$.

\begin{fact}\label{fact:poly}
	\begin{enumerate}
		\item The number of $m$-dimensional subspaces $U$ of $V$ is the Gaussian binomial coefficients
		\[\binom{d}{m}_{p}\]
		\item Let $W$ be a subspace of $V$ of dimension $l$. The number of $m$-dimensional subspaces $U$ of  $V$ that contain $W$ is
		\[\binom{d-l}{m-l}_p.\]
		\item Let $W$ be a subspace of $V$ of dimension $l$. The number of $m$-dimensional subspaces $U$ of $V$ that are disjoint to $W$ is
		\[p^{lm}\binom{d-l}{m}_{p}.\]
		\item Let $W$ be a subspace of $V$ of dimension $l$. The number of $m$-dimensional subspaces $U$ of  $V$ that intersect $W$ in
		dimension $s$ is
		\[p^{(l-s)(m-s)}\binom{d-l}{m-s}_p\binom{l}{s}_p.\]
	\end{enumerate}
	In particular, they are all uniformly given by a polynomial in $p$.
\end{fact}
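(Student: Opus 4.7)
The plan is to derive all four counts by the standard ``partial frame'' argument: count the ordered tuples $(v_1,\dots,v_m)$ of vectors in $V$ satisfying the relevant linear-independence condition, and divide by $\prod_{i=0}^{m-1}(p^m-p^i)$, the number of ordered bases of a fixed $m$-dimensional $\Fp$-subspace. The polynomiality in $p$ of every resulting quantity will then follow from the classical fact that each Gaussian binomial $\binom{n}{k}_p$ is a polynomial in $p$ with non-negative integer coefficients --- for instance via the Schubert-cell decomposition of $\Gr_{\Fp}(n,k)$ recalled just above the statement, or via the identity $\binom{n}{k}_p=\sum_{\lambda\subseteq(n-k)^{k}}p^{|\lambda|}$.

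Part (1) is then the familiar counting of $m$-dimensional subspaces of $V$: the numerator is $\prod_{i=0}^{m-1}(p^d-p^i)$ and the ratio simplifies to $\binom{d}{m}_p$. Part (2) is obtained by passing to the quotient $V/W$ (of dimension $d-l$) and applying (1) to its $(m-l)$-dimensional subspaces, using the bijection $U\mapsto U/W$ between $m$-dimensional $U\supseteq W$ and $(m-l)$-dimensional subspaces of $V/W$. For part (3), when building an ordered basis of a subspace disjoint from $W$, the $i$-th vector must lie outside $W+\langle v_1,\dots,v_{i-1}\rangle$, a subspace of dimension $l+i-1$, which leaves $p^d-p^{l+i-1}$ choices; extracting a factor $p^l$ from each of the $m$ numerator terms, the ratio rearranges to $p^{lm}\binom{d-l}{m}_p$. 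For part (4), I would condition on the intersection $U_0:=U\cap W$: first choose an $s$-dimensional $U_0\subseteq W$ in $\binom{l}{s}_p$ ways by (1), then count $m$-dimensional $U\supseteq U_0$ with $U\cap W=U_0$; passing to $V/U_0$, such $U$ correspond bijectively to $(m-s)$-dimensional subspaces of $V/U_0$ (dimension $d-s$) disjoint from $W/U_0$ (dimension $l-s$), which by (3) equals $p^{(l-s)(m-s)}\binom{d-l}{m-s}_p$. Multiplying the two factors yields the stated formula.

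No genuine obstacle arises; these are classical identities in the theory of $q$-analogs, included here only to fix notation and conventions for use in later sections. The only care needed is in degenerate regimes ($s=0$, $s=l$, $l=0$, $m=s$, etc.), where one must verify that the conventions fixed in the Notation subsection ($\binom{n}{k}_p=0$ for $k<0$ or $k>n$, and $\binom{n}{0}_p=1$) reproduce the correct trivial counts; in each case this is immediate from inspection.
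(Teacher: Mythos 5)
Your derivation is correct, and all four identities are indeed standard $q$-analog counting results. Note, however, that the paper itself offers no proof of this statement: it is recorded as a \emph{Facts} environment (theorem style \texttt{remark}) with no accompanying proof, precisely because these are classical. Your partial-frame argument for (1) and (3), the quotient bijection $U\mapsto U/W$ for (2), and the conditioning on $U\cap W$ followed by reduction modulo $U_0$ for (4) are the usual derivations, and the simplification $\prod_{i=0}^{m-1}(p^d-p^{l+i})=p^{lm}\prod_{i=0}^{m-1}(p^{d-l}-p^i)$ correctly extracts the factor $p^{lm}$. The remark about degenerate parameter regimes being handled by the binomial conventions fixed in the Notation subsection is also accurate, so there is nothing to compare against and nothing to correct.
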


\begin{pro}\label{pro:subspace.intersection}
Let $W_1$ and $W_2$ be subspaces of dimension $l_1$ and $l_2$ respectively in $V$, such that $W_1\cap W_2=0$. Let $g(d,l_1,l_2,n_1,n_2,m)$ denote the number of $m$-dimensional subspaces $U$ of $V$ that intersect $W_1$ in dimension $n_1$ and $W_2$ in dimension $n_2$. We have
\begin{align*}
	g(d,l_1,l_2,n_1,n_2,m)&=\binom{l_1}{n_1}_p\binom{l_2}{n_2}_p\sum_{r=0}^{\min\{l_1-n_1,l_2-n_2,m-n_1-n_2\}}\left(p^{(l_1-n_1+l_2-n_2-r)(m-n_1-n_2-r)}\right.\\
	&\left.\cdot\binom{l_1-n_1}{r}_p\binom{l_2-n_2}{r}_p\binom{d-l_1-l_2}{m-n_1-n_2}_p\prod_{e=0}^{r-1}(p^r-p^e)\right)
\end{align*}
\end{pro}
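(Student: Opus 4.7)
The plan is to condition first on the two intersections $A := U \cap W_1$ and $B := U \cap W_2$, and then to stratify the count over the intermediate intersection $T := U \cap (W_1 + W_2)$. Since $W_1 \cap W_2 = 0$, the choice of $A$ (an $n_1$-dim subspace of $W_1$) and $B$ (an $n_2$-dim subspace of $W_2$) are independent and contribute the outer factor $\binom{l_1}{n_1}_p \binom{l_2}{n_2}_p$. Every admissible $U$ then contains the $(n_1+n_2)$-dim direct sum $A \oplus B$, and the problem transports to a pair of transversality conditions in the quotient $V/(A \oplus B)$ of dimension $d - n_1 - n_2$.

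Introducing $r := \dim T - (n_1 + n_2) \geq 0$, I would count, for fixed $A, B, r$, the admissible subspaces $T \supseteq A \oplus B$ of dimension $n_1+n_2+r$ with $T \cap W_1 = A$ and $T \cap W_2 = B$. Passing to $(W_1 + W_2)/(A \oplus B) \cong (W_1/A) \oplus (W_2/B)$, the image $\bar T$ is an $r$-dim subspace meeting each direct summand trivially. The two coordinate projections (with kernels the opposite summand) are therefore injective on $\bar T$, so $\bar T$ is precisely the graph of a linear isomorphism $\phi : A_1 \xrightarrow{\sim} A_2$ between an $r$-dim subspace $A_1 \subseteq W_1/A$ and an $r$-dim subspace $A_2 \subseteq W_2/B$; conversely, every such graph yields a valid $\bar T$. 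Counting the triples $(A_1, A_2, \phi)$ gives
\begin{equation*}
\binom{l_1-n_1}{r}_p \binom{l_2-n_2}{r}_p \lvert \GL_r(\Fp)\rvert = \binom{l_1-n_1}{r}_p \binom{l_2-n_2}{r}_p \prod_{e=0}^{r-1}(p^r - p^e),
\end{equation*}
which is the source of the $\prod_{e=0}^{r-1}(p^r-p^e)$ factor in the claim.

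For each admissible $T$ of dimension $n_1 + n_2 + r$, I would count $m$-dim subspaces $U \supseteq T$ satisfying $U \cap (W_1 + W_2) = T$ by passing to the quotient $V/T$. These correspond bijectively to $(m - n_1 - n_2 - r)$-dim subspaces of $V/T$ (dimension $d - n_1 - n_2 - r$) disjoint from $(W_1 + W_2)/T$ (dimension $l_1 + l_2 - n_1 - n_2 - r$), and Fact~\ref{fact:poly}(3) immediately yields the factor $p^{(l_1 - n_1 + l_2 - n_2 - r)(m-n_1-n_2-r)} \binom{d-l_1-l_2}{m-n_1-n_2-r}_p$. Multiplying the three factors and summing over $r$ (whose range is forced by the nonnegativity of $l_1-n_1-r$, $l_2-n_2-r$, and $m-n_1-n_2-r$) produces the formula. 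The main step requiring care is the graph parameterization of transversal subspaces — everything else is a clean application of the quotient trick together with Fact~\ref{fact:poly}; I would also double-check, in the final binomial, that the exponent $m-n_1-n_2-r$ is indeed the codimension arising from the quotient by $T$ (as the formula as stated appears to omit the $-r$).
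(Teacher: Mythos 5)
Your proof is correct and follows essentially the same approach as the paper's: condition on $A=U\cap W_1$ and $B=U\cap W_2$, quotient by $A\oplus B$ to reduce to the transversal case, stratify over $r=\dim(U\cap(W_1+W_2))-(n_1+n_2)$, and then extend transversally. Your graph parameterization of the admissible $T$'s as graphs of isomorphisms $\phi:A_1\xrightarrow{\sim}A_2$ is the same count the paper does by fixing an ordered basis $\{w_{1,i}\}$ of $X_1$ and letting the ordered basis $\{w_{2,i}\}$ of $X_2$ vary; both yield $\lvert\GL_r(\Fp)\rvert=\prod_{e=0}^{r-1}(p^r-p^e)$.

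Your concern about the last binomial is well-founded: the formula as printed (both in the proposition and at the end of the paper's proof) has $\binom{d-l_1-l_2}{m-n_1-n_2}_p$, but it should be $\binom{d-l_1-l_2}{m-n_1-n_2-r}_p$. After quotienting by $T$ (of overall dimension $n_1+n_2+r$), the ambient space has dimension $d-n_1-n_2-r$, the image of $W_1+W_2$ has dimension $l_1-n_1+l_2-n_2-r$, and the image of $U$ has dimension $m-n_1-n_2-r$; Fact~\ref{fact:poly}(3) then gives $p^{(l_1-n_1+l_2-n_2-r)(m-n_1-n_2-r)}\binom{d-l_1-l_2}{m-n_1-n_2-r}_p$. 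A sanity check with $d=2$, $l_1=l_2=1$, $n_1=n_2=0$, $m=1$ (lines in a plane avoiding two fixed distinct lines, answer $p-1$) confirms the correction: the printed formula evaluates to $0$ because $\binom{0}{1}_p=0$ in every term, while the corrected formula returns $p-1$ from the $r=1$ term.
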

\begin{proof}
First, let $U\cap W_1=U_1$ and $U\cap W_2=U_2$. We know there are $\binom{l_1}{n_1}_p$ possibilities for $U_1$ and $\binom{l_2}{n_2}_p$ possibilities for $U_2$. Once we fix them, we can work in the quotient space $V/(U_1+U_2)$. Hence we can reduce to the case when $U_1=U_2=0$.

So assume $U_1=U_2=0$, and consider $U\cap(W_1+W_2)$. Suppose this has dimension $r$ with basis of the form $\{w_{1,i}+w_{2,i}\}_{i\in[r]}$, where $u_{1,i}\in W_1$ and $w_{2,i}\in W_2$ for $i\in[r]$. Since $U_1=U_2=0$,  $\{w_{1,1},\ldots,w_{1,r}\}$ is a basis for a subspace $X_1\leq W_1$ and $\{w_{2,1},\ldots,w_{2,r}\}$ is a basis for a subspace $X_2\leq W_2$. Note that for each fixed $r$, where $0\leq r \leq \min\{l_1-n_1,l_2-n_2,m-n_1-n_2\}$, there are  $\binom{l_1-n_1}{r}_p\binom{l_2-n_2}{r}_p$ possible choices for $r$-dimensional subspaces $X_1$ and $X_2$. 

Having chosen $X_1$ and $X_2$, pick a fixed basis $w_{1,i}$'s for $X_1$. Then we can choose any basis $w_{2,i}$'s of $X_2$, and each choice would generate different $r$-dimensional subspace $U\cap(W_1+W_2)$. Hence for a fixed $X_1$ and $X_2$ there are $\prod_{e=0}^{r-1}(p^r-p^e)$ possibilities for $U\cap(W_1+W_2)$.

Now, similar to Fact \ref{fact:poly} (3), we know that to extend a basis for $U\cap W_1+W_2$ to a basis for $U$ we have additional $p^{(l_1-n_1+l_2-n_2-l)(m-n_1-n_2-r)}\binom{d-l_1-l_2}{m-n_1-n_2}_p$ possibilities. Hence we get the final formula
\begin{align*}
	g(d,l_1,l_2,n_1,n_2,m)&=\binom{l_1}{n_1}_p\binom{l_2}{n_2}_p\sum_{r=0}^{\min\{l_1-n_1,l_2-n_2,m-n_1-n_2\}}\left(p^{(l_1-n_1+l_2-n_2-r)(m-n_1-n_2-r)}\right.\\
	&\left.\cdot\binom{l_1-n_1}{r}_p\binom{l_2-n_2}{r}_p\binom{d-l_1-l_2}{m-n_1-n_2}_p\prod_{e=0}^{r-1}(p^r-p^e)\right)
\end{align*}
as required.
\end{proof}
\section{Free Lie algebras on 2 generators}\label{sec:fc2}
Let $X=\{X_1,X_2\}$ be an alphabet on letters $X_1$ and $X_2$. Let $X^*$ denote the set of all words on $X$, and let $1$ denote the empty word in $X^*$. Given any word \[w=X_{i_{1}}^{n_{i_{1}}}\cdots X_{i_{l}}^{n_{i_{l}}}\in X^*,\] we write $|w|=\sum_{j=1}^{l}n_{i_{j}}$ and $X_{n}^*:=\{w\in X^*\mid |w|=n\}$. For $i\in[2]$ we also write $|w|_{i}$ to be the total number of occurrences of $X_{i}$ in the word $w$.  

Let $R$ denote a commutative ring with a unit element. Let $R\langle X\rangle$ denote a free $R$-module on $X^*$, and $R\langle X\rangle_{n}$ denote a free $R$-module on $X_{n}^*$. Elements of $R\langle X\rangle$ are of the form $a=\sum_{w\in X^*}\alpha_{w}\cdot w$, and elements of $R\langle X\rangle_{n}$ are of the form $a=\sum_{w\in X_{n}^*}\alpha_{w}\cdot w$, 
where $\alpha_{w}\in R$.

For any polynomials $P,Q$ in $R\langle X\rangle$, define their Lie bracket by 
\[[P,Q]=PQ-QP.\]
This defines a structure of Lie algebra on $R\langle X \rangle$. 

Let $\mcL_R(X)$ (or simply $\mcL(X)$) denote the smallest Lie subalgebra of $R\langle X\rangle$ that contains each letter in $X$. A Lie polynomial is an element of $\mcL_R(X)$. It is a well-known result (e.g., \cite[Theorem 5]{Reutenauer03}) that in fact $\mcL_R(X)$ is the free Lie algebra on $X$ over $R$. For instance, $\mcL_\Z(X)$ is the free Lie ring on 2 generators.

Let us also define a map (sometimes called the `Lie bracketing from right to left')
\[\chi:R\langle X\rangle \rightarrow \mcL_R(X)\]
by $\chi(1)=0$ and 
\[X_{i_{1}}\cdots X_{i_{n}}\mapsto [X_{i_{l}},[\cdots,[X_{i_{n-1}},X_{i_{n}}]\cdots]].\] 

To simplify the notation we will write $L$ for the free Lie object on $2$ generators, regardless of the field/ring we are working over. Define $L_{k}$ to be the subring of $L$ generated by Lie polynomials of weight $k$; this is the $k$-th term of the lower central series of $L$. We have
\[L=L_{1}\oplus L_{2}\oplus\cdots,\]
and in fact $\mff_{c,2}=L/L_{c+1}$. Also note that for $a\in L_{k}$, $\chi(a)=ka$. The number of generators $d_{k}$ of $L_{k}$ is given by the Witt function
\[d_{k}=\frac{1}{k}\sum_{j\mid k}\mu(j)2^{k/j},\]
where $\mu$ is the M\"{o}bius function.

For $i\in[2]$, let us define
\begin{align*}
	\phi_{i}:&L\rightarrow L\\
	&w\mapsto [X_{i},w]:=X_{i}\cdot w- w\cdot X_{i}
\end{align*}
to be a commutation with the generator $X_{i}$, and for $k\in[c-1]$ let 
\[\phi_{i,k}:=\left.\phi_{i}\right|_{L_{k}}:L_{k}\rightarrow L_{k+1}.\] 
We can of course define these maps on the full module $R\langle X \rangle$, and by abusing notation we will use the same notation to denote the map $w\mapsto X_{i}\cdot w- w\cdot X_{i}$ in both contexts.

From the freeness of $L$ we have the following simple injectivity of $\phi_{i}$.

\begin{lem}\label{lem:kernel} With regard to the maps $\phi_i:R \langle X \rangle \rightarrow R \langle X \rangle$ we have the following descriptions of their kernels: 
	\[
	\ker(\phi_i) = \langle 1,X_i,X_i^2,\ldots \rangle_R.
	\]
\end{lem}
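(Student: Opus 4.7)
The plan is to refine the length grading on $R\langle X\rangle$ by also recording the number of occurrences of the ``other'' generator $X_j$ (where $\{i,j\}=\{1,2\}$), and to show that $\phi_i$ is triangular, hence injective, on each bigraded piece that contains at least one $X_j$. Concretely, I would first write $R\langle X\rangle=\bigoplus_{n\geq 0}\bigoplus_{m=0}^{n} W_{n,m}$, where $W_{n,m}$ is the free $R$-module spanned by the words of length $n$ containing exactly $m$ copies of $X_j$. Since both $w\mapsto X_i w$ and $w\mapsto w X_i$ preserve the number of occurrences of $X_j$ and increase the length by one, we have $\phi_i(W_{n,m})\subseteq W_{n+1,m}$, so it suffices to compute $\ker(\phi_i|_{W_{n,m}})$ for each $(n,m)$. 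When $m=0$ the piece $W_{n,0}=R\cdot X_i^n$ is visibly killed by $\phi_i$, which already gives the inclusion $\langle 1,X_i,X_i^2,\dots\rangle_R\subseteq \ker\phi_i$.

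The main step is the reverse inclusion, i.e., injectivity of $\phi_i|_{W_{n,m}}$ for $m\geq 1$. Every word in $W_{n,m}$ has a unique canonical form $X_i^{a_0}X_jX_i^{a_1}X_j\cdots X_jX_i^{a_m}$ parametrised by tuples $\vec a=(a_0,\dots,a_m)$ of non-negative integers with $\sum_k a_k=n-m$, and a direct calculation yields
\[
\phi_i\bigl(X_i^{a_0}X_jX_i^{a_1}\cdots X_jX_i^{a_m}\bigr)=X_i^{a_0+1}X_jX_i^{a_1}\cdots X_jX_i^{a_m}-X_i^{a_0}X_jX_i^{a_1}\cdots X_jX_i^{a_m+1}.
\]
I would then order the tuples indexing the basis of $W_{n+1,m}$ lexicographically with $a_0$ most significant. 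For any nonzero $a=\sum_{\vec a}\alpha_{\vec a}\,X_i^{a_0}X_j\cdots X_jX_i^{a_m}\in W_{n,m}$, let $\vec b=(b_0,\dots,b_m)$ be the lex-maximal index with $\alpha_{\vec b}\neq 0$. The coefficient of the basis element indexed by $(b_0+1,b_1,\dots,b_m)$ in $\phi_i(a)$ receives exactly two possible contributions: $+\alpha_{\vec b}$ from the ``$a_0\mapsto a_0+1$'' summand at $\vec a=\vec b$, and $-\alpha_{(b_0+1,b_1,\dots,b_{m-1},b_m-1)}$ from the ``$a_m\mapsto a_m+1$'' summand. The second index is strictly lex-larger than $\vec b$ (its first coordinate is $b_0+1>b_0$), so by maximality of $\vec b$ its coefficient vanishes, leaving $\alpha_{\vec b}\neq 0$. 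Hence $\phi_i(a)\neq 0$.

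Combining the two cases, $\ker\phi_i=\bigoplus_{n\geq 0}W_{n,0}=\langle 1,X_i,X_i^2,\dots\rangle_R$, as claimed. The whole argument is essentially mechanical, and the only point requiring some care is the boundary case $b_m=0$ in the triangularity step: there the would-be interfering tuple $(b_0+1,b_1,\dots,b_{m-1},-1)$ is not a valid index and contributes nothing, so the coefficient of $(b_0+1,b_1,\dots,b_m)$ in $\phi_i(a)$ is simply $\alpha_{\vec b}\neq 0$ and the conclusion is unaffected. No substantial obstacle is anticipated, as is to be expected given that the lemma really just encodes the freeness of the tensor algebra $R\langle X\rangle$.
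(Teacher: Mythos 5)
Your proof is correct and follows essentially the same idea as the paper's: both arguments single out the word with the longest leading block of $X_i$'s (your most-significant lex coordinate $a_0$ plays the role of the paper's $r_j$) and observe that, after applying $\phi_i$, the resulting leading term cannot be cancelled by any other contribution. Your reduction to the bigraded pieces $W_{n,m}$ and the explicit formula for $\phi_i$ on the canonical words make the triangularity cleaner and more self-contained, but the core cancellation argument coincides with that of the paper.
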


\begin{proof}
	Without loss of generality let $i=1$ and suppose 
	that $a\in \ker(\phi_1).$ Write $a=\sum_{j=1}^n \alpha_jw_j$ with $\alpha_j \in R$ and without loss of generality we may 
	assume that all the $w_j$ are distinct. Then $\phi_1(a)=0$ tells us that 
	\begin{equation}\label{eq1}
		\sum_{j=1}^n \alpha_j(X_1\cdot w_j) = \sum_{j=1}^n\alpha_j (w_j\cdot X_1). 
	\end{equation}
	Suppose that there exists $j$ so that $|w_j|_2 >0.$ Then it is clear from (\ref{eq1}) that $w_j =X_1^{r_j}\cdot X_2 \cdot u_{j} $ for some $r_j >0$ and $u_{j}\in R\langle X \rangle.$ 
	Pick some $w_j$ so that $r_j$ is maximal. Without loss of generality we may assume that $j=1.$ Then the LHS of (\ref{eq1}) 
	contains $\alpha_1X_1\cdot w_1 = \alpha_1X_1^{r_1+1}\cdot X_2\cdot u_{1} $. This term does not cancel as all the $w_j$ are distinct 
	and so it also appears on the RHS of (\ref{eq1}). Thus there exists $l$ so that 
	\[
	w_l\cdot X = X_1^{r_1+1}\cdot X_2 \cdot u_{1} 
	\]
	which gives us that $w_l =X_1^{r_1+1}\cdot X_2\cdot \cdots .$ This contradicts the maximality of the choice of $r_1$ and the claim is proved.
\end{proof} 
\begin{cor}
	Except for $k=1$, where $\phi_{i,1}(X_i)=0$ for all $i$, the maps $\phi_{i,k}$ for $2\geq k \geq c-1$ are injections. 
\end{cor}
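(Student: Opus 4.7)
The plan is to deduce the corollary almost immediately from the preceding lemma, the key observation being that the kernel described there, when intersected with the homogeneous piece $L_k$, is trivial once $k\geq 2$.

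First I would note that $\phi_{i,k}$ is by definition the restriction of $\phi_i\colon R\langle X\rangle\to R\langle X\rangle$ to the submodule $L_k\subset R\langle X\rangle_k$, so
\[
\ker(\phi_{i,k}) \;=\; L_k\,\cap\,\ker(\phi_i) \;=\; L_k\,\cap\,\langle 1,X_i,X_i^2,\dots\rangle_R,
\]
the second equality coming from the lemma. The submodule $\langle 1,X_i,X_i^2,\dots\rangle_R$ is homogeneous with respect to the weight grading on $R\langle X\rangle$, and its degree-$k$ component is the free rank-one submodule $R\cdot X_i^k$. Since $L_k$ is itself the weight-$k$ component of $L$, the intersection reduces to those scalar multiples $\alpha X_i^k$ that happen to lie in $L_k$. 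Thus the corollary is equivalent to showing $X_i^k\notin L_k$ for every $k\geq 2$.

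The cleanest way to establish this is via the abelianization map
\[
\pi\colon R\langle X\rangle\;\longrightarrow\;R[X_1,X_2],
\]
where $R[X_1,X_2]$ is the commutative polynomial ring. For $k\geq 2$ the subspace $L_k$ is spanned by iterated Lie brackets of length $k\geq 2$, and any such bracket maps to $0$ under $\pi$ because commutators vanish in the abelianization. On the other hand $\pi(X_i^k)=X_i^k\neq 0$ in $R[X_1,X_2]$, so $X_i^k\notin L_k$ whenever $k\geq 2$. Combining this with the displayed formula for $\ker(\phi_{i,k})$ yields $\ker(\phi_{i,k})=0$, i.e.\ injectivity, as required. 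The separate treatment of $k=1$ is forced by the fact that $X_i\in L_1\cap\ker(\phi_i)$, precisely because $X_i^1=X_i$ is itself a Lie polynomial.

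I do not anticipate a substantive obstacle here: the whole argument rests on the preceding lemma together with the elementary fact that $L_k$ lies in the commutator kernel of the abelianization for $k\geq 2$. The only thing to double-check is that a valid correction to the stated range is $2\leq k\leq c-1$ rather than the typographically inverted $2\geq k\geq c-1$, but both the statement and the proof are indifferent to the upper bound $c-1$, which merely records that $\phi_{i,k}$ lands in the top piece $L_c$ when $k=c-1$.
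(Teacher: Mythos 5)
Your proof is correct, and since the paper states this corollary without supplying an argument, you are essentially filling in the step the authors left implicit. The reduction via $\ker(\phi_{i,k}) = L_k\cap\ker(\phi_i)$ and the grading observation that the weight-$k$ piece of $\langle 1,X_i,X_i^2,\dots\rangle_R$ is $R\cdot X_i^k$ is exactly what Lemma~\ref{lem:kernel} hands you; the only genuine content is showing $X_i^k\notin L_k$ for $k\geq 2$. Your abelianization argument is a clean and characteristic-independent way to do this. It is worth noting that another argument one might reach for in this paper's context --- applying the Dynkin map $\chi$, which satisfies $\chi(a)=ka$ for $a\in L_k$, and observing $\chi(X_i^k)=0$ since $[X_i,X_i]=0$ --- only shows $kX_i^k=0$, which fails to give a contradiction when $k$ is not invertible in $R$; your abelianization approach sidesteps this issue entirely and is therefore the more robust choice. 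Your correction of the typographical $2\geq k\geq c-1$ to $2\leq k\leq c-1$ is also right.
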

Note that we also have
\[L_{k+1}=\phi_{1,k}(L_{k})+\phi_{2,k}(L_{k}).\]

In order to establish structural results about $L$ we must introduce some new concepts.

\begin{dfn}	\label{def:permutations} Define a permutation $\sigma_1:X^\ast \rightarrow X^\ast$ by
	the following actions: $ 1\mapsto 1, X_2^i \mapsto X_2^i, X_1\cdot w\mapsto
	w\cdot X_1, X_2^j\cdot X_1\cdot w\mapsto w\cdot X_1\cdot X_2^j.$ Similarly
	define $\sigma_2.$ We write $\Gamma_n^1$ for the group of permutations of
	$X_n^\ast$ generated by $\sigma_1$ and similarly for $\Gamma_n^2.$ We also
	define $\Gamma_n$ for the group generated by $\sigma_1$ and $\sigma_2.$

\end{dfn}

\begin{rem}There may be interest in these groups in their own right. For instance we
have some calculations showing that
\[
|\Gamma_1| = 1, \quad |\Gamma_2| = 2, \quad |\Gamma_3| = 36, \quad
|\Gamma_4| = 96, \quad |\Gamma_5| = 329204736000000.
\]
Also the $\Gamma_1, \ldots,\Gamma_4$ are solvable whereas $\Gamma_5$ has a
composition series
\[
\Gamma_5 = N_1 \geq N_2 \geq \cdots \geq N_7 = 1
\]
where
\[
N_4/N_5 = A_{10} = N_5/N_6.
\]
\end{rem}
As a consequence of the Definition \ref{def:permutations} we have 

\begin{lem}	\label{lemma2}
	\begin{enumerate}
	\item Let $w \in X_n^\ast.$ Then there exists $g \in \Gamma_{n+2}$ so that
	$g(wX_1X_2)=wX_2X_1$;
	\item let $a \in R\langle X \rangle_n.$ Then $aX_1-X_1\sigma_1(a) \in \phi_2(R\langle X \rangle_n).$
	\end{enumerate}

\end{lem}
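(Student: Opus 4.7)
For part (1), I would try $g:=\sigma_{1}\sigma_{2}\sigma_{1}^{-1}\in\Gamma_{n+2}$ and verify $g(wX_1X_2)=wX_2X_1$ by three direct applications of Definition~\ref{def:permutations}. Since the maximal trailing $X_2$-block of $wX_1X_2$ has length one, the inverse of the rule $X_2^{j}X_1\cdot v\mapsto vX_1X_2^{j}$ (with $j=1$, $v=w$) yields $\sigma_{1}^{-1}(wX_1X_2)=X_2X_1w$. Since $X_2X_1w$ starts with $X_2$ and no $X_1$'s precede it, the $\sigma_{2}$-rule $X_2\cdot v\mapsto vX_2$ gives $\sigma_{2}(X_2X_1w)=X_1wX_2$. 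Finally, $X_1wX_2$ starts with $X_1$, so $\sigma_{1}(X_1wX_2)=wX_2X_1$, and composing completes (1).

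For part (2), by $R$-linearity of both sides in $a$ it suffices to treat a single word $w\in X_n^{*}$. If $w$ contains at least one $X_1$, write $w=X_2^{j}X_1w'$ with $j\geq 0$ and $|w'|=n-j-1$; then $\sigma_{1}(w)=w'X_1X_2^{j}$ by the same rule used above, so
\[
wX_1-X_1\sigma_{1}(w)=X_2^{j}X_1w'X_1-X_1w'X_1X_2^{j}=X_2^{j}c-cX_2^{j}
\]
where $c=X_1w'X_1$. If instead $w=X_2^{n}$, then $\sigma_{1}(w)=w$ by the fixed-point rule, and $wX_1-X_1\sigma_{1}(w)=X_2^{n}X_1-X_1X_2^{n}$, which has the same shape with $c=X_1$ and $j=n$.

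The remaining ingredient is the telescoping identity
\[
X_2^{j}c-cX_2^{j}=\phi_{2}\!\left(\sum_{i=0}^{j-1}X_2^{j-1-i}cX_2^{i}\right),
\]
valid for any $c\in R\langle X\rangle$ and any $j\geq 0$, which I would verify termwise using $\phi_{2}(X_2^{j-1-i}cX_2^{i})=X_2^{j-i}cX_2^{i}-X_2^{j-1-i}cX_2^{i+1}$ and summing. A quick weight check shows that the argument of $\phi_{2}$ lies in $R\langle X\rangle_{n}$, so $wX_1-X_1\sigma_{1}(w)\in\phi_{2}(R\langle X\rangle_{n})$, and extending $R$-linearly finishes (2). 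I do not expect a real obstacle in either half; the only points needing care are the case distinction for $\sigma_{1}^{-1}$ in part~(1) based on the maximal trailing $X_2$-block, and the uniform fit of both subcases of part~(2) into the same telescoping template.
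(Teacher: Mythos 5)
Your proposal is correct and takes essentially the same approach as the paper: the same element $g=\sigma_1\sigma_2\sigma_1^{-1}$ and the same three-step chain for part (1), and the same telescoping $\phi_2$-sum for part (2). The only cosmetic difference is that you fold the paper's separate cases $w=X_1u$ and $w=X_2^iX_1u$ into a single case $j\geq 0$ (with the telescoping sum empty when $j=0$), which is a slightly tidier packaging of the identical argument.
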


\begin{proof}
For the first part, notice that
\[
wX_1X_2 \stackrel{\sigma_1^{-1}}{\rightarrow} X_2X_1w
\stackrel{\sigma_2}{\rightarrow} X_1wX_2\stackrel{\sigma_1}{\rightarrow}
wX_2X_1.
\]
For (2), we show that this holds for any word $w \in X_n^\ast.$ There are three cases
to consider: (i): $w = X_1u, u \in X_{n-1}^\ast;$ (ii):
$w=X_2^n;$ (iii): $ w = X_2^iX_1u, u \in X_{n-i-1}^\ast.$
In case (i), we have
\[
wX_1 - X_1\sigma_1(w) = X_1uX_1-X_1uX_1 = 0 \in \phi_2(R \langle X \rangle_n);
\]
in case (ii) we have
\begin{align*}
	wX_1-X_1\sigma_1(w) & = X_2^nX_1 - X_1X_2^n \\
	& =  \sum_{j=1}^n \phi_2(X_2^{n-j}X_1X_2^{j-1});
\end{align*}
and in case (iii) we have
\begin{align*}
	wX_1-X_1\sigma_1(w) & =  X_2^iX_1uX_1 - X_1uX_1X_2^i \\
	& = \sum_{j=1}^i \phi_2(X_2^{i-j}X_1uX_1X_2^{j-1}).
\end{align*}
This completes the proof.
\end{proof}

In the light of Lemma \ref{lemma2} we make the following definition:

\begin{dfn}	\label{defn2}
	For every $n\geq 1$ set
	\[
	M_n^{X_1X_2} := \{a\in R \langle X \rangle_n: \phi_1(a) \in \phi_2(R \langle X \rangle_n)\}
	\]
	and similarly define $M_n^{X_2X_1}.$

\end{dfn}

We give the following description of these sets:

\begin{pro} 	\label{prop1} For every $n\geq 1$ we have
\begin{enumerate}
	\item $M_n^{X_1X_2} = R \langle X \rangle_n^{\Gamma_n^1}$
	\item $M_n^{X_2X_1} = R \langle X \rangle_n^{\Gamma_n^2}$
	\item $M_n^{X_1X_2}\cap M_n^{X_2X_1} = R \langle X \rangle_n^{\Gamma_n}$
\end{enumerate}
	where $R \langle X \rangle_n^{\Gamma_n^1}$, $R \langle X \rangle_n^{\Gamma_n^2}$, and $R \langle X \rangle_n^{\Gamma_n}$ denote the elements of $R \langle X \rangle_n$ that are left
	fixed by the permutation group $\Gamma_n^1$, $\Gamma_n^2$, and $\Gamma_n$, respectively.

\end{pro}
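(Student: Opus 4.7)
The plan is to prove part (1) directly; part (2) will then follow by the obvious symmetry that exchanges $X_1\leftrightarrow X_2$ (and hence $\phi_1\leftrightarrow\phi_2$, $\sigma_1\leftrightarrow\sigma_2$), and part (3) is immediate from (1) and (2), since the elements fixed by both $\langle\sigma_1\rangle$ and $\langle\sigma_2\rangle$ are precisely those fixed by the group they generate, namely $\Gamma_n$.

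For (1), the entry point is Lemma~\ref{lemma2}(2), which provides $aX_1-X_1\sigma_1(a)\in\phi_2(R\langle X\rangle_n)$ for every $a\in R\langle X\rangle_n$. Combined with the identity $\phi_1(a)=X_1a-aX_1$ this gives
\[
\phi_1(a)\;\equiv\; X_1\bigl(a-\sigma_1(a)\bigr)\pmod{\phi_2(R\langle X\rangle_n)},
\]
so $a\in M_n^{X_1X_2}$ if and only if $X_1(a-\sigma_1(a))\in\phi_2(R\langle X\rangle_n)$. The inclusion $R\langle X\rangle_n^{\Gamma_n^1}\subseteq M_n^{X_1X_2}$ is then automatic, since $\sigma_1(a)=a$ makes the expression vanish. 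Everything hinges on the reverse inclusion, which reduces to the following key claim: \emph{for every $b\in R\langle X\rangle_n$, if $X_1b\in\phi_2(R\langle X\rangle_n)$ then $b=0$.} Applying this with $b=a-\sigma_1(a)$ will yield $\sigma_1(a)=a$, as required.

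I would prove the key claim by induction on $n$; the base case $n=0$ is trivial because $\phi_2(R\langle X\rangle_0)=0$. For the inductive step, write $X_1b=\phi_2(c)=X_2c-cX_2$ and decompose $c=c_1+c_2$ into the parts consisting of words beginning with $X_1$ and with $X_2$ respectively. Projecting onto $X_1$-initial words gives $X_1b=-c_1X_2$, which forces every word of $b$ to end in $X_2$; writing $b=b'X_2$ and $c_1=X_1c_1'$, one reads off $c_1=-X_1b'$. Projecting onto $X_2$-initial words then gives $X_2c_1=-\phi_2(c_2)$, and writing $c_2=X_2c_2'$ (possible since $c_2$ is entirely $X_2$-initial) and using $\phi_2(X_2c_2')=X_2\phi_2(c_2')$, the equation collapses to $\phi_2(c_2')=X_1b'$. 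The inductive hypothesis applied to this new instance forces $b'=0$, whence $b=0$.

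The main obstacle is precisely the key claim above. The subtle point will be to verify that the two projections (onto $X_1$-initial and onto $X_2$-initial words) conspire both to pin down the shape of $b$ and to produce, via the $X_2$-initial projection, a strictly lower-weight instance of the same statement; the combinatorial bookkeeping is delicate but completely mechanical once one carefully tracks leading letters. Everything else in the argument is reformulation via Lemma~\ref{lemma2}(2) and the symmetry $X_1\leftrightarrow X_2$.
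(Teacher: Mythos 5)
Your proof is correct and follows the same route as the paper: both use Lemma~\ref{lemma2}(2) to obtain a second congruence, subtract to reduce to showing that $X_1(\sigma_1(a)-a)\in\phi_2(R\langle X\rangle_n)$ forces $\sigma_1(a)=a$, and then conclude. The one place you go further is that the paper simply asserts this final cancellation (``which forces $\sigma_1(a)=a$'') without argument, whereas you isolate it as the key claim ($X_1b\in\phi_2(R\langle X\rangle_n)\implies b=0$) and prove it by induction on the weight via the $X_1$-initial/$X_2$-initial word decomposition; your inductive argument is correct and fills a genuine gap in the paper's exposition.
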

\begin{proof}
It is sufficient to prove (1). Suppose first that
\[
a \in R \langle X \rangle_n^{\Gamma_n^1}.
\]
Then $\sigma_1 (a) = a$ and so
\begin{align*}
	aX_1-X_1\sigma_1(a) & = aX_1-X_1a \\
	& =  \phi_1(a) \in \phi_2(R \langle X \rangle_n)
\end{align*}
by Lemma \ref{lemma2}. Hence $a\in M_n^{X_1X_2}$

Now suppose that
\[
a \in M_n^{X_1X_2}.
\]
Then there exists $b$ so that
\[
aX_1-X_1a = X_2b-bX_2
\]
and by Lemma \ref{lemma2} there exists $c$ so that
\[
aX_1-X_1\sigma_1(a) = X_2c-cX_2.
\]
Subtracting we see that
\[
X_1(\sigma_1(a)-a) = X_2(b-c) - (b-c)X_2
\]
which forces
\[
\sigma_1(a) = a
\]
as required.
\end{proof}

We are now in a position to prove the result we need in order to consider
the question of uniformity for the free  nilpotent Lie algebras of arbitrary class on 2 generators.

\begin{thm} 	\label{thm1}
	For any $a \in M_n^{X_1X_2}\cap M_n^{X_2X_1}$ we have $\chi(a)=0$.

\end{thm}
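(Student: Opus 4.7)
The plan is to use Proposition \ref{prop1} to reformulate the hypothesis in terms of permutation invariance, and then analyse the action of $\chi$ on this invariant subspace. By Proposition \ref{prop1}, we have $M_n^{X_1X_2}\cap M_n^{X_2X_1}=R\langle X\rangle_n^{\Gamma_n}$, which is spanned by the orbit sums $P_O:=\sum_{w\in O} w$ as $O$ ranges over the $\Gamma_n$-orbits in $X_n^\ast$. By $R$-linearity of $\chi$ it therefore suffices to show $\chi(P_O)=0$ for every such orbit $O$. Note that the statement must be read for $n\geq 2$, since in degree $1$ we have $\chi(X_i)=X_i\neq 0$ while $M_1^{X_1X_2}\cap M_1^{X_2X_1}=R\langle X\rangle_1$.

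I would proceed by induction on $n\geq 2$. The base case $n=2$ is immediate from direct enumeration: the $\Gamma_2$-orbits are $\{X_1^2\}$, $\{X_2^2\}$, and $\{X_1X_2,X_2X_1\}$, and one checks that $\chi(X_i^2)=0$ and $\chi(X_1X_2+X_2X_1)=[X_1,X_2]+[X_2,X_1]=0$. For the inductive step, I would invoke Lemma \ref{lemma2}(2): since $\sigma_1(a)=a$, the lemma yields $\phi_1(a)=-\phi_2(b)$ for some $b\in R\langle X\rangle_n$, and symmetrically $\phi_2(a)=-\phi_1(c)$ for some $c$. Applying $\chi$ and using the elementary identity
\[
\chi(\phi_i(x))=\phi_i(\chi(x))-\chi(xX_i)\qquad (i=1,2),
\]
which follows from $\chi(X_ix-xX_i)=[X_i,\chi(x)]-\chi(xX_i)$, translates the $\sigma_i$-invariance of $a$ into two relations in $L_{n+1}$ involving $\chi(a)$, $\chi(b)$, $\chi(c)$ and the ``right-multiplication'' boundary terms $\chi(aX_i)$, $\chi(bX_2)$, $\chi(cX_1)$.

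The ultimate goal is to conclude $\phi_1(\chi(a))=0$ in $L_{n+1}$; for $n\geq 2$ the injectivity of $\phi_{1,n}:L_n\to L_{n+1}$ (the Corollary to Lemma \ref{lem:kernel}) then forces $\chi(a)=0$. The main obstacle is the control of the boundary terms $\chi(aX_i)$ and $\chi(bX_2)$, since $\chi$ is not equivariant with respect to right multiplication. One plausible resolution, supported by small-case calculations (e.g.\ for the $\Gamma_3$-orbit $\{X_1X_2^2,X_2X_1X_2,X_2^2X_1\}$ one finds that the corresponding $b$ is itself $\Gamma_3$-invariant), is that $b$ and $c$ inherit enough of the symmetry of $a$ for the inductive hypothesis to yield $\chi(b)=\chi(c)=0$; combining these vanishings with the structural relation above collapses the chain to $\phi_1(\chi(a))=0$, as required. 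As an alternative approach, one could attack $\chi(P_O)$ orbit-by-orbit, telescoping the right-nested bracketings using the explicit syllable-based description of $\sigma_1$ and $\sigma_2$ together with the Jacobi identity to produce explicit cancellation.
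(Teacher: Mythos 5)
Your primary inductive approach has two genuine gaps that prevent it from closing. First, there is a degree mismatch that makes the induction circular: when $\sigma_1(a)=a$, Lemma~\ref{lemma2}(2) produces a $b$ with $\phi_1(a)=\phi_2(b)$ in which $b$ has the \emph{same} degree $n$ as $a$ (both sides of the equation lie in $R\langle X\rangle_{n+1}$), so even if $b$ turned out to be $\Gamma_n$-invariant, invoking ``the inductive hypothesis'' for $b$ is really invoking the statement you are trying to prove. Second, and more seriously, you correctly identify the boundary terms as the obstacle but do not resolve them: writing out your identity gives
\[
\phi_1(\chi(a)) \;=\; \chi(aX_1)\;-\;\chi(bX_2),
\]
and even granting $\chi(b)=0$, there is no reason offered why $\chi(aX_1)=\chi(bX_2)$; the ``plausible resolution'' addresses only the middle terms $\chi(b),\chi(c)$, not these right-multiplication remainders. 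A plan that hinges on an unverified cancellation of the very terms flagged as the obstacle is not yet a proof.

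The paper instead proceeds via a combinatorial structure result that you do not mention: it shows $\Gamma_n$ acts transitively on each set $\{w : |w|_1=k, |w|_2=l\}$, hence a $\Gamma_n$-invariant element is a scalar combination of the \emph{full} multidegree sums $P_i=\sum_{|w|_1=i}w$. On each $P_i$, $\chi$ is killed word by word: words ending in $X_j^m$ ($m\ge 2$) have $\chi(w)=0$ because the innermost bracket is $[X_j,X_j]=0$, and words ending in $X_1X_2$ pair bijectively with words ending in $X_2X_1$ with $\chi(vX_1X_2)=-\chi(vX_2X_1)$ by antisymmetry of the bracket. Your closing ``alternative approach'' gestures in roughly this direction (orbit-by-orbit cancellation) but is too vague to evaluate, invokes ``Jacobi'' where plain antisymmetry suffices, and omits the key observation --- available from Lemma~\ref{lemma2}(1) --- that an orbit containing $vX_1X_2$ also contains $vX_2X_1$, which is what makes the pairing well defined within a single orbit sum. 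Without either the transitivity result or this orbit-closure observation, the cancellation argument does not go through.

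Your reduction to orbit sums via Proposition~\ref{prop1} and the base case $n=2$ are correct, and the commutator identity $\chi(\phi_i(x))=\phi_i(\chi(x))-\chi(xX_i)$ is a valid and potentially useful observation, but the core of the argument is missing.
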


\begin{proof}
To prove this it is sufficient to consider sentences all of whose
words $w$ have $|w|_1=k, |w|_2=l$ with $l+k=n$ say. The cases where one of
$k,l$ are zero are easily dealt with and so we will from hereon assume that
$k,l\geq 1.$ Suppose then that
\[
w \in \Delta := \sum_{|w|_1=k,|w|_2=l}w.
\]
We know by Proposition \ref{prop1} that given any $\sigma\in \Gamma_n,$ we have
$\sigma(\Delta) =\Delta$ and so $w\in \Delta \implies \sigma(w)\in \Delta$
also. We claim that $\Gamma_n$ acts transitively on the set $B=\{w \in
A_n^\ast:|w|_1=k,|w|_2=l\}.$

To prove this claim let $w_1,w_2 \in B.$ We wish to find $\sigma$ so that
$\sigma(w_1)=w_2.$  Without loss of generality we may assume that $w_2
=X_1^kX_2^l$ and then again without loss of generality that
$w_1=X_1^{a_1}X_2^{b_1}\cdots X_1^{a_r}X_2^{b_r}$ where $a_i,b_i>0$ for all
$i.$ We induct on $r.$ If $r=1$ the claim is trivially true. Considering the
general case notice that
\[
X_1^{a_1}X_2^{b_1}\cdots X_1^{a_r}X_2^{b_r} \stackrel{\sigma_2}{\rightarrow}
X_2^{b_1-1}X_1^{a_2}\cdots X_2^{b_r+1}X_1^{a_1}
\stackrel{\sigma_1^{-a_1}}{\rightarrow} X_1^{a_1}X_2^{b_1-1}\cdots
X_2^{b_r+1}.
\]
Repeating this process $b_1$ times one sees it is possible to map
\[
X_1^{a_1}X_2^{b_1}\cdots X_1^{a_r}X_2^{b_r} \rightarrow
X_1^{a_1+a_2}X_2^{b_2}\cdots X_1^{a_r}X_2^{b_1+b_r} :=
X_1^{c_1}X_2^{d_1}\cdots X_1^{c_{r-1}}X_2^{d_{r-1}}
\]
and by induction the proof of the claim is completed.

We have shown that given any $\alpha \in R \langle X \rangle_n^{\Gamma_n}$ there exist
$\alpha_0, \ldots ,\alpha_n \in K$ so that
\[
\alpha = \sum_{i=0}^n \alpha_iP_i(X_1,X_2), \textnormal{ where }
P_i(X_1,X_2) = \sum_{|w|_1 =i, |w|_2=n-i}w
\]
To complete the proof of the theorem it is therefore sufficient to show that
for any $i$ we have $\chi(P_i(X_1,X_2))=0.$ Now for any $w \in P_i$ the right
hand end of $w$ is one of the following four options: i) $X_1^m$, ii)
$X_2^m$, iii) $X_1X_2$, iv) $X_2X_1$ where in cases i) and ii) $m$ is
greater than $1.$ In cases i),ii) it is clear that $r(w)=0.$ Now notice that
for any $w$ as in case iii), writing $w= vX_1X_2$ then there exists
$\overline{w}:=vX_2X_1 \in P_i$ also. The proof is completed by noticing
that $\chi(w)=-\chi(\overline{w}).$
\end{proof}

\section{Counting graded ideals of $\mff_{c,2}(\Fp)$ for $c\leq 5$}\label{sec:cleq5}
For $c\in\N$, let $I=I_{1}\oplus\cdots\oplus I_{c}$ denote the graded ideal of $L=\mff_{c,2}(\Fp)$. For $k\in[c-1]$ and $I_k \subset L_{k}$, define a map  $\phi(I_k):=\phi_{1}(I_k)+\phi_{2}(I_k)$. Note that $\phi(L_k)=L_{k+1}$ and $I\idealgr L$ implies $\phi(I_k)\leq I_{k+1}$ for all $k\in[c-1]$.

For $\boldsymbol{m}=(m_{k})_{k\in[c]}\in\N_{0}^{c}$, write
\[b_{\boldsymbol{m}}(L)=\{I\idealgr L\mid \dim(I_1)=m_1,\ldots,\dim(I_c)=m_c\}\]
and
\[b_{m}(L)=\{I\idealgr L\mid \dim(I)=m\}=\sum_{\{\bsm|m_1+\cdots+m_c=m\}}b_{\bsm}(L).\]
By definition we have
\begin{align*}
	\zeta_{L}^{\idealgr}(s)&=\sum_{m=0}^{n}b_{m}(L)t^{n-m}\\
	&=\sum_{m=0}^{n}\sum_{\{\bsm|m_1+\cdots+m_c=m\}}b_{\bsm}(L)t^{n-m}\\
	&=\sum_{m_1=0}^{d_1}\cdots\sum_{m_c=0}^{d_c}b_{\bsm}(L)t^{n-(m_1+\cdots+m_c)}.
\end{align*}

Normally, there is no simple description for $\sum_{m_1=0}^{d_1}\cdots\sum_{m_c=0}^{d_c}b_{\bsm}(L)$. However, for $c\leq5$ we can explicitly write down $\sum_{m_1=0}^{d_1}\cdots\sum_{m_c=0}^{d_c}b_{\bsm}(L)$ easily.

For $k\in[c-1]$, let $d_{\phi}^{(k)}(I_{k}):=\dim(\phi(I_k))$ denote the dimension of $\phi(I_k)$ in $L_{k+1}$. The crucial observation is that, for $c\leq5$, the dimension $m_{k}=\dim(I_k)$ completely determines  $d_{\phi}^{(k)}(I_{k})$. In other words, for any $m_{k}$-dimensional subspaces $I_{k}$ and $I_{k}'$ in $L_{k}$, the dimensions of their images $\phi(I_{k})$ and $\phi(I_{k}')$ in $L_{k+1}$ are the same.

\begin{lem}\label{lem:dimension}
	Let $I=I_1\oplus\cdots\oplus I_5$ be a graded ideal of $\mff_{5,2}(\Fp)$.
	\begin{enumerate}
		\item	For $k\in[4]$, if $m_{k}=0$ then $d_{\phi}^{(k)}(I_{k})=0$. 
		\item If $m_1=1$ or $2$, then $d_{\phi}^{(1)}(I_{1})=1$.
		\item If $m_2=1$, then $d_{\phi}^{(2)}(I_{2})=2$.
		\item If $m_3=2$, then $d_{\phi}^{(3)}(I_{3})=3$, and if $m_3=1$, then $d_{\phi}^{(3)}(I_{3})=2$.
		\item If $m_4=3$, then $d_{\phi}^{(4)}(I_{4})=6$, if $m_4=2$, then $d_{\phi}^{(4)}(I_{4})=4$, and if $m_4=1$, then $d_{\phi}^{(4)}(I_{4})=2$.		
	\end{enumerate}
\end{lem}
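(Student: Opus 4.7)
The plan is to verify each of the five claims by combining the Witt dimensions $\dim_{\Fp}L_k=2,1,2,3,6$ for $k=1,\ldots,5$ with the injectivity of $\phi_{i,k}:L_k\to L_{k+1}$ for $k\geq 2$ (the corollary of Lemma~\ref{lem:kernel}). Claim (1) is trivial, and claims (2), (3) together with the case $m_3=2$ of (4) all reduce to the identities $\phi(L_k)=L_{k+1}$ for $k=1,2,3$. The first two are immediate from $\phi_1(X_2)=[X_1,X_2]$ and $\dim L_2=1$. For $k=3$, setting $e_i:=\phi_i([X_1,X_2])$, the Jacobi identity yields $[X_2,e_1]=[X_1,e_2]$, so $\phi(L_3)=\langle[X_1,e_1],[X_1,e_2],[X_2,e_2]\rangle=L_4$.

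The remaining case of (4), $m_3=1$, is the delicate one. I would write $I_3=\langle a\rangle$ with $0\neq a\in L_3$ and show that $\phi_1(a)$ and $\phi_2(a)$ are linearly independent in $L_4$. Suppose for contradiction that they are dependent, so $[\mu X_1+\nu X_2,a]=0$ for some $(\mu,\nu)\neq(0,0)$. Setting $Y:=\mu X_1+\nu X_2\in L_1\setminus\{0\}$ and assuming $\mu\neq 0$ (the case $\nu\neq 0$ being handled symmetrically), $\{Y,X_2\}$ is another basis of $L_1$, so $L$ is the free Lie algebra on $\{Y,X_2\}$. Applying Lemma~\ref{lem:kernel} and its corollary to this basis, the map $b\mapsto[Y,b]$ is injective on $L_3$, forcing $a=0$, a contradiction. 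Hence $\dim\phi(I_3)=2$.

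Claim (5) admits a clean global dimension count. Each $\phi_{i,4}$ is injective, so $\dim\phi_i(L_4)=3$. Using the Jacobi identity in the form $[[X_1,X_2],b]=[X_1,[X_2,b]]-[X_2,[X_1,b]]$ for $b\in L_3$, one deduces $[L_2,L_3]\subseteq[L_1,L_4]$, whence $\phi(L_4)=[L_1,L_4]=L_5$ has dimension $6=3+3$. Therefore $\phi_1(L_4)\cap\phi_2(L_4)=0$, and this trivial intersection passes to every subspace $I_4\leq L_4$, yielding $\dim\phi(I_4)=2m_4$ for each $m_4\in\{1,2,3\}$.

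The main obstacle is the $m_3=1$ case: a global dimension count does \emph{not} suffice, since $\dim(\phi_1(L_3)\cap\phi_2(L_3))=2+2-3=1$ is already nonzero, so one genuinely needs the injectivity statement applied to an \emph{arbitrary} free basis of $L_1$ in order to rule out linear dependence of $\phi_1(a),\phi_2(a)$ for a given specific nonzero $a\in L_3$.
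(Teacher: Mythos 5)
Your proof is correct, and it fills in considerably more detail than the paper, which dismisses items (1)--(4) as trivial and for (5) simply asserts the key fact $\phi_1(L_4)\cap\phi_2(L_4)=\{0\}$ without derivation. Two points of comparison are worth noting. For (5) your argument is essentially the paper's, but you justify the disjointness cleanly by the Jacobi identity $[L_2,L_3]\subseteq[L_1,L_4]$ together with the count $3+3=6=\dim L_5$; note, though, that $L_5=[L_1,L_4]$ is automatic from the definition of the lower central series, so the Jacobi step is not strictly needed --- injectivity of $\phi_{i,4}$ alone gives $\dim\phi_i(L_4)=3$, and then $\dim L_5=6$ forces the direct sum. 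For the $m_3=1$ case of (4), which the paper treats as obvious, your change-of-basis argument (replacing $\{X_1,X_2\}$ by $\{Y,X_2\}$ with $Y=\mu X_1+\nu X_2$ and invoking Lemma~\ref{lem:kernel} for the new free generating set) is a genuinely different and more conceptual route than the direct coordinate check one would otherwise do from Table~\ref{tab:basis}: writing $a=\alpha X_{3,1}+\beta X_{3,2}$, one sees immediately that $\phi_1(a)$ has no $X_{4,3}$-component and $\phi_2(a)$ has no $X_{4,1}$-component, so proportionality forces $\alpha=\beta=0$. The coordinate check is quicker; your argument is basis-independent and would generalize to situations where no explicit basis is at hand. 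Both are valid, and your version correctly identifies why a bare dimension count cannot settle the rank-one case (since $\phi_1(L_3)\cap\phi_2(L_3)$ is already one-dimensional).
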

\begin{proof}
	The only non-trivial result is (5), which follows from the fact that in $L_{5}$, $\phi_{1}(L_4)\cap\phi_{2}(L_4)=\{0\}$, which forces $d_{\phi}^{(4)}(I_{4})=2m_{4}$.
\end{proof}

For $k\in[4]$, Lemma \ref{lem:dimension} allows us to abuse the notation and write $d_{\phi}^{(k)}(m_k)=d_{\phi}^{(k)}(I_{k})$. Hence we can write down the formula for $\sum_{m_1=0}^{d_1}\cdots\sum_{m_c=0}^{d_c}b_{\bsm}(L)$ systematically using the `top-down'-approach.

\begin{thm}
	For $c\leq5$, we have 
	\begin{align*}
		\zeta_{\mff_{c,2}(\Fp)}^{\idealgr}(s)&=\sum_{m_1=0}^{d_1}\cdots\sum_{m_c=0}^{d_c}b_{\bsm}(\mff_{c,2}(\Fp))t^{d-(m_{1}+\cdots+m_{c})}\\
		&=\sum_{m_1=0}^{d_1}\cdots\sum_{m_{c}=0}^{d_c}\binom{d_1}{m_{1}}_{p}\binom{d_{2}-d_{\phi}^{(1)}(m_1)}{m_2-d_{\phi}^{(1)}(m_1)}_{p}\cdots\binom{d_{c}-d_{\phi}^{(c-1)}(m_{c-1})}{m_c-d_{\phi}^{(c-1)}(m_{c-1})}_{p}t^{d-(m_1+\cdots+m_c)}
	\end{align*}
\end{thm}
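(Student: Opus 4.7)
The plan is to build a graded ideal $I = I_1\oplus\cdots\oplus I_c$ of $\mff_{c,2}(\Fp)$ layer by layer, exploiting Lemma \ref{lem:dimension}. First I would reduce the full ideal condition $[L,I]\subseteq I$ to the much simpler requirement that $\phi(I_k)\subseteq I_{k+1}$ for every $k\in[c-1]$. This reduction is automatic for a graded Lie algebra generated in degree one: since $L$ is generated by $L_1$, the Jacobi identity and induction on $j$ show that each condition $[L_j,I_k]\subseteq I_{j+k}$ with $j\geq 2$ follows from the degree-one conditions $\phi(I_m)\subseteq I_{m+1}$.

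With the ideal condition in this form, I would fix a dimension vector $\bsm=(m_1,\dots,m_c)$ and count admissible tuples $(I_1,\dots,I_c)$ sequentially. By Fact \ref{fact:poly}(1) there are $\binom{d_1}{m_1}_p$ choices of $I_1\leq L_1$ of dimension $m_1$. Given $I_k$, every admissible $I_{k+1}$ is an $m_{k+1}$-dimensional subspace of $L_{k+1}$ containing the fixed subspace $\phi(I_k)$, so by Fact \ref{fact:poly}(2) the number of such $I_{k+1}$ equals
\[
\binom{d_{k+1}-\dim\phi(I_k)}{m_{k+1}-\dim\phi(I_k)}_p.
\]
Lemma \ref{lem:dimension} now supplies the essential simplification: for $c\leq 5$, the integer $\dim\phi(I_k)$ equals $d_\phi^{(k)}(m_k)$ and depends only on $m_k$, not on the particular $m_k$-dimensional subspace $I_k$. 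The counting at each layer therefore factors through $\bsm$ alone, and multiplying yields
\[
b_{\bsm}(\mff_{c,2}(\Fp)) = \binom{d_1}{m_1}_p\prod_{k=1}^{c-1}\binom{d_{k+1}-d_\phi^{(k)}(m_k)}{m_{k+1}-d_\phi^{(k)}(m_k)}_p.
\]
Summing with weight $t^{d-(m_1+\cdots+m_c)}$ over all $\bsm\in\{0,\dots,d_1\}\times\cdots\times\{0,\dots,d_c\}$ then produces the claimed formula.

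The substantive ingredient is not the combinatorics but Lemma \ref{lem:dimension} itself. A priori $\dim\phi(I_k)$ can depend on the specific subspace $I_k$ and not only on its dimension, in which case the Gaussian binomial at the $(k{+}1)$-st layer varies with the choice of $I_k$ and the product form breaks down. The point of the structural analysis in Section \ref{sec:fc2}, culminating in Theorem \ref{thm1}, is that no such collapse happens for $k\leq 4$: for $k=4$ one exploits $\phi_1(L_4)\cap\phi_2(L_4)=\{0\}$ inside $L_5$ to force $\dim\phi(I_4)=2m_4$ for every $I_4$, while for $k\leq 3$ the statement is a direct low-dimensional check. Once Lemma \ref{lem:dimension} is granted, the argument reduces to a routine iteration of Fact \ref{fact:poly}(2); the first layer at which this reasoning breaks is exactly $k=5$, which is precisely the source of the non-uniformity exploited in Theorem \ref{thm:fin.uniform}.
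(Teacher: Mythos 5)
Your proposal is correct and takes essentially the same layer-by-layer approach as the paper: choose $I_1$, then iteratively count the subspaces $I_{k+1}$ containing $\phi(I_k)$ via the Gaussian-binomial formula for subspaces over a fixed subspace, with the preceding lemma guaranteeing that $\dim\phi(I_k)$ depends only on $m_k$, so the count factors over $\bsm$. The only step you make explicit that the paper leaves implicit is the reduction of the full graded-ideal condition $[L,I]\subseteq I$ to the degree-one conditions $\phi(I_k)\subseteq I_{k+1}$, justified via the Jacobi identity together with the fact that $L$ is generated in degree one.
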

\begin{proof}
	We start from $I_{1}$. For each $0\leq m_1\leq d_1$, there are exactly $\binom{d_1}{m_1}_{p}$ possible subspaces $I_{1}$ of dimension $m_1$ in $L_{1}$.
	
	Once we choose our $I_{1}$, we need to have $\phi(I_{1})\leq I_{2}\leq L_2$. Since by Lemma \ref{lem:dimension} the dimension of $\phi(I_{1})$ is $d_{\phi}^{(1)}(m_1)$, the number of such $I_{2}$ of dimension $m_{2}$ is simply $\binom{d_2-d_{\phi}^{(1)}(m_1)}{m_{2}-d_{\phi}^{(1)}(m_1)}_{p}$. 	One can repeat this process until we choose $I_c$ of dimension $m_{c}$.
\end{proof}

With this we have:
\begin{thm}\label{thm:cleq5}
	\begin{align*}
		\zeta_{\mff_{2,2}(\Fp)}^{\idealgr}(s)&=1+\binom{2}{1}_{p}t+t^2+t^3,\\
		\zeta_{\mff_{3,2}(\Fp)}^{\idealgr}(s)&=1+\binom{2}{1}_{p}t+t^2+t^3+\binom{2}{1}_{p}t^4+t^5,\\
		\zeta_{\mff_{4,2}(\Fp)}^{\idealgr}(s)&=1+\binom{2}{1}_{p}t+t^2+t^3+\binom{2}{1}_{p}t^4+\left(1+\binom{2}{1}_p\right)t^5+\binom{3}{2}_{p}t^6+\binom{3}{1}_{p}t^{7}+t^{8},\\	
		\zeta_{\mff_{5,2}(\Fp)}^{\idealgr}(s)&=1+\binom{2}{1}_{p}t+t^2+t^3+\binom{2}{1}_{p}t^4+\left(1+\binom{2}{1}_p\right)t^5+\left(\binom{3}{2}_{p}+\binom{2}{1}_p\binom{2}{1}_p\right)t^6\\
		&+\left(\binom{2}{1}_p+\binom{3}{2}_p\binom{2}{1}_p+\binom{3}{1}_{p}\right)t^{7}+\left(\binom{3}{2}_p+\binom{3}{2}_p\binom{4}{3}+1\right)t^{8}\\
		&+\left(\binom{3}{1}_p\binom{4}{2}_p+\binom{6}{5}_p\right)t^{9}+\left(\binom{3}{1}_p\binom{4}{1}_p+\binom{6}{4}_p\right)t^{10}+\left(\binom{3}{1}_p+\binom{6}{3}_p\right)t^{11}\\
		&+\binom{6}{2}_{p}t^{12}+\binom{6}{1}_{p}t^{13}+t^{14}.			
	\end{align*}
In particular, this proves Theorem \ref{thm:uniform}.
\end{thm}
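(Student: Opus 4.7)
The plan is to apply the top-down formula from the preceding theorem directly. By Fact \ref{fact:poly}, each Gaussian binomial $\binom{a}{b}_p$ is a polynomial in $p$, and the $t$-exponent $n-(m_1+\cdots+m_c)$ depends only on the dimension vector $\bsm$; so substituting the values of $d_\phi^{(k)}(m_k)$ supplied by Lemma \ref{lem:dimension} will turn the sum into a visibly polynomial expression in $p$ and $t$, establishing $\Fp$-uniformity and hence Theorem \ref{thm:uniform}.

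First I would record the relevant Witt numbers $d_1=2$, $d_2=1$, $d_3=2$, $d_4=3$, $d_5=6$, so that the total dimensions are $n=3,5,8,14$ for $c=2,3,4,5$ respectively, and then translate Lemma \ref{lem:dimension} into the admissibility constraint $m_{k+1}\geq d_\phi^{(k)}(m_k)$, with the explicit dependences $d_\phi^{(1)}(m_1)=\min\{m_1,1\}$, $d_\phi^{(2)}(m_2)=2m_2$, $(d_\phi^{(3)}(0),d_\phi^{(3)}(1),d_\phi^{(3)}(2))=(0,2,3)$, and $d_\phi^{(4)}(m_4)=2m_4$. For $c\in\{2,3,4\}$ the set of admissible tuples is small; enumerating them, writing out the Gaussian-binomial product and the $t$-exponent for each, and collecting like powers of $t$ would reproduce the first three formulas in the statement. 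As a sanity check, the $t^5$-coefficient of $\zeta_{\mff_{4,2}(\Fp)}^{\idealgr}(s)$ is assembled from $(m_1,m_2,m_3,m_4)=(0,0,0,3)$ contributing $1$ and $(0,0,1,2)$ contributing $\binom{2}{1}_p$, summing to $1+\binom{2}{1}_p$ as displayed.

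The main (though still routine) obstacle is the $c=5$ case, where the rank-$6$ layer $L_5$ and the identity $d_\phi^{(4)}(m_4)=2m_4$ — itself a reflection of the fact that $\phi_1(L_4)\cap\phi_2(L_4)=\{0\}$ inside $L_5$ used in Lemma \ref{lem:dimension}(5) — generate many simultaneously non-zero contributions. For each admissible quadruple $(m_1,m_2,m_3,m_4)$ the innermost sum over $m_5\in\{2m_4,\ldots,6\}$ supplies a single factor $\binom{6-2m_4}{m_5-2m_4}_p$, and one must then collect all such products across the admissible tuples to read off the coefficient of each $t^j$ for $j\in\{0,\ldots,14\}$. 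Carrying this bookkeeping out carefully reproduces the displayed polynomial, after which $\Fp$-uniformity of $\zeta_{\mff_{c,2}}^{\idealgr}(s)$ for all $c\leq 5$ — and with it Theorem \ref{thm:uniform} — is immediate, since every coefficient lies in $\Q[p]$.
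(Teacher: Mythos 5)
Your proposal is correct and is exactly the paper's approach: the paper derives Theorem~\ref{thm:cleq5} by plugging the Witt numbers $d_1,\dots,d_5$ and the values of $d_\phi^{(k)}(m_k)$ from Lemma~\ref{lem:dimension} into the top-down product formula of the preceding theorem and then collecting coefficients of powers of $t$. Your closed-form translations $d_\phi^{(1)}(m_1)=\min\{m_1,1\}$, $d_\phi^{(2)}(m_2)=2m_2$, $(d_\phi^{(3)}(0),d_\phi^{(3)}(1),d_\phi^{(3)}(2))=(0,2,3)$, $d_\phi^{(4)}(m_4)=2m_4$ and your $t^5$ sanity check all match the paper's data.
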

\section{Counting one-step and two-step graded ideals in free Lie algebras on 2 generators}\label{sec:nsg}
In this section, by using the combinatorial results we have proved in Section \ref{sec:fc2} about the maps $\phi_1$ and $\phi_2$, we prove another uniformity results on general one-step graded ideals and some special two-step graded ideals of $\mff_{c,2}(\Fp)$. 

Recall Definition \ref{def:nsg}. An ideal $I \triangleleft L$ is said to be a one-step graded ideal if it is of the form
\[
I = 0\oplus\cdots0\oplus I_k \oplus I_{k+1} \oplus L_{k+2} \oplus \cdots\oplus L_c \quad \text {
	for some } 1\leq k\leq c-1.
\]

Note that in this case the codimension of $I$ in $L$ is given by the sum
	of the codimension of $I_k$ in $L_k$ with the codimension of $I_{k+1}$ in
	$L_{k+1}.$

Throughout this section, to ease the notation we let $L=\mff_{c,2}(\Fp)$ for $c\in\N$ and assume $p\geq c$ (as we only want to prove results about all but finitely many primes $p$ we can make the latter assumption). Given a subspace $I_{k}\leq L_{k}$, let us define the \textit{dimension of collapse} of $I_{k}$, denoted by $\dim_{col}(I_k)$, to be $\dim(\phi_1 (I_k) \cap \phi_2 (I_k))$. Then we have
\begin{align*}
	\dim (\phi_1 (I_k)+\phi_2 (I_k)) &= \dim(\phi_1(I_k)) +\dim(\phi_2(I_k))- \dim_{col}(I_k)\\
	&= 2\dim (I_k) - \dim_{col}(I_k).
\end{align*}

For any $1\leq k \leq c-1$, define $\Lambda_{k+1} := \phi_1(L_k) \cap \phi_2(L_k)\leq L_{k+1}$,
and write $\dim(\Lambda_{k+1})=l_{k+1}$. By the injectivity of $\phi_{i}$'s we can also define \[W_{k,1}:=\phi_{1}^{-1}(\Lambda_{k+1}),\,W_{k,2}:=\phi_{2}^{-1}(\Lambda_{k+1}),\] where $\dim(W_{k,1})=\dim(W_{k,2})=l_{k+1}$. For each $k$, subspaces $\Lambda_{k+1}$, $W_{k,1}$, and $W_{k,2}$ are uniquely determined from the structure of $L$. We prove that $W_{k,1}$ and $W_{k,2}$ are also disjoint.

\begin{lem}
	$W_{k,1}\cap W_{k,2}=\{0\}$ for all $1\leq k\leq c-1$ and $p\geq c$.
\end{lem}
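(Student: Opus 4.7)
The plan is to recognise that $W_{k,1}\cap W_{k,2}$ is, by definition, the intersection of $L_k$ with $M_k^{X_1X_2}\cap M_k^{X_2X_1}$ (in the sense of Definition~\ref{defn2}), so that Theorem~\ref{thm1} applies and forces $\chi(a)=0$ for any such element $a$. The identity $\chi(a)=ka$ for $a\in L_k$ recorded in Section~\ref{sec:fc2}, combined with the hypothesis $p\geq c>k$, will then produce $a=0$.

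Concretely, for $2\leq k\leq c-1$ (where the corollary to Lemma~\ref{lem:kernel} gives injectivity of $\phi_{1,k}$ and $\phi_{2,k}$), I would take $a\in W_{k,1}\cap W_{k,2}\subseteq L_k\subseteq R\langle X\rangle_k$. The condition $a\in W_{k,1}$ unpacks as
\[\phi_1(a)\in\Lambda_{k+1}\subseteq\phi_2(L_k)\subseteq\phi_2(R\langle X\rangle_k),\]
which is precisely the defining condition of $M_k^{X_1X_2}$; symmetrically $a\in W_{k,2}$ puts $a$ in $M_k^{X_2X_1}$. Theorem~\ref{thm1} then yields $\chi(a)=0$, and since $a\in L_k$, the identity $\chi(a)=ka$ gives $ka=0$ in $L_k(\Fp)$. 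Because $k\leq c-1<c\leq p$, the integer $k$ is a unit in $\Fp$, whence $a=0$.

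The borderline case $k=1$ has to be handled separately because $\phi_{1,1}$ and $\phi_{2,1}$ carry nontrivial kernels $\langle X_1\rangle$ and $\langle X_2\rangle$: with the natural interpretation of $W_{1,1}$ and $W_{1,2}$ as complements of these kernels inside $L_1$ (matching the expected dimension $l_2=1$), one takes $W_{1,1}=\langle X_2\rangle$ and $W_{1,2}=\langle X_1\rangle$, and disjointness is immediate. I do not anticipate any real obstacle here: the substantive content has already been packaged into Theorem~\ref{thm1}, and what remains is a matter of translating between two pieces of notation and applying the scalar identity $\chi|_{L_k}=k\cdot\id$.
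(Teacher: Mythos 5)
Your proof for $2\leq k\leq c-1$ is essentially identical to the paper's: both observe that $a\in W_{k,1}\cap W_{k,2}$ forces $\phi_1(a)\in\phi_2(L_k)$ and $\phi_2(a)\in\phi_1(L_k)$, hence $a\in M_k^{X_1X_2}\cap M_k^{X_2X_1}$, invoke Theorem~\ref{thm1} to conclude $\chi(a)=0$, and then use $\chi|_{L_k}=k\cdot\id$ together with $p\geq c>k$ to kill $a$. Where you go beyond the paper is the $k=1$ case: the paper runs the same argument for all $1\leq k\leq c-1$, but you are right to be suspicious there. With the definition $W_{1,i}=\phi_i^{-1}(\Lambda_2)$ taken literally, one has $\Lambda_2=L_2$ and $\phi_{i,1}$ surjective with nontrivial kernel, so $W_{1,1}=W_{1,2}=L_1$ and the claimed dimension $l_2=1$ fails; moreover Theorem~\ref{thm1} is vacuous at $n=1$ (e.g.\ $X_1\in M_1^{X_1X_2}\cap M_1^{X_2X_1}$ but $\chi(X_1)=X_1\neq0$), so the paper's argument genuinely does not cover $k=1$. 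Your repair — interpreting $W_{1,1}$ and $W_{1,2}$ as the canonical complements $\langle X_2\rangle$ and $\langle X_1\rangle$ of $\ker\phi_{1,1}$ and $\ker\phi_{2,1}$, which do have the right dimension and are visibly disjoint — is a sensible reading and makes the lemma true as stated. This caveat does not affect the downstream use of the lemma (Theorem~\ref{thm:dim.col2.poly} is stated only for $k>1$, and $k=1$ is handled by hand in Corollary~\ref{thm:2sg.zeta}), but it is a real gap in the paper's own proof, and your version is the more careful one.
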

\begin{proof}
		Suppose $a\in W_{k,1}\cap W_{k,2}$. Then both $\phi_1(a),\phi_2(a)\in\Lambda_{k+1}$, which implies
	$\phi_1(a)\in\phi_2(L_k)$ and $\phi_2(a)\in\phi_1(L_k)$. This means $a\in M_n^{X_1X_2}\cap M_n^{X_2X_1} \cap L_k$, and by the definition of $\chi$ and Theorem \ref{thm1}, we have $\chi(a)=ka=0$. For $p\geq c$ and $1\leq k\leq c-1$, as $k\neq0$  we have $a=0$ as required.
\end{proof}

Now suppose $I_{k}$ is an $m$-dimensional subspace of $L_{k}$. Let us write $U_{k,1}:=I_{k}\cap W_{k,1}$ and $U_{k,2}:=I_{k}\cap W_{k,2}$. Then $\dim(\phi(I_{k}))=2m-i$ if and only if $\dim_{col}(I_k)=i$, which is equivalent to say $\dim(\phi_{1}(U_{k,1})\cap\phi_{2}(U_{k,2}))=i$. Using this we prove the following result:

\begin{thm} \label{thm:dim.col2.poly}
	For $1<k \leq c-1$, let $f_k(m,i,p)$ denote the number of spaces $I_k \leq L_k$ such that $\dim I_k
	=m$ and  $\dim_{col}(I_k)=i$. Then $f_k(m,i,p)$ is uniformly given by a polynomial in $p$.
\end{thm}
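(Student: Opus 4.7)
The plan is to condition on the dimensions $n_1 := \dim(I_k \cap W_{k,1})$ and $n_2 := \dim(I_k \cap W_{k,2})$, which are independent parameters since $W_{k,1} \cap W_{k,2} = 0$. Writing $U_{k,j} := I_k \cap W_{k,j}$, the corollary of Lemma \ref{lem:kernel} (valid for $k \geq 2$) says $\phi_j$ is injective on $L_k$, so it restricts to a linear isomorphism $\phi_j|_{W_{k,j}} : W_{k,j} \xrightarrow{\sim} \Lambda_{k+1}$. Hence $V_j := \phi_j(U_{k,j})$ is an $n_j$-dimensional subspace of $\Lambda_{k+1}$, and the condition $\dim_{col}(I_k) = i$ is equivalent to $\dim(V_1 \cap V_2) = i$.

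I would then decompose the count as
\[
f_k(m,i,p) \;=\; \sum_{n_1, n_2} N(n_1, n_2, i; p) \cdot h_k(n_1, n_2, m; p),
\]
where $N(n_1, n_2, i; p)$ counts pairs $(U_{k,1}, U_{k,2})$ with $U_{k,j} \leq W_{k,j}$ of dimension $n_j$ satisfying the collapse condition, and $h_k(n_1, n_2, m; p)$ counts $m$-dimensional $I_k \leq L_k$ with $I_k \cap W_{k,j} = U_{k,j}$ for some fixed such pair. Under the isomorphisms $\phi_j|_{W_{k,j}}$, counting pairs $(U_{k,1}, U_{k,2})$ reduces to counting pairs of subspaces $(V_1, V_2)$ of $\Lambda_{k+1}$ with $\dim V_j = n_j$ and $\dim(V_1 \cap V_2) = i$, which by Fact \ref{fact:poly}\,(4) is the polynomial
\[
N(n_1, n_2, i; p) \;=\; \binom{l_{k+1}}{n_1}_p \binom{n_1}{i}_p \binom{l_{k+1} - n_1}{n_2 - i}_p p^{(n_1 - i)(n_2 - i)}.
\]

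For the factor $h_k(n_1, n_2, m; p)$, I would argue exactly as in the proof of Proposition \ref{pro:subspace.intersection}: after passing to the quotient $L_k / (U_{k,1} \oplus U_{k,2})$, the count of extensions to an $m$-dimensional $I_k$ whose intersection with the images of $W_{k,1}$ and $W_{k,2}$ is trivial depends only on the dimensions $l_{k+1} - n_j$ and $d_k - n_1 - n_2$, not on the particular $U_{k,j}$'s. Concretely, in the notation of Proposition \ref{pro:subspace.intersection},
\[
h_k(n_1, n_2, m; p) \;=\; \frac{g(d_k,\, l_{k+1},\, l_{k+1},\, n_1,\, n_2,\, m)}{\binom{l_{k+1}}{n_1}_p \binom{l_{k+1}}{n_2}_p},
\]
which is manifestly a polynomial in $p$.

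The one subtlety to verify carefully is the independence of $h_k$ from the specific choice of $(U_{k,1}, U_{k,2})$; this is the only non-trivial step and is the main obstacle. It can be established either by reading it off from the inductive structure of the proof of Proposition \ref{pro:subspace.intersection}, or via a homogeneity argument using the subgroup of $\mathrm{GL}(L_k)$ preserving a decomposition $L_k = W_{k,1} \oplus W_{k,2} \oplus C$ (for any complement $C$), which acts transitively on pairs of subspaces of $W_{k,1} \oplus W_{k,2}$ of fixed dimensions $(n_1, n_2)$ lying in $W_{k,1} \oplus W_{k,2}$ with the prescribed intersection dimensions. Given this, the displayed sum is a finite sum of polynomials in $p$ over the range $0 \leq i \leq \min(n_1, n_2)$, $n_1 + n_2 \leq m$, $n_j \leq l_{k+1}$, and hence $f_k(m,i,p)$ is uniformly polynomial in $p$ as claimed.
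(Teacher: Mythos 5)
Your proof is correct and follows essentially the same approach as the paper: condition on the intersections of $I_k$ with $W_{k,1}$ and $W_{k,2}$, translate the collapse condition into an intersection condition inside $\Lambda_{k+1}$ via the isomorphisms $\phi_j|_{W_{k,j}}$, and reduce the count to Proposition~\ref{pro:subspace.intersection}. The only (cosmetic) difference is the parametrization: you sum over both $n_1=\dim(I_k\cap W_{k,1})$ and $n_2=\dim(I_k\cap W_{k,2})$ and pull the collapse condition into the prefactor $N(n_1,n_2,i;p)$, whereas the paper sums over a single parameter $j=\dim(I_k\cap W_{k,1})$, uses the correlated subspace $U_2=\phi_2^{-1}\phi_1(U_1)\leq W_{k,2}$ to express the collapse condition as $\dim(I_k\cap U_2)=i$, and then applies Proposition~\ref{pro:subspace.intersection} with first intersection equal to $0$ in the quotient $L_k/U_1$. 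Your transitivity/homogeneity argument for the independence of $h_k$ from the choice of $(U_{k,1},U_{k,2})$ is exactly the well-definedness fact implicit in the paper's reduction as well, so nothing is missing.
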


\begin{proof}
	
	We have described how a space $I_k$ of dimension $m$ in $L_k$ maps to a
	space of dimension $2m-i$ in $L_{k+1}$  We count $f_k(m,i,p)$ as follows:
	
	Fix $j$ so that $i\leq j \leq l_{k+1}$. For each such $j$ there are $\binom{l_{k+1}}{j}_{p}$ spaces $U_1$ of dimension $j$ in
	$W_{k,1}$. To each such space $U_1$ we can associate
	in a natural way a
	disjoint $j$-dimensional space
	\[
	U_2:= \phi_2^{-1}(\phi_1(U_1)) \leq W_{k,2}.
	\]
	We can write
	\[
	f_k(m,i,p) = \sum_{j=i}^{l_{k+1}}
	h_k(m,j,i,p)\binom{l_{k+1}}{j}_{p},
	\]
	where $h_k(m,j,i,p)$ is the number of $m$-dimensional spaces in $L_k$ that
	intersect $W_{k,1}$ in a fixed
	$j$-dimensional space and map to a $(2m-i)$-dimensional space in $L_{k+1}.$
	
	Now, to compute $h_k(m,j,i,p)$, we must count those $m$-dimensional spaces $I_{k}$ in $L_{k}$  which meet a $l_{k+1}$-dimensional space $W_{k,1}$ in $U_1$
	and meet the disjoint $j$-dimensional space $U_2$ in a space of
	dimension $i$. Note that this number is equal to the number of $m-i$-dimensional subspaces $I_{k}'$ in  $L_{k}/U_{1}$ which is disjoint to a $l_{k+1}-j$-dimensional space $W_{k,1}/U_1$ and intersect $j$-dimensional space $U_2+U_1/U_1$ in dimension $i$. By Proposition \ref{pro:subspace.intersection}, we know this is equal to  $g(d_k,l_{k+1}-j,j,0,i,m)$, which is uniformly given by a
	polynomial in $p$. Hence 
	\begin{align*}
		f_k(m,i,p)&=\sum_{j=i}^{l_{k+1}}
		h_k(m,j,i,p)\binom{l_{k+1}}{j}_{p}\\
		&=\sum_{j=i}^{l_{k+1}}g(d_k,l_{k+1}-j,j,0,i,m)\binom{l_{k+1}}{j}_{p}\\
		&=\sum_{j=i}^{l_{k+1}}\binom{l_{k+1}}{j}_{p}\binom{j}{i}_p\sum_{r=0}^{\min\{l_{k+1}-j,j-i,m-j-i\}}\left(p^{(l_{k+1}-i-r)(m-j-i-r)}\right.\\
		&\left.\cdot\binom{l_{k+1}-j}{r}_p\binom{j-i}{r}_p\binom{d_{k}-l_{k+1}-j}{m-j-i}_p\prod_{e=0}^{r-1}(p^r-p^e)\right)
	\end{align*}
	is also uniformly given by a polynomial in $p$.
\end{proof}

\begin{thm}\label{thm:2step.graded}
	For $1\leq k\leq c-2$, let
	\[\alpha_1(a,k,p)=\#\{I\ideal_{k,1} L\mid \dim(I_k)+\dim(I_{k+1})=a\}.\]
Then we have
	\[\alpha_1(a,k,p)=\sum_{m=0}^{d_k}\sum_{i=0}^{\min(m,l_{k+1})}f_k(m,i,p)\binom{d_{k+1}-(2m-i)}{a-m-(2m-i)}_{p}.\]
	In particular, $\alpha_1(a,k,p)$ is uniformly given by a polynomial in $p$.
\end{thm}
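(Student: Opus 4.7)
The plan is to stratify the one-step graded ideals counted by $\alpha_{1}(a,k,p)$ according to the lower layer $I_{k}\leq L_{k}$. By Definition \ref{def:nsg}, an ideal $I\ideal_{k,1}L$ has the form
\[
I=0\oplus\cdots\oplus 0\oplus I_{k}\oplus I_{k+1}\oplus L_{k+2}\oplus\cdots\oplus L_{c},
\]
and the only non-trivial constraint for $I$ to be a Lie ideal is $\phi(I_{k})\leq I_{k+1}$: the bracket $[L_{1},I_{k+1}]=\phi(I_{k+1})$ already lies in $L_{k+2}\subseteq I$, and all higher brackets land even further up the lower central series. Hence $\alpha_{1}(a,k,p)$ decomposes naturally as a sum over admissible pairs $(I_{k},I_{k+1})$.

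The key observation, already essential in the proof of Theorem \ref{thm:dim.col2.poly}, is that once the pair $(m,i)=(\dim I_{k},\dim_{col}(I_{k}))$ is fixed, the dimension of the image $\phi(I_{k})\leq L_{k+1}$ is determined. Using that $\phi_{1}$ and $\phi_{2}$ are injective on $L_{k}$ for $k\geq 2$ (Lemma \ref{lem:kernel} and the corollary immediately following it), we have
\[
\dim\phi(I_{k})=\dim\phi_{1}(I_{k})+\dim\phi_{2}(I_{k})-\dim(\phi_{1}(I_{k})\cap\phi_{2}(I_{k}))=2m-i.
\]
For any fixed $I_{k}$ in the stratum $(m,i)$, Fact \ref{fact:poly}(2) then says that the number of subspaces $I_{k+1}\leq L_{k+1}$ of dimension $a-m$ containing the $(2m-i)$-dimensional subspace $\phi(I_{k})$ equals
\[
\binom{d_{k+1}-(2m-i)}{(a-m)-(2m-i)}_{p},
\]
and crucially this count depends only on $(m,i)$, not on the particular $I_{k}$.

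Combining these two ingredients with $f_{k}(m,i,p)$ supplied by Theorem \ref{thm:dim.col2.poly} produces the claimed identity. The summation range is justified by $\dim_{col}(I_{k})\leq\min(\dim I_{k},\dim\Lambda_{k+1})=\min(m,l_{k+1})$ (the second inequality because $\phi_{1}(I_{k})\cap\phi_{2}(I_{k})\subseteq\Lambda_{k+1}$), and by the convention that a Gaussian binomial with out-of-range lower argument vanishes, which automatically discards pairs $(m,i)$ admitting no valid $I_{k+1}$. Uniform polynomiality of $\alpha_{1}(a,k,p)$ in $p$ then follows immediately: each $f_{k}(m,i,p)$ is uniformly polynomial by Theorem \ref{thm:dim.col2.poly}, each Gaussian binomial coefficient is a polynomial in $p$, and the outer sum has a bounded number of terms independent of $p$.

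The principal work for this theorem is really done inside Theorem \ref{thm:dim.col2.poly}, where $f_{k}(m,i,p)$ is shown to be polynomial via the analysis of how an $m$-dimensional subspace of $L_{k}$ can intersect the disjoint pair $W_{k,1},W_{k,2}$ with prescribed collapse. Granted that result, the present statement is essentially bookkeeping: one needs only to observe that the ideal condition cleanly decouples the choices of $I_{k}$ and $I_{k+1}$, and that $\dim_{col}(I_{k})$ is exactly the minimal extra datum about $I_{k}$ beyond its dimension needed to fix the enumeration of admissible $I_{k+1}$.
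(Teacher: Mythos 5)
Your proof is correct and follows essentially the same route as the paper: stratify by the pair $(m,i)=(\dim I_k,\dim_{col}(I_k))$, use that $\dim\phi(I_k)=2m-i$ is then determined, count the choices of $I_{k+1}\supseteq\phi(I_k)$ with a Gaussian binomial, and invoke Theorem \ref{thm:dim.col2.poly} for $f_k(m,i,p)$. Your extra remarks --- why the Lie-ideal condition reduces to the single constraint $\phi(I_k)\leq I_{k+1}$, why the summation over $i$ is bounded by $\min(m,l_{k+1})$, and the vanishing convention on Gaussian binomials --- are correct explications of what the paper leaves implicit rather than a different argument.
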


\begin{proof}
	First, for each $0\leq m\leq d_{k}$ and $0\leq i \leq\min(m,l_{k+1})$, $f_k(m,i,p)$ gives the number of $I_{k}\leq L_{k}$ such that $\dim(I_{k})=m$ and $\dim(\phi_{1}(I_k)+\phi_{2}(I_{k}))=2m-i$. For such fixed $I_k$, to have $I_{k}\oplus I_{k+1}\triangleleft L_{k}\oplus L_{k+1}$ we need $I_{k+1}$ to contain $\phi(I_{k})$. Hence we need to count the number of $a-m$-dimensional subspace $I_{k+1}$ which contains a $2m-i$-dimensional fixed subspace $\phi(I_{k})$ in a $d_{k+1}$-dimensional vector space $L_{k+1}$, and this is $\binom{d_{k+1}-(2m-i)}{a-m-(2m-i)}_{p}$. Therefore we get
	\[\alpha_1(a,k,p)=\sum_{m=0}^{d_k}\sum_{i=0}^{\min(m,l_{k+1})}f_k(m,i,p)\binom{d_{k+1}-(2m-i)}{a-m-(2m-i)}_{p}.\]
\end{proof}
\begin{exm}\label{exm:k=2}
	Let us compare Theorem \ref{thm:dim.col2.poly} and Theorem \ref{thm:2step.graded} with Theorem \ref{thm:cleq5}.
	
	Suppose we compute 
	\[\alpha_1(a,2,p)=\#\{(I_{2},I_{3})\mid I_{2}\oplus I_{3}\triangleleft L_{2}\oplus L_{3},\dim(I_2)+\dim(I_{3})=a\}.\]
	Then by Theorem \ref{thm:2step.graded} we have
	\[\alpha_1(a,2,p)=f_2(0,0,p)\binom{2}{a}_p+f_2(1,0,p)\binom{0}{a-3}_p,\] which gives
	\begin{align*}
		\alpha_1(0,2,p)&=f_2(0,0,p)=1,&\alpha_1(1,2,p)&=f_2(0,0,p)\binom{2}{1}_p=\binom{2}{1}_p\\
		\alpha_1(2,2,p)&=f_2(0,0,p)=1,&\alpha_1(3,2,p)&=f_2(1,0,p)=1,
	\end{align*}
	and $\alpha_1(a,2,p)=0$ for $a\geq4$.
	One can directly check this by comparing them with Theorem \ref{thm:cleq5}, since by definition we must have
	\begin{align*}
		\alpha_1(0,2,p)&=b_{0,0,0}(\mff_{3,2}(\Fp)),&\alpha_1(1,2,p)&=b_{0,0,1}(\mff_{3,2}(\Fp)),\\
		\alpha_1(2,2,p)&=b_{0,0,2}(\mff_{3,2}(\Fp)),&\alpha_1(3,2,p)&=b_{0,1,2}(\mff_{3,2}(\Fp)),	
	\end{align*}
	and it is impossible to have $\dim(I_2)+\dim(I_{3})\geq4$.
	
	Similarly, suppose we compute $\alpha_1(a,3,p)$. By Theorem \ref{thm:2step.graded} we have
	\begin{align*}
		\alpha_1(a,3,p)=&f_3(0,0,p)\binom{3}{a}_p+f_3(1,0,p)\binom{1}{a-3}_p+f_3(1,1,p)\binom{2}{a-2}_p\\
		&+f_3(2,0,p)\binom{-1}{a-6}_p+f_3(2,1,p)\binom{0}{a-5}_p.
	\end{align*}
	With Theorem \ref{thm:dim.col2.poly} one can compute that
	\begin{align*}
		f_3(0,0,p)&=1,&f_3(1,0,p)&=p+1,&f_3(1,1,p)&=0,\\
		f_3(2,0,p)&=0,&f_3(2,1,p)&=1,&&	
	\end{align*}
	and \begin{align*}
		\alpha_1(0,3,p)&=f_3(0,0,p)=1,&\alpha_1(1,3,p)&=f_3(0,0,p)\binom{3}{1}_p=\binom{3}{1}_p\\
		\alpha_1(2,3,p)&=f_3(0,0,p)\binom{3}{2}_p=\binom{3}{2}_p,&\alpha_1(3,3,p)&=f_3(0,0,p)+f_3(1,0,p)=p+2,\\
		\alpha_1(4,3,p)&=f_3(1,0,p)=p+1,&\alpha_1(5,3,p)&=f_3(2,1,p)=1,
	\end{align*}
	and $\alpha_1(a,3,p)=0$ for $a\geq6$. Again, one can directly check this by comparing them with Theorem \ref{thm:cleq5}, since by definition we must have
	\begin{align*}
		\alpha_1(0,3,p)&=b_{0,0,0,0}(\mff_{3,3}(\Fp)),&\alpha_1(1,3,p)&=b_{0,0,0,1}(\mff_{3,3}(\Fp)),\\
		\alpha_1(2,3,p)&=b_{0,0,0,2}(\mff_{3,3}(\Fp)),&\alpha_1(3,3,p)&=b_{0,0,0,3}(\mff_{3,3}(\Fp))+b_{0,0,1,2}(\mff_{3,3}(\Fp)),\\
		\alpha_1(4,3,p)&=b_{0,0,1,3}(\mff_{3,2}(\Fp)),&\alpha_1(5,3,p)&=b_{0,0,2,3}(\mff_{3,2}(\Fp)),	
	\end{align*}
	and it is impossible to have $\dim(I_{3})+\dim(I_{4})\geq6$.
\end{exm}
\begin{cor}\label{thm:2sg.zeta}
	For $1\leq k \leq c-1$, the zeta function counting one-step graded ideals
	\[\zeta_{\mff_{c,2}(\Fp)}^{\ideal_1}(s)=\sum_{k=1}^{c-1}\sum_{a=0}^{d_{k}+d_{k+1}}\alpha_1(a,k,p)t^{d_{k}+d_{k+1}-a}\]
	is uniformly given by a polynomial in $p$. In other words, $\zeta_{\mff_{c,2}}^{\ideal_1}(s)$ is $\Fp$-uniform.
\end{cor}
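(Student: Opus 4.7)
The plan is to derive this essentially directly from Theorem \ref{thm:2step.graded} combined with a straightforward unpacking of Definition \ref{def:nsg}. First I would make explicit that, by Definition \ref{def:nsg} with $n=1$, a one-step graded ideal of $L=\mff_{c,2}(\Fp)$ is determined by a choice of starting slot $k\in[c-1]$ together with a pair $(I_k,I_{k+1})$ with $I_k\leq L_k$, $I_{k+1}\leq L_{k+1}$ and $\phi(I_k)\leq I_{k+1}$; the remaining slots $L_{k+2},\ldots,L_c$ are filled trivially and therefore contribute nothing to the codimension. The contribution to $\zeta_{L}^{\ideal_1}(s)$ from such an ideal is exactly
\[
|L_k:I_k|^{-s}\,|L_{k+1}:I_{k+1}|^{-s}=t^{(d_k-\dim I_k)+(d_{k+1}-\dim I_{k+1})}.
\]

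Next I would group the resulting sum first by $k$, and then by the total interior dimension $a=\dim(I_k)+\dim(I_{k+1})\in\{0,1,\ldots,d_k+d_{k+1}\}$. By definition of $\alpha_1(a,k,p)$ in Theorem \ref{thm:2step.graded}, the number of pairs $(I_k,I_{k+1})$ with fixed $k$ and fixed total dimension $a$ is precisely $\alpha_1(a,k,p)$, and each such pair contributes $t^{d_k+d_{k+1}-a}$. This yields the closed form
\[
\zeta_{\mff_{c,2}(\Fp)}^{\ideal_1}(s)=\sum_{k=1}^{c-1}\sum_{a=0}^{d_k+d_{k+1}}\alpha_1(a,k,p)\,t^{d_k+d_{k+1}-a},
\]
which is exactly the identity asserted in the statement.

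Finally, I would invoke Theorem \ref{thm:2step.graded} to conclude that each $\alpha_1(a,k,p)$ is uniformly polynomial in $p$ for almost all primes (the condition $p\geq c$ used in Section \ref{sec:nsg} excludes only finitely many primes, uniformly across all $k\in[c-1]$). A finite $\Q$-linear combination of such polynomials in $p$, weighted by powers of $t$, is itself a polynomial in $p$ and $t$, establishing $\Fp$-uniformity of $\zeta_{\mff_{c,2}}^{\ideal_1}(s)$. There is no real obstacle remaining here; the substantive work was carried out in Theorems \ref{thm:dim.col2.poly} and \ref{thm:2step.graded}, whose polynomiality in turn rests on the structural fact (Theorem \ref{thm1} and the disjointness $W_{k,1}\cap W_{k,2}=\{0\}$) that the maps $\phi_1,\phi_2$ on $L_k$ are well-behaved independently of $p$ once $p\geq c$.
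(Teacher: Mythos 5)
Your derivation of the closed form $\zeta_{\mff_{c,2}(\Fp)}^{\ideal_1}(s)=\sum_{k=1}^{c-1}\sum_{a=0}^{d_k+d_{k+1}}\alpha_1(a,k,p)\,t^{d_k+d_{k+1}-a}$ from Definition \ref{def:nsg} is correct, and your grouping by the pair $(k,a)$ is exactly what the paper does. However, there is a gap in the final step. You invoke Theorem \ref{thm:2step.graded} to conclude that every $\alpha_1(a,k,p)$ for $k\in[c-1]$ is polynomial in $p$, but that theorem's conclusion (and its formula) rests on Theorem \ref{thm:dim.col2.poly}, which is stated and proved only for $1<k\leq c-1$. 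The reason for that restriction is structural, not cosmetic: the proof of Theorem \ref{thm:dim.col2.poly} uses the injectivity of $\phi_{1,k},\phi_{2,k}$ (so that $W_{k,1}=\phi_1^{-1}(\Lambda_{k+1})$ and $W_{k,2}=\phi_2^{-1}(\Lambda_{k+1})$ are well defined and disjoint), and this injectivity fails precisely at $k=1$, where $\phi_{i,1}(X_i)=0$ (see the Corollary following Lemma \ref{lem:kernel}). Your closing remark that the argument ``rests on \ldots the maps $\phi_1,\phi_2$ on $L_k$ [being] well-behaved independently of $p$'' is exactly where the $k=1$ case slips through: on $L_1$ the maps are not well behaved in the sense required.

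The paper closes this gap by treating $k=1$ separately: it computes $\alpha_1(a,1,p)$ directly from the explicit coefficients of $\zeta_{\mff_{2,2}(\Fp)}^{\idealgr}(s)$ in Theorem \ref{thm:cleq5} (namely $\alpha_1(0,1,p)=1$, $\alpha_1(1,1,p)=1$, $\alpha_1(2,1,p)=p+1$, $\alpha_1(3,1,p)=1$, and zero otherwise), and only then appeals to Theorem \ref{thm:2step.graded} for the remaining indices $k$. Your proof needs this explicit $k=1$ patch; once inserted, the rest of the argument goes through as you describe.
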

\begin{proof}
	First, note that when $k=1$, one can directly check that 
	\begin{align*}
		\alpha_1(0,1,p)&=1&\alpha_1(1,1,p)&=1&\alpha_1(2,1,p)&=p+1&\alpha_1(3,1,p)&=1
	\end{align*}
	and $\alpha_1(a,1,p)=0$ for $a>3$, since $\zeta_{\mff_{2,2}(\Fp)}^{\idealgr}(s)=1+\binom{2}{1}_{p}t+t^2+t^3=\sum_{a=0}^{\infty}\alpha_1(a,1,p)t^{3-a}$ (c.f. Theorem \ref{thm:cleq5}). The rest follows from Theorem \ref{thm:2step.graded}. In particular, this proves Theorem \ref{thm:1step.uniform}.
\end{proof}

One might hope to get a similar result for some 2-step, or  general $n$-step graded ideals. It turns out that for some good cases, we can also compute the 2-step graded zeta functions.
\begin{thm}\label{thm:3step.graded}
	Let
	\[\alpha_2(a,k,p)=\#\{I\ideal_{k,2} L\mid \dim(I_k)+\dim(I_{k+1})+\dim(I_{k+2})=a\}.\] 
	Then, for $k=1,3,5$ and $c\geq k+2$ we have
	\[\alpha_2(a,k,p)=\sum_{m=0}^{d_k}\sum_{i=0}^{l_{k+1}}f_k(m,i,p)\sum_{m'=2m-i}^{d_{k+1}}\binom{d_{k+1}-(2m-i)}{m'}_{p}\binom{d_{k+2}-2m'}{a-m-m'}_{p}.\]
	In particular, for those $k$, $\alpha_2(a,k,p)$ is uniformly given by a polynomial in $p$.
\end{thm}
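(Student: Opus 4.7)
The plan is to mimic the three-stage counting argument used in Theorem~\ref{thm:2step.graded}, now enumerating triples $(I_k, I_{k+1}, I_{k+2})$ subject to the two ideal conditions $\phi(I_k)\le I_{k+1}$ and $\phi(I_{k+1})\le I_{k+2}$. First I would fix $m=\dim I_k$ and $i=\dim_{col}(I_k)$; by Theorem~\ref{thm:dim.col2.poly} there are $f_k(m,i,p)$ choices of $I_k$, each yielding $\dim \phi(I_k)=2m-i$. Next, with $m'=\dim I_{k+1}$ ranging from $2m-i$ to $d_{k+1}$, the number of $I_{k+1}\le L_{k+1}$ of dimension $m'$ containing the fixed $(2m-i)$-dimensional subspace $\phi(I_k)$ is a Gaussian binomial coefficient, hence polynomial in $p$.

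The third stage is where the restriction $k\in\{1,3,5\}$ becomes essential. For the count of $I_{k+2}$ of dimension $a-m-m'$ containing $\phi(I_{k+1})$ to be a Gaussian binomial in $(m,i,m',a,p)$, I need $\dim \phi(I_{k+1})$ to depend only on $m'$, not on the internal structure of $I_{k+1}$. This happens precisely when $l_{k+2}=\dim \Lambda_{k+2}=0$, because then $\phi_1(I_{k+1})\cap \phi_2(I_{k+1})\le \Lambda_{k+2}=0$ forces $\dim_{col}(I_{k+1})=0$, so $\dim \phi(I_{k+1})=2m'$. Since $\phi_1,\phi_2$ are injective on $L_{k+1}$ for $k+1\ge 2$ (Corollary to Lemma~\ref{lem:kernel}) and $L_{k+2}=\phi_1(L_{k+1})+\phi_2(L_{k+1})$, a dimension count gives $l_{k+2}=2d_{k+1}-d_{k+2}$. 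Direct substitution into the Witt formula yields $d_3=2d_2=2$, $d_5=2d_4=6$ and $d_7=2d_6=18$, so $l_{k+2}=0$ for exactly $k\in\{1,3,5\}$; by contrast $d_9=56\neq 60=2d_8$, giving $l_9=4\neq 0$, which is the structural obstruction to extending the result to larger $k$.

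Having secured $\dim \phi(I_{k+1})=2m'$, the number of admissible $I_{k+2}$ is a Gaussian binomial coefficient in the integer parameters $d_{k+2},\,2m',\,a-m-m'$. Assembling the three counts and summing over $m,i,m'$ delivers the triple sum in the statement; polynomiality in $p$ then follows because $f_k(m,i,p)$ is polynomial by Theorem~\ref{thm:dim.col2.poly} and every Gaussian binomial with integer parameters independent of $p$ is polynomial in $p$. The main obstacle is really just the structural identity $l_{k+2}=0$ for $k\in\{1,3,5\}$; once this is in hand the counting is routine, though one must also handle the boundary case $k=1$ (where $\phi_{1,1}$ has a nontrivial kernel) by extending the definition of $f_k(m,i,p)$ consistently, which is a short direct enumeration giving e.g.\ $f_1(0,0,p)=1$, $f_1(1,0,p)=2$, $f_1(1,1,p)=p-1$, $f_1(2,1,p)=1$.
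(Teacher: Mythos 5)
Your overall plan matches the paper's own proof essentially step for step: fix $\dim I_k=m$ and use $f_k(m,i,p)$ to count the choices of $I_k$ with $\dim\phi(I_k)=2m-i$; then count $I_{k+1}\supseteq\phi(I_k)$ of dimension $m'$ by a Gaussian binomial; then observe that $\Lambda_{k+2}=0$ forces $\dim\phi(I_{k+1})=2m'$, reducing the third stage to a single Gaussian binomial; sum everything. That is precisely the argument in the paper. Your supporting observation that $l_{k+2}=2d_{k+1}-d_{k+2}$ (from injectivity of $\phi_i$ on $L_{k+1}$ together with $L_{k+2}=\phi_1(L_{k+1})+\phi_2(L_{k+1})$), and the Witt-number check $d_3=2d_2$, $d_5=2d_4$, $d_7=2d_6$ but $d_9\neq 2d_8$, is a genuinely useful addition: the paper only asserts $\Lambda_{k+2}=0$ for $k\in\{1,3,5\}$ without giving the justification.

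However, there is a concrete error in your handling of the boundary case $k=1$. The values you propose, $f_1(1,0,p)=2$, $f_1(1,1,p)=p-1$, $f_1(2,1,p)=1$, record the raw dimension of collapse $\dim(\phi_1(I_1)\cap\phi_2(I_1))$, but the identity $\dim\phi(I_k)=2\dim(I_k)-\dim_{col}(I_k)$ that the counting formula actually depends on \emph{fails} on $L_1$ precisely because $\phi_{1,1}$ and $\phi_{2,1}$ are not injective (they kill $X_1$ and $X_2$ respectively). For instance $I_1=\langle X_1\rangle$ has $\dim_{col}(I_1)=0$ but $\dim\phi(I_1)=1$, not $2$. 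Plugging your $f_1$-values into the formula reproduces $\alpha_1(2,1,p)=p-1$ rather than the correct $p+1$, so the ``consistent extension'' you describe does not make the theorem's formula compute the right thing. To make the formula work uniformly, $f_1(m,i,p)$ must be defined by the operational condition $\dim\phi(I_1)=2m-i$, which gives $f_1(0,0,p)=1$, $f_1(1,1,p)=p+1$, $f_1(2,3,p)=1$ and zero otherwise; note this also forces $i$ to range beyond $l_2=1$ in the sum, so the upper limit $l_{k+1}$ in the statement needs to be relaxed for $k=1$. The cleanest fix (and what the paper does in Corollary~\ref{thm:2sg.zeta} for the one-step case) is simply to treat $k=1$ by a short direct enumeration rather than trying to force it into the $f_k$ framework.
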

\begin{proof}
	First, note that as before,  for each $0\leq m\leq d_{k}$ and $0\leq i \leq l_{k+1}$, $f_k(m,i,p)$ gives the number of $I_{k}\leq L_{k}$ such that $\dim(I_{k})=m$ and $\dim(\phi_{1}(I_k)+\phi_{2}(I_{k}))=2m-i$. For such fixed $I_k$, to have $I_{k}\oplus I_{k+1}\oplus I_{k+2}\triangleleft L_{k}\oplus L_{k+1}\oplus L_{k+2}$ we need $I_{k+1}$ to contain $\phi(I_{k}):=\phi_{1}(I_{k})+\phi_{2}(I_{k})$. For each $m'$ where $\dim(\phi(I_{k}))=2m-i\leq m'\leq d_{k+1}$, there are exactly $\binom{d_{k+1}-(2m-i)}{m'}_{p}$ such $I_{k+1}$. 
	
	Now, the crucial part is that for $k=1,3,5$, $\Lambda_{k+2}=0$, which implies $\dim(\phi(I_{k+1}))=2\dim(I_{k+1})=2m'$. So for any fixed $m$ and $m'$, there are exactly $\binom{d_{k+2}-2m'}{a-m-m'}_{p}$ number of $a-m-m'$-dimensional subspace $I_{k+2}\leq L_{k+2}$ such that $I_{k+2}$ contains $2m'$-dimensional vector space $\phi(I_{k+1})$ and $\dim(I_k)+\dim(I_{k+1})+\dim(I_{k+2})=m+m'+\dim(I_{k+2})=a$.
\end{proof}
Furthermore, note that by looking at Theorem \ref{thm:cleq5} one can also check that $\alpha_2(a,2,p)$ is uniformly given by a polynomial in $p$ for $c\geq4$.

Interestingly, it turns out that $\alpha_2(a,4,p)$  is much more complicated than the ones we just computed.  In fact, this is where the $\Fp$-uniformity fails for $\mff_{6,2}$.

\begin{rem}
	Although here we specifically concentrated on the case where $L=\mff_{c,d}(\Fp)$, one can of course consider the general $n$-step graded ideal zeta function of $L$ defined in Definition \ref{def:nsg} as	
	\begin{align*}
		\zeta_{L}^{\idealn}(s):=\sum_{k=1}^{c-n}\sum_{I\idealkn L}|L_{k}:I_{k}|^{-s}\cdots|L_{k+n}:I_{k+n}|^{-s}.
	\end{align*}
In this most general setting $L$ does not even need to be an $\Fp$-Lie algebra. Can we also prove the analogous result that $\zeta_{\mff_{c,2}}^{\ideal_1}(s)$ is $\Zp$-uniform for all $c$? Or can we find other interesting Lie rings $L$ such that their $n$-step graded ideal is known to be $\Zp$-uniform or $\Fp$-uniform for any $n$? More examples would be useful to initiate a general study.
\end{rem}

\section{Counting graded ideals of $\mff_{c,2}(\Fp)$ for $c=6$.}\label{sec:c=6}

In this section, we prove that $\zeta_{\mff_{6,2}}^{\idealgr}(s)$ is not $\Fp$-uniform.

\begin{thm}\label{thm:c=6}
Let	
\[\zeta_{\mff_{6,2}(\Fp)}^{\idealgr}(s)=\sum_{i=0}^{23}a_{p^{i}}^{\idealgr}(\mff_{6,2}(\Fp))t^{i}.\]
For $p\geq5$, we have
\begin{align*}
	a_{p^{9}}^{\idealgr}(\mff_{6,2}(\Fp))=\begin{cases}
		p^6+3p^5+5p^4+6p^3+9p^2+10p+4&p\equiv3,5\mod8,\\
		p^6+3p^5+5p^4+6p^3+9p^2+8p+4&p\equiv1,7\mod8,
	\end{cases}
\end{align*}
In particular, $\zeta_{\mff_{6,2}(\Fp)}^{\idealgr}(s)$ is not $\Fp$-uniform.
\end{thm}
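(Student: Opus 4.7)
I would prove Theorem \ref{thm:c=6} by an explicit but finite computation, writing
\[
a_{p^9}^{\idealgr}(\mff_{6,2}(\Fp)) = \sum_{\bsm}\, b_{\bsm}(\mff_{6,2}(\Fp))
\]
over dimension vectors $\bsm = (m_1,\dots,m_6)$ with $m_k \leq d_k = (2,1,2,3,6,9)$ and $\sum m_k = 14$. The admissibility conditions $\phi(I_k) \leq I_{k+1}$ restrict $\bsm$ to a short enumerable list. The dimension identity $\dim \Lambda_{k+1} = 2 d_k - d_{k+1}$ (valid for $k \geq 2$) shows that the bracket has non-trivial collapse only at $\phi : L_3 \to L_4$ (where $\dim \Lambda_4 = 1$) and at $\phi : L_5 \to L_6$ (where $\dim \Lambda_6 = 3$). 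In the former transition, $L_3$ is $2$-dimensional and $W_{3,1} \cap W_{3,2} = 0$ forces the collapse dimension of $I_3$ to depend only on $\dim I_3$, in the spirit of Lemma \ref{lem:dimension}; so the top four layers contribute only polynomial-in-$p$ Gaussian-binomial factors, and any non-uniformity must originate from $\phi : L_5 \to L_6$.

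For each viable $\bsm$ with $0 < m_5 < 6$, I would expand
\[
b_{\bsm} = \sum_{I_4}\,\sum_j\, N(\phi(I_4),\, m_5,\, j)\,\binom{d_6 - (2 m_5 - j)}{m_6 - (2m_5 - j)}_p,
\]
where $I_4$ ranges over subspaces of $L_4$ of dimension $m_4$ containing $\phi(I_3)$, and $N(V, m, j)$ counts subspaces $I_5 \leq L_5$ of dimension $m$ with $I_5 \supseteq V$ and $\dim_{col}(I_5) = j$. For $V = 0$ (i.e., $m_4 = 0$), $N$ reduces to $f_5(m, j, p)$, which is polynomial in $p$ by Theorem \ref{thm:dim.col2.poly}. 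For $V \neq 0$, one must stratify by the type of $V$ relative to the canonical splitting $L_5 = W_{5,1} \oplus W_{5,2}$ and the isomorphism $\psi := \phi_2^{-1}\phi_1 : W_{5,1} \to W_{5,2}$ established in Section \ref{sec:nsg}.

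The main step is to isolate the specific dimension triple $(m_4, m_5, m_6)$ for which the stratified count above fails to be polynomial. In a coordinate system adapted to $L_5 = W_{5,1} \oplus W_{5,2}$, the collapse condition $\phi_1(v_1) = \phi_2(v_2)$ for $v_1, v_2 \in I_5$ becomes a system of linear equations coupling the $W_{5,1}$- and $W_{5,2}$-components of a candidate $I_5$ through $\psi$. I anticipate that for one specific stratum the resulting count reduces to enumerating the $\Fp$-points of a quadric in an affine chart of the relevant Grassmannian, equivalently to counting isotropic vectors for a symmetric $\Fp$-bilinear form constructed from the structure constants of $\phi : L_5 \to L_6$. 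The existence of such isotropic vectors is governed by whether a specific element of $\Fp$ (most naturally $2$, given the observed mod-$8$ dichotomy) is a square, producing exactly the split $\{p \equiv \pm 1\} \sqcup \{p \equiv \pm 3\} \pmod 8$ claimed in the statement.

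The main obstacle is carrying out this identification in concrete coordinates: fixing a Hall basis of $\mff_{6,2}$, writing $\phi_1, \phi_2$ in matrix form, determining $W_{5,1}, W_{5,2}, \psi$ and $\Lambda_6$ explicitly, and pinpointing the quadratic form whose $\Fp$-point count drives the non-polynomial behaviour. Once the non-polynomial stratum is computed, the remaining summands can be evaluated directly using Fact \ref{fact:poly} and Proposition \ref{pro:subspace.intersection}; summing all contributions yields the two stated formulae, which differ by $2p$ and thereby exhibit the failure of $\Fp$-uniformity.
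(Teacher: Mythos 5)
Your proposal takes essentially the same route as the paper: decompose $a_{p^9}^{\idealgr}$ over dimension vectors $\bsm$ with $\sum m_k = 14$, use the admissibility constraint $\phi(I_k)\leq I_{k+1}$ to shorten the list, observe via $\dim\Lambda_{k+1}=2d_k-d_{k+1}$ that genuine collapse occurs only at $\phi:L_3\to L_4$ and $\phi:L_5\to L_6$, and pinpoint the latter as the source of non-uniformity. You also correctly anticipate the mechanism of the mod-$8$ split: a quadratic condition whose solvability is governed by the Legendre symbol of $2$. This all matches what the paper does. In the paper the five candidate vectors are $(0,0,0,2,4,8)$, $(0,0,0,2,5,7)$, $(0,0,0,1,4,9)$, $(0,0,0,1,5,8)$, $(0,0,0,0,5,9)$; only $b_{0,0,0,1,5,8}$ is non-polynomial, and the dichotomy enters through the discriminant $2a_2^2-2a_2+1$ of a quadratic that arises after eliminating variables from the rank conditions on the matrix representing $\phi(I_5)$.

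The gap is that the decisive computation is only described, not carried out, and you flag this yourself as ``the main obstacle.'' The paper does this step by brute force: it fixes a GAP-computed basis of $\mff_{6,2}$, writes $\phi_1,\phi_2$ as explicit matrices, enumerates the $\phi(I_4)$-containing subspaces $I_5$ by RREF representatives, and reduces rank conditions on $\phi(I_5)$ to a polynomial system. Two small inaccuracies in your sketch are worth noting. First, the collapse dimension at the $L_4\to L_6$ composite (i.e.\ $\rk\,\phi(\phi(I_4))$) does depend on the specific $I_4$, not just on $\dim I_4$ --- the paper finds rank $7$ for $I_4$ with $a_3=b_3\in\{0,-1\}$ and rank $8$ otherwise --- so ``the top four layers contribute only Gaussian-binomial factors'' is a little too quick; it happens that these stratified counts still sum to polynomials, but this must be checked. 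Second, the paper does not organize the calculation around the splitting $L_5 = W_{5,1}\oplus W_{5,2}$ and the isomorphism $\psi = \phi_2^{-1}\phi_1$, nor does the quadratic-residue condition appear as a symmetric bilinear form on a Grassmannian chart; it emerges as the discriminant of a degree-two polynomial obtained after substitution in a system of rank conditions. Your proposed cleaner framing is plausible but would need to be verified to actually reproduce the $p-2$ versus $p-4$ count; as written, the proposal asserts the answer rather than derives it, so it does not yet constitute a proof.
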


Again, throughout this section we write $L=\mff_{6,2}(\Fp)$. To prove this, we need to choose a basis for $L$.  In Table~\ref{tab:basis} we record the set of basis we chose for $L=\mff_{6,2}$ with their relations. This particular set of basis is achieved by GAP \cite{GAP4} using LieRing package \cite{LieRing242}.
\begin{table}\label{tab:basis} 
	\centering 
	\caption{A basis of $\mff_{c,2}$ for $c\leq6$}
	\begin{tabular}{|c|c|c|c|} \hline
		$k$& Basis for $L_{k}$ & Notation & Jacobi Relation   \\ \hline
		$k=1$&$X_1$ & $X_{1,1}$ &    \\
		&$X_2$ & $X_{1,2}$ &    \\ \hline
		$k=2$&$[X_1,X_2]$ & $X_{2,1}$ &      \\ \hline
		$k=3$&$[X_1,[X_1,X_2]]$ & $X_{3,1}$ & $\phi_{1}(X_{2,1})$    \\
		&$[X_2,[X_1,X_2]]$ & $X_{3,2}$ & $\phi_{2}(X_{2,1})$    \\ \hline
		$k=4$&$[X_1,[X_1,[X_1,X_2]]]$ & $X_{4,1}$ & $\phi_{1}(X_{3,1})$    \\
		&$[X_2,[X_1,[X_1,X_2]]]$ & $X_{4,2}$ & $\phi_{2}(X_{3,1})=\phi_{1}(X_{3,2})$    \\
		&$[X_2,[X_2,[X_1,X_2]]]$ & $X_{4,3}$ & $\phi_{2}(X_{3,2})$    \\ \hline
		$k=5$&$[X_1,[X_1,[X_1,[X_1,X_2]]]]$ & $X_{5,1}$ & $\phi_{1}(X_{4,1})$    \\
		&$[X_1,[X_2,[X_1,[X_1,X_2]]]]$ & $X_{5,2}$ & $\phi_{1}(X_{4,2})$   \\
		&$[X_1,[X_2,[X_2,[X_1,X_2]]]]$ & $X_{5,3}$ & $\phi_{1}(X_{4,3})$   \\
		&$[X_2,[X_1,[X_1,[X_1,X_2]]]]$ & $X_{5,4}$ & $\phi_{2}(X_{4,1})$   \\ 
		&$[X_2,[X_2,[X_1,[X_1,X_2]]]]$ & $X_{5,5}$ & $\phi_{2}(X_{4,2})$   \\
		&$[X_2,[X_2,[X_2,[X_1,X_2]]]]$ & $X_{5,6}$ & $\phi_{2}(X_{4,3})$   \\ \hline
		$k=6$&$[X_1,[X_1,[X_1,[X_1,[X_1,X_2]]]]]$ & $X_{6,1}$ & $\phi_{1}(X_{5,1})$   \\
		&$[X_1,[X_1,[X_2,[X_1,[X_1,X_2]]]]]$ & $X_{6,2}$ & $\phi_{1}(X_{5,2})=2X_{6,4}-\phi_2(X_{5,1})$    \\
		&$[X_1,[X_1,[X_2,[X_2,[X_1,X_2]]]]]$ & $X_{6,3}$ & $\phi_{1}(X_{5,3})=\phi_2(X_{5,4})+3X_{6,5}-3X_{6,7}$    \\
		&$[X_1,[X_2,[X_1,[X_1,[X_1,X_2]]]]]$ & $X_{6,4}$ & $\phi_{1}(X_{5,4})$    \\
		&$[X_1,[X_2,[X_2,[X_1,[X_1,X_2]]]]]$ & $X_{6,5}$ & $\phi_{1}(X_{5,5})$    \\ 
		&$[X_1,[X_2,[X_2,[X_2,[X_1,X_2]]]]]$ & $X_{6,6}$ & $\phi_{1}(X_{5,6})=2X_{6,8}-\phi_2(X_{5,5})$   \\
		&$[X_2,[X_1,[X_2,[X_1,[X_1,X_2]]]]]$ & $X_{6,7}$ & $\phi_{2}(X_{5,2})$    \\
		&$[X_2,[X_1,[X_2,[X_2,[X_1,X_2]]]]]$ & $X_{6,8}$ & $\phi_{2}(X_{5,3})$    \\ 
		&$[X_2,[X_2,[X_2,[X_2,[X_1,X_2]]]]]$ & $X_{6,9}$ & $\phi_{2}(X_{5,6})$    \\ \hline
	\end{tabular}
\end{table}
Note that $L_1=\langle X_{1,1},X_{1,2}\rangle$, $L_2=\langle X_{2,1}\rangle$, $L_3=\langle X_{3,1},X_{3,2}\rangle$, $L_4=\langle X_{4,1},X_{4,2}, X_{4,3}\rangle$, $L_5=\langle X_{5,1},\ldots, X_{5,6}\rangle$, and $L_6=\langle X_{6,1},\ldots, X_{6,9}\rangle$.

We prove Theorem \ref{thm:c=6} in several steps by considering individual cases.

\begin{proof}
To compute $a_{p^{9}}^{\idealgr}(L)$, we need to count all the graded ideals of dimension $23-9=14$ in $L$. Write
\[b_{\boldsymbol{m}}(L)=\{I\idealgr L\mid \dim(I_1)=m_1,\ldots,\dim(I_6)=m_6\}.\]
By the same argument used in Theorem \ref{thm:cleq5}, the potential candidates contributing to $a_{p^{9}}^{\idealgr}(L)$ are
\begin{align*}
	b_{0,0,0,2,4,8}(L),&&b_{0,0,0,2,5,7}(L),&&b_{0,0,0,1,4,9}(L),\\
	b_{0,0,0,1,5,8}(L),&&b_{0,0,0,0,5,9}(L).&&	
\end{align*}
First, note that there are $\binom{3}{1}_p=p^2+p+1$ 1-dimensional subspaces $I_4$ in $L_4$, and any such $I_4$ would induce a 2-dimensional $\phi(I_4)$ in a 6-dimensional vector space $L_5$. Hence for each fixed 1-dimensional $I_4$, the number of 4-dimensional subspaces $I_5$ in $L_5$ that contains $\phi(I_4)$ is $\binom{6-2}{4-2}_p=\binom{4}{2}_p=p^4+p^3+2p^2+p+1$. Since there is only one 9-dimensional subspace in $L_6$, namely $L_6$ itself, and it clearly contains $\phi(I_5)$ for any 4-dimensional $I_5$, we have
\[b_{0,0,0,1,4,9}(L)=\binom{3}{1}_p\binom{4}{2}_p=p^6+2p^5+4p^4+4p^3+4p^2+2p+1.\]

By the same argument, one can also easily see that
\[b_{0,0,0,0,5,9}(L)=\binom{6}{5}_p=p^5+p^4+p^3+p^2+p^1+1.\]

Hence we only need to compute $b_{0,0,0,2,4,8}(L)$, $b_{0,0,0,2,5,7}(L)$, and $b_{0,0,0,1,5,8}(L)$. We start with $b_{0,0,0,2,4,8}(L)$ and $b_{0,0,0,2,5,7}(L)$.

\subsection{Case 1: $\dim(I_4)=2$}
Let us fix a 2-dimensional subspace $I_4$ of $L_4=\langle X_{4,1},X_{4,2},X_{4,3} \rangle$. In this case we have
\[I_4=\begin{pmatrix}
	1&0&a_3\\
	0&1&b_3
\end{pmatrix},
\begin{pmatrix}
	1&a_2&0\\
	0&0&1
\end{pmatrix},
\begin{pmatrix}
	0&1&0\\
	0&0&1
\end{pmatrix}\]
where $a_2,a_3,b_3\in\Fp$, which matches with the fact that there are $\binom{3}{2}_p=p^2+p+1$ 2-dimensional $I_4$ in $L_4$. We separate our cases accordingly.
\subsubsection{Case 1-1: $I_4=\begin{pmatrix}
		1&0&a_3\\
		0&1&b_3
	\end{pmatrix}$}\label{subsubsec:case3-1}

First, note that there are $p^2$ such $I_4$ in $L_4$. Recall Table \ref{tab:basis}. If we choose a general element  $a=a_1X_{4,1}+a_2X_{4,2}+a_3X_{4,3}\in L_4$, where $a\neq0$, then we have 
\begin{align*}
	\phi_{1}(a)&=a_1X_{5,1}+a_2X_{5,2}+a_3X_{5,3},\\
	\phi_{2}(a)&=a_1X_{5,4}+a_2X_{5,5}+a_3X_{5,6}, 
\end{align*}
and
\begin{align*}
	\phi_{1}\phi_{1}(a)&=a_1X_{6,1}+a_2X_{6,2}+a_3X_{6,3},\\
	\phi_{1}\phi_{2}(a)&=a_1X_{6,4}+a_2X_{6,5}+a_3X_{6,6},\\
	\phi_{2}\phi_{1}(a)&=-a_1X_{6,2}+2a_1X_{6,4}+a_2X_{6,7}+a_3X_{6,8},\\
	\phi_{2}\phi_{2}(a)&=a_1X_{6,3}-3a_1X_{6,5}-a_2X_{6,6}+(3a_1+2a_2)X_{6,8}+a_3X_{6,9}. 
\end{align*}
Therefore we have
\begin{align*}\phi(I_4)&=\begin{pmatrix}
		1 & 0 & a_3 & 0 & 0 & 0 \\
		0 & 1 & b_3 & 0 & 0 & 0 \\
		0 & 0 & 0 & 1 & 0 & a_3 \\
		0 & 0 & 0 & 0 & 1 & b_3 
	\end{pmatrix}
\end{align*}
and 
\begin{align*}\phi(\phi(I_4))&=\begin{pmatrix}
		1 & 0 & a_3 & 0 & 0 & 0 & 0 & 0 & 0 \\
		0 & 1 & b_3 & 0 & 0 & 0 & 0 & 0 & 0 \\
		0 & 0 & 0 & 1 & 0 & a_3 & 0 & 0 & 0 \\
		0 & 0 & 0 & 0 & 1 & b_3 & 0 & 0 & 0 \\
		0 & -1 & 0 & 2 & 0 & 0 & 0 & a_3 & 0 \\
		0 & 0 & 0 & 0 & 0 & 0 & 1 & b_3 & 0 \\
		0 & 0 & 1 & 0 & -3 & 0 & 0 & 3 & a_3 \\
		0 & 0 & 0 & 0 & 0 & -1 & 0 & 2 & b_3 
	\end{pmatrix}.
\end{align*}
One can directly check that here $\phi(I_4)$ always has rank 4 for any choice of $a_3,b_3\in\Fp$, and $\phi(\phi(I_4))$ has
\begin{itemize}
\item rank 7 if $a_3=b_3=0$ or $a_3=b_3=-1$, and
\item rank 8 otherwise.
\end{itemize}
Let $b_{\bfm}^{1,1}(L)$ denote the number of graded ideal $I\idealgr L$ of dimension $\bfm$ where 
\[I_4=\begin{pmatrix}
	1&0&a_3\\
	0&1&b_3
\end{pmatrix}.\] Our computation shows that 
\begin{align*}
	b_{0,0,0,2,4,7}^{1,1}&=2,\\
	b_{0,0,0,2,4,8}^{1,1}&=(p^2-2)+b_{0,0,0,2,4,7}^{1,1}\cdot\binom{9-7}{8-7}_p=p^2+2p.
\end{align*}

Now suppose $\dim(I_5)=5$. Since $\phi(I_4)\leq I_5$, we have
\begin{align*}
	I_5=\begin{pmatrix}
		1 & 0 & a_3 & 0 & 0 & 0 \\
		0 & 1 & b_3 & 0 & 0 & 0 \\
		0 & 0 & 0 & 1 & 0 & a_3 \\
		0 & 0 & 0 & 0 & 1 & b_3 \\
		0 & 0 & 1 & 0 & 0 & c_6 
	\end{pmatrix}\textrm{ or }
	\begin{pmatrix}
		1 & 0 & a_3 & 0 & 0 & 0 \\
		0 & 1 & b_3 & 0 & 0 & 0 \\
		0 & 0 & 0 & 1 & 0 & 0 \\
		0 & 0 & 0 & 0 & 1 & 0 \\
		0 & 0 & 0 & 0 & 0 & 1 
	\end{pmatrix},
\end{align*}
giving
 \begin{align*}
 	\phi(I_5)=\begin{pmatrix}
 			1 & 0 & a_3 & 0 & 0 & 0 & 0 & 0 & 0 \\
 			0 & 1 & b_3 & 0 & 0 & 0 & 0 & 0 & 0 \\
 			0 & 0 & 0 & 1 & 0 & a_3 & 0 & 0 & 0 \\
 			0 & 0 & 0 & 0 & 1 & b_3 & 0 & 0 & 0 \\
 			0 & 0 & 1 & 0 & 0 & c_6 & 0 & 0 & 0 \\
 			0 & -1 & 0 & 2 & 0 & 0 & 0 & a_3 & 0 \\
 			0 & 0 & 0 & 0 & 0 & 0 & 1 & b_3 & 0 \\
 			0 & 0 & 1 & 0 & -3 & 0 & 0 & 3 & a_3 \\
 			0 & 0 & 0 & 0 & 0 & -1 & 0 & 2 & b_3 \\
 			0 & 0 & 0 & 0 & 0 & 0 & 0 & 1 & c_6 
 	\end{pmatrix}\textrm{ or }
 	\begin{pmatrix}
 			1 & 0 & a_3 & 0 & 0 & 0 & 0 & 0 & 0 \\
 			0 & 1 & b_3 & 0 & 0 & 0 & 0 & 0 & 0 \\
 			0 & 0 & 0 & 1 & 0 & 0 & 0 & 0 & 0 \\
 			0 & 0 & 0 & 0 & 1 & 0 & 0 & 0 & 0 \\
 			0 & 0 & 0 & 0 & 0 & 1 & 0 & 0 & 0 \\
 			0 & -1 & 0 & 2 & 0 & 0 & 0 & a_3 & 0 \\
 			0 & 0 & 0 & 0 & 0 & 0 & 1 & b_3 & 0 \\
 			0 & 0 & 1 & 0 & -3 & 0 & 0 & 3 & 0 \\
 			0 & 0 & 0 & 0 & 0 & -1 & 0 & 2 & 0 \\
 			0 & 0 & 0 & 0 & 0 & 0 & 0 & 0 & 1 
 	\end{pmatrix},
 \end{align*}
where $c_6\in\Fp$. One can check that none of $\phi(I_5)$ has rank $7$. Hence $b_{0,0,0,2,5,7}^{1,1}(L)=0$.

\subsubsection{Case 1-2: $I_4=\begin{pmatrix}
		1&a_2&0\\
		0&0&1
	\end{pmatrix}$}\label{subsubsec:case3-2}

First, note that there are $p$ such $I_4$ in $L_4$. Let $b_{\bfm}^{1,2}(L)$ denote the number of graded ideal $I\idealgr L$ of dimension $\bfm$ where \[I_4=\begin{pmatrix}
	1&a2&0\\
	0&0&1
\end{pmatrix}.\] Since
\begin{align*}\phi(I_4)&=\begin{pmatrix}
		1 & a_2 & 0 & 0 & 0 & 0 \\
		0 & 0 & 1 & 0 & 0 & 0 \\
		0 & 0 & 0 & 1 & a_2 & 0 \\
		0 & 0 & 0 & 0 & 0 & 1 
	\end{pmatrix}
\end{align*}
always has rank 4, and 
\begin{align*}\phi(\phi(I_4))&=\begin{pmatrix}
		1 & a_2 & 0 & 0 & 0 & 0 & 0 & 0 & 0 \\
		0 & 0 & 1 & 0 & 0 & 0 & 0 & 0 & 0 \\
		0 & 0 & 0 & 1 & a_2 & 0 & 0 & 0 & 0 \\
		0 & 0 & 0 & 0 & 0 & 1 & 0 & 0 & 0 \\
		0 & -1 & 0 & 2 & 0 & 0 & a_2 & 0 & 0 \\
		0 & 0 & 0 & 0 & 0 & 0 & 0 & 1 & 0 \\
		0 & 0 & 1 & 0 & -3 & -a_2 & 0 & 2 a_2+3 & 0 \\
		0 & 0 & 0 & 0 & 0 & 0 & 0 & 0 & 1 
	\end{pmatrix}
\end{align*}
always has rank 8,  we have
\begin{align*}
	b_{0,0,0,2,4,8}^{1,2}(L)=p.
\end{align*}

Now suppose $\dim(I_5)=5$. We have
\begin{align*}
	I_5=\begin{pmatrix}
		1 & a_2 & 0 & 0 & 0 & 0 \\
		0 & 0 & 1 & 0 & 0 & 0 \\
		0 & 0 & 0 & 1 & a_2 & 0 \\
		0 & 0 & 0 & 0 & 0 & 1 \\
		0 & 1 & 0 & 0 & c_5 & 0 
	\end{pmatrix}\textrm{ or }
	\begin{pmatrix}
		1 & a_2 & 0 & 0 & 0 & 0 \\
		0 & 0 & 1 & 0 & 0 & 0 \\
		0 & 0 & 0 & 1 & 0 & 0 \\
		0 & 0 & 0 & 0 & 0 & 1 \\
		0 & 0 & 0 & 0 & 1 & 0 
	\end{pmatrix},
\end{align*} giving
\begin{align*}
\phi(I_5)=\begin{pmatrix}
		1 & a_2 & 0 & 0 & 0 & 0 & 0 & 0 & 0 \\
		0 & 0 & 1 & 0 & 0 & 0 & 0 & 0 & 0 \\
		0 & 0 & 0 & 1 & a_2 & 0 & 0 & 0 & 0 \\
		0 & 0 & 0 & 0 & 0 & 1 & 0 & 0 & 0 \\
		0 & 1 & 0 & 0 & c_5 & 0 & 0 & 0 & 0 \\
		0 & -1 & 0 & 2 & 0 & 0 & a_2 & 0 & 0 \\
		0 & 0 & 0 & 0 & 0 & 0 & 0 & 1 & 0 \\
		0 & 0 & 1 & 0 & -3 & -a_2 & 0 & 2 a_2+3 & 0 \\
		0 & 0 & 0 & 0 & 0 & 0 & 0 & 0 & 1 \\
		0 & 0 & 0 & 0 & 0 & -c_5 & 1 & 2 c_5 & 0 
\end{pmatrix}\textrm{ or }
\begin{pmatrix}
		1 & a_2 & 0 & 0 & 0 & 0 & 0 & 0 & 0 \\
		0 & 0 & 1 & 0 & 0 & 0 & 0 & 0 & 0 \\
		0 & 0 & 0 & 1 & a_2 & 0 & 0 & 0 & 0 \\
		0 & 0 & 0 & 0 & 0 & 1 & 0 & 0 & 0 \\
		0 & 0 & 0 & 0 & 1 & 0 & 0 & 0 & 0 \\
		0 & -1 & 0 & 2 & 0 & 0 & a_2 & 0 & 0 \\
		0 & 0 & 0 & 0 & 0 & 0 & 0 & 1 & 0 \\
		0 & 0 & 1 & 0 & -3 & -a_2 & 0 & 2 a_2+3 & 0 \\
		0 & 0 & 0 & 0 & 0 & 0 & 0 & 0 & 1 \\
		0 & 0 & 0 & 0 & 0 & -1 & 0 & 2 & 0 
\end{pmatrix},
\end{align*}
where $c_5\in\Fp$. Again none of $\phi(I_5)$ has rank $7$. Hence $b_{0,0,0,2,5,7}^{1,2}(L)=0$.

\subsubsection{Case 1-3: $I_4=\begin{pmatrix}
		0&1&0\\
		0&0&1
	\end{pmatrix}$}\label{subsubsec:case3-3}

First, note that there are exactly 1 such $I_4$ in $L_4$. Let $b_{\bfm}^{1,3}(L)$ denote the number of graded ideal $I\idealgr L$ of dimension $\bfm$ where \[I_4=\begin{pmatrix}
	0&1&0\\
	0&0&1
\end{pmatrix}.\] Since 
\begin{align*}\phi(I_4)&=\begin{pmatrix}
		0 & 1 & 0 & 0 & 0 & 0 \\
		0 & 0 & 1 & 0 & 0 & 0 \\
		0 & 0 & 0 & 0 & 1 & 0 \\
		0 & 0 & 0 & 0 & 0 & 1 
	\end{pmatrix}
\end{align*}
always has rank 4, and 
\begin{align*}\phi(\phi(I_4))&=\begin{pmatrix}
		0 & 1 & 0 & 0 & 0 & 0 & 0 & 0 & 0 \\
		0 & 0 & 1 & 0 & 0 & 0 & 0 & 0 & 0 \\
		0 & 0 & 0 & 0 & 1 & 0 & 0 & 0 & 0 \\
		0 & 0 & 0 & 0 & 0 & 1 & 0 & 0 & 0 \\
		0 & 0 & 0 & 0 & 0 & 0 & 1 & 0 & 0 \\
		0 & 0 & 0 & 0 & 0 & 0 & 0 & 1 & 0 \\
		0 & 0 & 0 & 0 & 0 & -1 & 0 & 2 & 0 \\
		0 & 0 & 0 & 0 & 0 & 0 & 0 & 0 & 1 
	\end{pmatrix}
\end{align*}
always has rank 7,  we have
\begin{align*}
	b_{0,0,0,2,4,7}^{1,3}(L)&=1,\\
	b_{0,0,0,2,4,8}^{3,3}(L)&=b_{0,0,0,2,4,7}^{1,3}(L)\cdot\binom{2}{1}_p=p+1.
\end{align*}

Now suppose $\dim(I_5)=5$. We have
\begin{align*}
	I_5=\begin{pmatrix}
		0 & 1 & 0 & 0 & 0 & 0 \\
		0 & 0 & 1 & 0 & 0 & 0 \\
		0 & 0 & 0 & 0 & 1 & 0 \\
		0 & 0 & 0 & 0 & 0 & 1 \\
		1 & 0 & 0 & c_4 & 0 & 0 
	\end{pmatrix}\textrm{ or }
	\begin{pmatrix}
		0 & 1 & 0 & 0 & 0 & 0 \\
		0 & 0 & 1 & 0 & 0 & 0 \\
		0 & 0 & 0 & 0 & 1 & 0 \\
		0 & 0 & 0 & 0 & 0 & 1 \\
		0 & 0 & 0 & 1 & 0 & 0 
	\end{pmatrix},
\end{align*}giving
\begin{align*}
\phi(I_5)=\begin{pmatrix}
	0 & 1 & 0 & 0 & 0 & 0 & 0 & 0 & 0 \\
	0 & 0 & 1 & 0 & 0 & 0 & 0 & 0 & 0 \\
	0 & 0 & 0 & 0 & 1 & 0 & 0 & 0 & 0 \\
	0 & 0 & 0 & 0 & 0 & 1 & 0 & 0 & 0 \\
	1 & 0 & 0 & c_4 & 0 & 0 & 0 & 0 & 0 \\
	0 & 0 & 0 & 0 & 0 & 0 & 1 & 0 & 0 \\
	0 & 0 & 0 & 0 & 0 & 0 & 0 & 1 & 0 \\
	0 & 0 & 0 & 0 & 0 & -1 & 0 & 2 & 0 \\
	0 & 0 & 0 & 0 & 0 & 0 & 0 & 0 & 1 \\
	0 & -1 & c_4 & 2 & -3 c_4 & 0 & 0 & 3 c_4 & 0 
\end{pmatrix}\textrm{ or }
\begin{pmatrix}
		0 & 1 & 0 & 0 & 0 & 0 & 0 & 0 & 0 \\
		0 & 0 & 1 & 0 & 0 & 0 & 0 & 0 & 0 \\
		0 & 0 & 0 & 0 & 1 & 0 & 0 & 0 & 0 \\
		0 & 0 & 0 & 0 & 0 & 1 & 0 & 0 & 0 \\
		0 & 0 & 0 & 1 & 0 & 0 & 0 & 0 & 0 \\
		0 & 0 & 0 & 0 & 0 & 0 & 1 & 0 & 0 \\
		0 & 0 & 0 & 0 & 0 & 0 & 0 & 1 & 0 \\
		0 & 0 & 0 & 0 & 0 & -1 & 0 & 2 & 0 \\
		0 & 0 & 0 & 0 & 0 & 0 & 0 & 0 & 1 \\
		0 & 0 & 1 & 0 & -3 & 0 & 0 & 3 & 0 
\end{pmatrix},
\end{align*}
where $c_4\in\Fp$. None of $\phi(I_5)$ has rank $7$. Hence $b_{0,0,0,2,5,7}^{1,3}(L)=0$.

To conclude $\dim(I_4)=2$ case, we have
\begin{align*}
	b_{0,0,0,2,4,8}(L)&=p^2+4p+1,&b_{0,0,0,2,5,7}(L)&=0.
\end{align*}

\subsection{Case 2: $\dim(I_4)=1$}
Now let us consider the case where $\dim(I_4)=1$. We need to compute $b_{0,0,0,1,5,8}(L)$. 

Let us fix a 1-dimensional subspace $I_4$ of $L_4$. Here we have
\[I_4=\begin{pmatrix}
	1&a_2&a_3
\end{pmatrix},
\begin{pmatrix}
	0&1&a_3
\end{pmatrix},
\begin{pmatrix}
	0&0&1
\end{pmatrix}\]
where $a_2,a_3\in\Fp$, which matches with the fact that there are $\binom{3}{1}_p=p^2+p+1$ 1-dimensional $I_4$ in $L_4$. As before, we separate our cases accordingly.

\subsubsection{Case 2-1: $I_4=\begin{pmatrix}
		1&a_2&a_3
	\end{pmatrix}$}\label{subsubsec:case2-1}

 Let $b_{\bfm}^{2,1}(L)$ denote the number of graded ideal $I\idealgr L$ of dimension $\bfm$ where \[I_4=\begin{pmatrix}
 	1&a_2&a_3
 \end{pmatrix}.\] Note that there are $p^2$ such $I_4$ in $L_4$. This gives
\begin{align*}\phi(I_4)&=\begin{pmatrix}
		1&a_2&a_3&0&0&0\\
		0&0&0&1&a_2&a_3
	\end{pmatrix}.
\end{align*}

Since we need $\dim(I_5)=5$, there are four possible forms of $I_5$, namely
\begin{align*}\begin{pmatrix}
		1&a_2&a_3&0&0&0\\
		0&0&0&1&a_2&a_3\\
		0&1&0&0&0&b_6\\
		0&0&1&0&0&c_6\\
		0&0&0&0&1&e_6
	\end{pmatrix},&
	\begin{pmatrix}
		1&a_2&a_3&0&0&0\\
		0&0&0&1&a_2&0\\
		0&1&0&0&b_5&0\\
		0&0&1&0&c_5&0\\
		0&0&0&0&0&1
	\end{pmatrix},\\
	\begin{pmatrix}
		1&a_2&a_3&0&0&0\\
		0&0&0&1&0&0\\
		0&1&b_3&0&0&0\\
		0&0&0&0&1&0\\
		0&0&0&0&0&1
	\end{pmatrix},&
	\begin{pmatrix}
		1&a_2&0&0&0&0\\
		0&0&0&1&0&0\\
		0&0&1&0&0&0\\
		0&0&0&0&1&0\\
		0&0&0&0&0&1
	\end{pmatrix},
\end{align*}
where $b_3,b_5,b_6,c_5,c_6,e_6\in\Fp$. Note that it matches with the fact that there are $\binom{4}{3}_p=p^3+p^2+p+1$ 5-dimensional subspaces in $L_5$ containing $I_4$. 

Let \begin{align*}I_5=\begin{pmatrix}
		1&a_2&a_3&0&0&0\\
		0&0&0&1&a_2&a_3\\
		0&1&0&0&0&b_6\\
		0&0&1&0&0&c_6\\
		0&0&0&0&1&e_6
	\end{pmatrix}
\end{align*}
We have
\begin{align*}
	\phi(I_5)=\begin{pmatrix}
			1 & a_2 & a_3 & 0 & 0 & 0 & 0 & 0 & 0 \\
			0 & 0 & 0 & 1 & a_2 & a_3 & 0 & 0 & 0 \\
			0 & 1 & 0 & 0 & 0 & b_6 & 0 & 0 & 0 \\
			0 & 0 & 1 & 0 & 0 & c_6 & 0 & 0 & 0 \\
			0 & 0 & 0 & 0 & 1 & e_6 & 0 & 0 & 0 \\
			0 & -1 & 0 & 2 & 0 & 0 & a_2 & a_3 & 0 \\
			0 & 0 & 1 & 0 & -3 & -a_2 & 0 & 2 a_2+3 & a_3 \\
			0 & 0 & 0 & 0 & 0 & 0 & 1 & 0 & b_6 \\
			0 & 0 & 0 & 0 & 0 & 0 & 0 & 1 & c_6 \\
			0 & 0 & 0 & 0 & 0 & -1 & 0 & 2 & e_6 
	\end{pmatrix}.
\end{align*}
One can check that for $p\geq5$, this has rank 8 if and only if $a_2,a_3,b_6,c_6,e_6\in\Fp$ satisfies the system of equations
\begin{align*}
	0=&3 a_2 b_6 c_6-3 a_2 b_6 e_6+a_2^2b_6-a_3 b_6+4 a_2^2 c_6 e_6+6 a_2 c_6 e_6-3 a_3 c_6 e_6\\
	&-3 a_3 a_2 c_6+a_3 c_6^2-6 a_3 c_6-2 a_3 a_2 e_6+2 a_3^2+3 b_6 c_6,\\
	0=&2 a_2 b_6 c_6-4 a_2 b_6 e_6-2 a_3 b_6-3 a_2 b_6+a_3 c_6 e_6\\
	&-3 a_3 e_6^2-7 a_2 a_3 e_6-6 a_3 e_6+4 a_2^2 e_6^2+6 a_2 e_6^2+3 a_3^2+3 b_6 e_6,\\
	0=&7 a_2 c_6 e_6-2 a_2 c_6^2+3 a_2 c_6+a_2^2 e_6-3 a_2 e_6^2-a_2 a_3,\\
	0=&-a_2 b_6-4 a_2 c_6 e_6+3 a_3 c_6+2 a_2 e_6^2-2 a_3 e_6-2 b_6 c_6+b_6 e_6,\\
	0=&-a_2 e_6+a_3-7 c_6 e_6+2 c_6^2-3 c_6+3 e_6^2,\\
	0=&-3 a_2 b_6+2 a_2 c_6 e_6-4 a_2 c_6^2+6 a_2 c_6+9 a_3 c_6+2 a_2^2 e_6-6 a_3 e_6-2 a_3 a_2-6 b_6 c_6\\
	&+3 b_6 e_6.
\end{align*}
From the 5th equation, note that $a_3=a_2 d_6+7 c_6 d_6-2 c_6^2+3 c_6-3 d_6^2$. Substituting  this, luckily the rest of the equations become   
\[\left(-a_2-2 c_6+e_6\right)b_6 -a_2 c_6 e_6+25 c_6^2 e_6-23 c_6 e_6^2-6 c_6 e_6-6 c_6^3+9 c_6^2+6 e_6^3=0,\]
a linear equation in $b_6$. 

First, suppose $-a_2-2 c_6+e_6\neq0$. Then for such $a_2,c_6,e_6\in\Fp$, we simply get \[b_6=\frac{-a_2 c_6 e_6+25 c_6^2 e_6-23 c_6 e_6^2-6 c_6 e_6-6 c_6^3+9 c_6^2+6 e_6^3}{a_2+2 c_6-e_6}\] and \[a_3=a_2 d_6+7 c_6 d_6-2 c_6^2+3 c_6-3 d_6^2,\] giving $p^3-p^2$ cases.

Now, suppose $-a_2-2 c_6+e_6=0$. Putting $e_6=a_2+2c_6$, our equation now reduces into 
\begin{equation}\label{eq:residue}
12 a_2^2 c_6+3 a_2 c_6^2-6 a_2 c_6+6 a_2^3-3 c_6^2=0,
\end{equation}
where we can have a free variable $b_{6}\in\Fp$. Solving \eqref{eq:residue}, one get either 
\begin{enumerate}
\item $a_2=1$ and $c_6=-1$, giving 1 case, or
\item $a_2\neq1$ and $c_6=\frac{-2 a_2^2+a_2\pm a_2\sqrt{2 a_2^2-2 a_2+1}}{a_2-1}$.
\end{enumerate}
The latter gives
\begin{itemize}
	\item $p-2$ possible $(a_2,c_6)\in\Fp^{2}$, if $p\equiv 3,5\mod8$, and
	\item $p-4$ possible $(a_2,c_6)\in\Fp^{2}$, if $p\equiv 1,7\mod8$.
\end{itemize}
Hence for $-a_2-2 c_6+e_6=0$ we have
\begin{itemize}
	\item $p(p-2+1)=p^2-p$ possible $(a_2,a_3,b_6,c_6,e_6)\in\Fp^{5}$, if $p\equiv 3,5\mod8$, and
	\item $p(p-4+1)=p^2-3p$ possible $(a_2,a_3,b_6,c_6,e_6)\in\Fp^{5}$, if $p\equiv 1,7\mod8$.
\end{itemize}
Summing all up, one can see that
\begin{itemize}
	\item $p^3-p$ of 	$\phi(I_5)$ have rank 8, if $p\equiv3,5\mod8$, and
	\item  $p^3-3p$ of $\phi(I_5)$ have rank 8, if $p\equiv1,7\mod8$.
\end{itemize}
Similarly, let \begin{align*}I_5=\begin{pmatrix}
		1&a_2&a_3&0&0&0\\
		0&0&0&1&a_2&0\\
		0&1&0&0&b_5&0\\
		0&0&1&0&c_5&0\\
		0&0&0&0&0&1
	\end{pmatrix}.
\end{align*}
We have 
\begin{align*}
	\phi(I_5)=\begin{pmatrix}
			1 & a_2 & a_3 & 0 & 0 & 0 & 0 & 0 & 0 \\
			0 & 0 & 0 & 1 & a_2 & 0 & 0 & 0 & 0 \\
			0 & 1 & 0 & 0 & b_5 & 0 & 0 & 0 & 0 \\
			0 & 0 & 1 & 0 & c_5 & 0 & 0 & 0 & 0 \\
			0 & 0 & 0 & 0 & 0 & 1 & 0 & 0 & 0 \\
			0 & -1 & 0 & 2 & 0 & 0 & a_2 & a_3 & 0 \\
			0 & 0 & 1 & 0 & -3 & -a_2 & 0 & 2 a_2+3 & 0 \\
			0 & 0 & 0 & 0 & 0 & -b_5 & 1 & 2 b_5 & 0 \\
			0 & 0 & 0 & 0 & 0 & -c_5 & 0 & 2 c_5+1 & 0 \\
			0 & 0 & 0 & 0 & 0 & 0 & 0 & 0 & 1 
	\end{pmatrix},
\end{align*}
and it has  rank 8 if 
\begin{itemize}
\item $b_5=2a_2$ and $c_5=-3$, or
\item $a_2=1$, $a_3=4$, and $c_5=-\frac{1}{2}$, or 
\item $a_2\neq1,b_5=\frac{-8 a_2^2-12 a_2+5 a_3}{6(a_2-1)}$, and $c_5=-\frac{1}{2}$,
\end{itemize} giving a total of $p^2+p+p(p-1)=2p^2$ possibilities.

Let \begin{align*}I_5=\begin{pmatrix}
		1&a_2&a_3&0&0&0\\
		0&0&0&1&0&0\\
		0&1&b_3&0&0&0\\
		0&0&0&0&1&0\\
		0&0&0&0&0&1
	\end{pmatrix}.
\end{align*}
We have 
\begin{align*}
	\phi(I_5)=\begin{pmatrix}
			1 & a_2 & a_3 & 0 & 0 & 0 & 0 & 0 & 0 \\
			0 & 0 & 0 & 1 & 0 & 0 & 0 & 0 & 0 \\
			0 & 1 & b_3 & 0 & 0 & 0 & 0 & 0 & 0 \\
			0 & 0 & 0 & 0 & 1 & 0 & 0 & 0 & 0 \\
			0 & 0 & 0 & 0 & 0 & 1 & 0 & 0 & 0 \\
			0 & -1 & 0 & 2 & 0 & 0 & a_2 & a_3 & 0 \\
			0 & 0 & 1 & 0 & -3 & 0 & 0 & 3 & 0 \\
			0 & 0 & 0 & 0 & 0 & 0 & 1 & b_3 & 0 \\
			0 & 0 & 0 & 0 & 0 & -1 & 0 & 2 & 0 \\
			0 & 0 & 0 & 0 & 0 & 0 & 0 & 0 & 1 
	\end{pmatrix},
\end{align*}
and none of them has rank 8. 

Finally, let \begin{align*}I_5=\begin{pmatrix}
		1&a_2&0&0&0&0\\
		0&0&0&1&0&0\\
		0&0&1&0&0&0\\
		0&0&0&0&1&0\\
		0&0&0&0&0&1
	\end{pmatrix}.
\end{align*}
We have 
\begin{align*}
	\phi(I_5)=\begin{pmatrix}
			1 & a_2 & 0 & 0 & 0 & 0 & 0 & 0 & 0 \\
			0 & 0 & 0 & 1 & 0 & 0 & 0 & 0 & 0 \\
			0 & 0 & 1 & 0 & 0 & 0 & 0 & 0 & 0 \\
			0 & 0 & 0 & 0 & 1 & 0 & 0 & 0 & 0 \\
			0 & 0 & 0 & 0 & 0 & 1 & 0 & 0 & 0 \\
			0 & -1 & 0 & 2 & 0 & 0 & a_2 & 0 & 0 \\
			0 & 0 & 1 & 0 & -3 & 0 & 0 & 3 & 0 \\
			0 & 0 & 0 & 0 & 0 & 0 & 0 & 1 & 0 \\
			0 & 0 & 0 & 0 & 0 & -1 & 0 & 2 & 0 \\
			0 & 0 & 0 & 0 & 0 & 0 & 0 & 0 & 1 
	\end{pmatrix},
\end{align*}
and it always has rank 8, giving $p$ possibilities. 

To summarize, we have
\begin{align*}
	b_{0,0,0,1,5,8}^{2,1}(L)&=\begin{cases}
		p^3+2p^2&\textrm{if }p\equiv 3,5\mod 8,\\
		p^3+2p^2-2p&\textrm{if }p\equiv 1,2\mod 8,
	\end{cases}.
\end{align*}

\subsubsection{Case 2-2: $I_4=\begin{pmatrix}
		0&1&a_3
	\end{pmatrix}$}

Let $b_{\bfm}^{2,2}(L)$ denote the number of graded ideal $I\idealgr L$ of dimension $\bfm$ where \[I_4=\begin{pmatrix}
	0&1&a_3
\end{pmatrix}.\] Note that there are $p$ such $I_4$ in $L_4$. This gives
\begin{align*}\phi(I_4)&=\begin{pmatrix}
		0&1&a_3&0&0&0\\
		0&0&0&0&1&a_3
	\end{pmatrix}.
\end{align*}
There are four possible forms of $I_5$, namely
\begin{align*}\begin{pmatrix}
		0&1&a_3&0&0&0\\
		0&0&0&0&1&a_3\\
		1&0&0&0&0&b_6\\
		0&0&1&0&0&c_6\\
		0&0&0&1&0&e_6
	\end{pmatrix},&
	\begin{pmatrix}
		0&1&a_3&0&0&0\\
		0&0&0&0&1&0\\
		1&0&0&b_4&0&0\\
		0&0&1&c_4&0&0\\
		0&0&0&0&0&1
	\end{pmatrix},\\
	\begin{pmatrix}
		0&1&a_3&0&0&0\\
		0&0&0&0&1&0\\
		1&0&b_3&0&0&0\\
		0&0&0&1&0&0\\
		0&0&0&0&0&1
	\end{pmatrix},&
	\begin{pmatrix}
		0&1&a_3&0&0&0\\
		0&0&0&0&1&0\\
		0&0&1&0&0&0\\
		0&0&0&1&0&0\\
		0&0&0&0&0&1
	\end{pmatrix},
\end{align*}
where $b_3,b_4,b_6,c_4,c_6,e_6\in\Fp$. 

Let \begin{align*}I_5=\begin{pmatrix}
		0&1&a_3&0&0&0\\
		0&0&0&0&1&a_3\\
		1&0&0&0&0&b_6\\
		0&0&1&0&0&c_6\\
		0&0&0&1&0&e_6
	\end{pmatrix}.
\end{align*}

We have
\begin{align*}
	\phi(I_5)=\begin{pmatrix}
			0 & 1 & a_3 & 0 & 0 & 0 & 0 & 0 & 0 \\
			0 & 0 & 0 & 0 & 1 & a_3 & 0 & 0 & 0 \\
			1 & 0 & 0 & 0 & 0 & b_6 & 0 & 0 & 0 \\
			0 & 0 & 1 & 0 & 0 & c_6 & 0 & 0 & 0 \\
			0 & 0 & 0 & 1 & 0 & e_6 & 0 & 0 & 0 \\
			0 & 0 & 0 & 0 & 0 & 0 & 1 & a_3 & 0 \\
			0 & 0 & 0 & 0 & 0 & -1 & 0 & 2 & a_3 \\
			0 & -1 & 0 & 2 & 0 & 0 & 0 & 0 & b_6 \\
			0 & 0 & 0 & 0 & 0 & 0 & 0 & 1 & c_6 \\
			0 & 0 & 1 & 0 & -3 & 0 & 0 & 3 & e_6 
	\end{pmatrix},
\end{align*}
One can check that for $p\geq5$, $p^2$ of them have rank 8.

Let \begin{align*}I_5=\begin{pmatrix}
		0&1&a_3&0&0&0\\
		0&0&0&0&1&0\\
		1&0&0&b_4&0&0\\
		0&0&1&c_4&0&0\\
		0&0&0&0&0&1
	\end{pmatrix}.
\end{align*}
We have
\begin{align*}
	\phi(I_5)=\begin{pmatrix}
		0 & 1 & a_3 & 0 & 0 & 0 & 0 & 0 & 0 \\
		0 & 0 & 0 & 0 & 1 & 0 & 0 & 0 & 0 \\
		1 & 0 & 0 & b_4 & 0 & 0 & 0 & 0 & 0 \\
		0 & 0 & 1 & c_4 & 0 & 0 & 0 & 0 & 0 \\
		0 & 0 & 0 & 0 & 0 & 1 & 0 & 0 & 0 \\
		0 & 0 & 0 & 0 & 0 & 0 & 1 & a_3 & 0 \\
		0 & 0 & 0 & 0 & 0 & -1 & 0 & 2 & 0 \\
		0 & -1 & b_4 & 2 & -3 b_4 & 0 & 0 & 3 b_4 & 0 \\
		0 & 0 & c_4 & 0 & -3 c_4 & 0 & 0 & 3 c_4+1 & 0 \\
		0 & 0 & 0 & 0 & 0 & 0 & 0 & 0 & 1 
	\end{pmatrix},
\end{align*}
and none of them has rank 8.

Let \begin{align*}I_5=\begin{pmatrix}
		0&1&a_3&0&0&0\\
		0&0&0&0&1&0\\
		1&0&b_3&0&0&0\\
		0&0&0&1&0&0\\
		0&0&0&0&0&1
	\end{pmatrix}.
\end{align*}
We have
\begin{align*}\begin{pmatrix}
	0 & 1 & a_3 & 0 & 0 & 0 & 0 & 0 & 0 \\
	0 & 0 & 0 & 0 & 1 & 0 & 0 & 0 & 0 \\
	1 & 0 & b_3 & 0 & 0 & 0 & 0 & 0 & 0 \\
	0 & 0 & 0 & 1 & 0 & 0 & 0 & 0 & 0 \\
	0 & 0 & 0 & 0 & 0 & 1 & 0 & 0 & 0 \\
	0 & 0 & 0 & 0 & 0 & 0 & 1 & a_3 & 0 \\
	0 & 0 & 0 & 0 & 0 & -1 & 0 & 2 & 0 \\
	0 & -1 & 0 & 2 & 0 & 0 & 0 & b_3 & 0 \\
	0 & 0 & 1 & 0 & -3 & 0 & 0 & 3 & 0 \\
	0 & 0 & 0 & 0 & 0 & 0 & 0 & 0 & 1 
\end{pmatrix}
\end{align*}
and none of them has rank 8.

Let \begin{align*}I_5=\begin{pmatrix}
		0&1&a_3&0&0&0\\
		0&0&0&0&1&0\\
		0&0&1&0&0&0\\
		0&0&0&1&0&0\\
		0&0&0&0&0&1
	\end{pmatrix}.
\end{align*}
We have
\begin{align*}
	\phi(I_5)=	\phi(I_5)=\begin{pmatrix}
		0 & 1 & a_3 & 0 & 0 & 0 & 0 & 0 & 0 \\
		0 & 0 & 0 & 0 & 1 & 0 & 0 & 0 & 0 \\
		0 & 0 & 1 & 0 & 0 & 0 & 0 & 0 & 0 \\
		0 & 0 & 0 & 1 & 0 & 0 & 0 & 0 & 0 \\
		0 & 0 & 0 & 0 & 0 & 1 & 0 & 0 & 0 \\
		0 & 0 & 0 & 0 & 0 & 0 & 1 & a_3 & 0 \\
		0 & 0 & 0 & 0 & 0 & -1 & 0 & 2 & 0 \\
		0 & 0 & 0 & 0 & 0 & 0 & 0 & 1 & 0 \\
		0 & 0 & 1 & 0 & -3 & 0 & 0 & 3 & 0 \\
		0 & 0 & 0 & 0 & 0 & 0 & 0 & 0 & 1 
	\end{pmatrix},
\end{align*}
and it always has rank 8. To summarize, we have
\begin{align*}
	b_{0,0,0,1,5,8}^{2,2}(L)&=p^2+p.
\end{align*}

\subsubsection{Case 2-3: $I_4=\begin{pmatrix}
		0&0&1
	\end{pmatrix}$}

Let $b_{\bfm}^{2,3}(L)$ denote the number of graded ideal $I\idealgr L$ of dimension $\bfm$ where \[I_4=\begin{pmatrix}
	0&0&1
\end{pmatrix}.\] Note that there are exactly 1 such $I_4$ in $L_4$. This gives
\begin{align*}\phi(I_4)&=\begin{pmatrix}
		0&0&1&0&0&0\\
		0&0&0&0&0&1
	\end{pmatrix}.
\end{align*}
Suppose $\dim(I_5)=5$. Here there are four possible forms of $I_5$, namely
\begin{align*}\begin{pmatrix}
		0&0&1&0&0&0\\
		0&0&0&0&0&1\\
		1&0&0&0&b_5&0\\
		0&1&0&0&c_5&0\\
		0&0&0&1&e_5&0
	\end{pmatrix},&
	\begin{pmatrix}
		0&0&1&0&0&0\\
		0&0&0&0&0&1\\
		1&0&0&b_4&0&0\\
		0&1&0&c_4&0&0\\
		0&0&0&0&1&0
	\end{pmatrix},\\
	\begin{pmatrix}
		0&0&1&0&0&0\\
		0&0&0&0&0&1\\
		1&b_2&0&0&0&0\\
		0&0&0&1&0&0\\
		0&0&0&0&1&0
	\end{pmatrix},&
	\begin{pmatrix}
		0&0&1&0&0&0\\
		0&0&0&0&0&1\\
		0&1&0&0&0&0\\
		0&0&0&1&0&0\\
		0&0&0&0&1&0
	\end{pmatrix},
\end{align*}
where $b_2,b_4,b_5,c_4,c_5,e_5\in\Fp$. 

Let \begin{align*}I_5=\begin{pmatrix}
		0&0&1&0&0&0\\
		0&0&0&0&0&1\\
		1&0&0&0&b_5&0\\
		0&1&0&0&c_5&0\\
		0&0&0&1&e_5&0
	\end{pmatrix}.
\end{align*}

We have
\begin{align*}
	\phi(I_5)=\begin{pmatrix}
			0 & 0 & 1 & 0 & 0 & 0 & 0 & 0 & 0 \\
			0 & 0 & 0 & 0 & 0 & 1 & 0 & 0 & 0 \\
			1 & 0 & 0 & 0 & b_5 & 0 & 0 & 0 & 0 \\
			0 & 1 & 0 & 0 & c_5 & 0 & 0 & 0 & 0 \\
			0 & 0 & 0 & 1 & e_5 & 0 & 0 & 0 & 0 \\
			0 & 0 & 0 & 0 & 0 & 0 & 0 & 1 & 0 \\
			0 & 0 & 0 & 0 & 0 & 0 & 0 & 0 & 1 \\
			0 & -1 & 0 & 2 & 0 & -b_5 & 0 & 2 b_5 & 0 \\
			0 & 0 & 0 & 0 & 0 & -c_5 & 1 & 2 c_5 & 0 \\
			0 & 0 & 1 & 0 & -3 & -e_5 & 0 & 2 e_5+3 & 0 
	\end{pmatrix},
\end{align*}
and none of them has rank 8.

Let \begin{align*}I_5=\begin{pmatrix}
		0&0&1&0&0&0\\
		0&0&0&0&0&1\\
		1&0&0&b_4&0&0\\
		0&1&0&c_4&0&0\\
		0&0&0&0&1&0
	\end{pmatrix}.
\end{align*}

We have
\begin{align*}
	\phi(I_5)=\begin{pmatrix}
			0 & 0 & 1 & 0 & 0 & 0 & 0 & 0 & 0 \\
			0 & 0 & 0 & 0 & 0 & 1 & 0 & 0 & 0 \\
			1 & 0 & 0 & b_4 & 0 & 0 & 0 & 0 & 0 \\
			0 & 1 & 0 & c_4 & 0 & 0 & 0 & 0 & 0 \\
			0 & 0 & 0 & 0 & 1 & 0 & 0 & 0 & 0 \\
			0 & 0 & 0 & 0 & 0 & 0 & 0 & 1 & 0 \\
			0 & 0 & 0 & 0 & 0 & 0 & 0 & 0 & 1 \\
			0 & -1 & b_4 & 2 & -3 b_4 & 0 & 0 & 3 b_4 & 0 \\
			0 & 0 & c_4 & 0 & -3 c_4 & 0 & 1 & 3 c_4 & 0 \\
			0 & 0 & 0 & 0 & 0 & -1 & 0 & 2 & 0
	\end{pmatrix},
\end{align*}
and it has rank 8 if $c_4=-2$, giving $p$ possibilities.

Let \begin{align*}I_5=\begin{pmatrix}
		0&0&1&0&0&0\\
		0&0&0&0&0&1\\
		1&b_2&0&0&0&0\\
		0&0&0&1&0&0\\
		0&0&0&0&1&0
	\end{pmatrix}
\end{align*}

We have
\begin{align*}
	\phi(I_5)=\begin{pmatrix}
			0 & 0 & 1 & 0 & 0 & 0 & 0 & 0 & 0 \\
			0 & 0 & 0 & 0 & 0 & 1 & 0 & 0 & 0 \\
			1 & b_2 & 0 & 0 & 0 & 0 & 0 & 0 & 0 \\
			0 & 0 & 0 & 1 & 0 & 0 & 0 & 0 & 0 \\
			0 & 0 & 0 & 0 & 1 & 0 & 0 & 0 & 0 \\
			0 & 0 & 0 & 0 & 0 & 0 & 0 & 1 & 0 \\
			0 & 0 & 0 & 0 & 0 & 0 & 0 & 0 & 1 \\
			0 & -1 & 0 & 2 & 0 & 0 & b_2 & 0 & 0 \\
			0 & 0 & 1 & 0 & -3 & 0 & 0 & 3 & 0 \\
			0 & 0 & 0 & 0 & 0 & -1 & 0 & 2 & 0 
	\end{pmatrix},
\end{align*}
and it always has rank 8, giving $p$ possibilities.

Let \begin{align*}I_5=\begin{pmatrix}
		0&0&1&0&0&0\\
		0&0&0&0&0&1\\
		0&1&0&0&0&0\\
		0&0&0&1&0&0\\
		0&0&0&0&1&0
	\end{pmatrix}
\end{align*}

We have
\begin{align*}
	\phi(I_5)=\begin{pmatrix}
			0 & 0 & 1 & 0 & 0 & 0 & 0 & 0 & 0 \\
			0 & 0 & 0 & 0 & 0 & 1 & 0 & 0 & 0 \\
			0 & 1 & 0 & 0 & 0 & 0 & 0 & 0 & 0 \\
			0 & 0 & 0 & 1 & 0 & 0 & 0 & 0 & 0 \\
			0 & 0 & 0 & 0 & 1 & 0 & 0 & 0 & 0 \\
			0 & 0 & 0 & 0 & 0 & 0 & 0 & 1 & 0 \\
			0 & 0 & 0 & 0 & 0 & 0 & 0 & 0 & 1 \\
			0 & 0 & 0 & 0 & 0 & 0 & 1 & 0 & 0 \\
			0 & 0 & 1 & 0 & -3 & 0 & 0 & 3 & 0 \\
			0 & 0 & 0 & 0 & 0 & -1 & 0 & 2 & 0 
	\end{pmatrix},
\end{align*}
and it always has rank 8, giving 1 possibility. To summarize, we have
\begin{align*}
	b_{0,0,0,1,5,8}^{2,3}(L)&=2p+1,
\end{align*}
and
\begin{align*}
	b_{0,0,0,1,5,8}(L)&=\begin{cases}
		p^3+3p^2+3p+1&\textrm{if }p\equiv 3,5\mod 8,\\
		p^3+3p^2+p+1&\textrm{if }p\equiv 1,7\mod 8,
	\end{cases}.
\end{align*}

To conclude, we have

\begin{align*}
	b_{0,0,0,2,4,8}(L)&=p^2+4p+1,\\
	b_{0,0,0,2,5,7}(L)&=0,\\
	b_{0,0,0,1,4,9}(L)&=p^6+2p^5+4p^4+4p^3+4p^2+2p+1,\\
	b_{0,0,0,1,5,8}(L)&=\begin{cases}
		p^3+3p^2+3p+1&\textrm{if }p\equiv 3,5\mod 8,\\
		p^3+3p^2+p+1&\textrm{if }p\equiv 1,7\mod 8,\end{cases},\\
	b_{0,0,0,0,5,9}(L)&=p^5+p^4+p^3+p^2+p+1,	
\end{align*}
which gives
\begin{align*}
		a_{p^{9}}^{\idealgr}(L)&=b_{0,0,0,2,4,8}(L)+b_{0,0,0,2,5,7}(L)+b_{0,0,0,1,4,9}(L)+b_{0,0,0,1,5,8}(L)+b_{0,0,0,0,5,9}(L)\\
		&=\begin{cases}
			p^6+3p^5+5p^4+6p^3+9p^2+10p+4&p\equiv3,5\mod8,\\
			p^6+3p^5+5p^4+6p^3+9p^2+8p+4&p\equiv1,7\mod8,
		\end{cases}
\end{align*}
as required. In particular, $a_{p^{9}}^{\idealgr}(L)$ is not uniformly given by a polynomial in $p$. Hence $\zeta_{\mff_{6,2}(\Fp)}^{\idealgr}(s)$ is not $\Fp$-uniform.
\end{proof}

\begin{rem}
	As mentioned at the end of Section \ref{sec:nsg}, $b_{0,0,0,1,5,8}(L)$ is one of the constituents of $a_2(n,4,p)(\mff_{c,2}(\Fp))$ for $c\geq6$, which stops $\zeta_{\mff_{6,2}}^{\idealgr}(s)$ to be $\Fp$-uniform. Unfortunately, at the current stage it looks out of reach to fully compute  $a_2(n,4,p)(\mff_{6,2}(\Fp))$ and $\zeta_{\mff_{6,2}(\Fp)}^{\idealgr}(s)$. In particular, we do not even know whether $\zeta_{\mff_{6,2}}^{\idealgr}(s)$ is just $\Fp$-finitely uniform, or even $\Fp$-non-uniform.
\end{rem}

\section{Further questions}\label{sec:further}
\subsection{More computations of $\mff_{c,d}(\Fp)$}
It is interesting that the $\Fp$-uniformity of  $\zeta_{\mff_{c,2}}^{\idealgr}(s)$ fails at $c=6$, whereas it is known that  $\zeta_{\mff_{2,d}}^{\ideal}(s)$ is $\Fp$-uniform (\cite[Theorem 5.1]{Lee/20arxiv1}) and $\Zp$-uniform (\cite[Theorem 2]{GSS/88}) for all $d\in\N$. Although we do not know $\zeta_{\mff_{c,2}(\Fp)}^{\idealgr}(s)$ for $c\geq7$ yet, the authors believe it would be really unlikely to be uniformly given by a polynomial in $p$.  
\begin{con}
	$\zeta_{\mff_{c,2}}^{\idealgr}(s)$ is not $\Fp$-uniform for all $c\geq6$.
\end{con}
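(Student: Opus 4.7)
The plan is to proceed by induction on $c$, with Theorem \ref{thm:c=6} serving as the base case. The natural structural decomposition is that every graded ideal of $\mff_{c+1,2}(\Fp)$ is uniquely a pair $(J, I_{c+1})$, where $J = I_1 \oplus \cdots \oplus I_c$ is a graded ideal of $\mff_{c,2}(\Fp)$ and $I_{c+1} \leq L_{c+1}$ is any subspace containing $\phi(J_c)$. This yields
\begin{equation*}
\zeta_{\mff_{c+1,2}(\Fp)}^{\idealgr}(s) = \sum_{J \idealgr \mff_{c,2}(\Fp)} |L_{\leq c} : J|^{-s} \sum_{k=0}^{d_{c+1}-\dim\phi(J_c)} \binom{d_{c+1}-\dim\phi(J_c)}{k}_{p} t^k,
\end{equation*}
where $L_{\leq c} = L_1 \oplus \cdots \oplus L_c$. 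Isolating the term $I_{c+1} = L_{c+1}$ gives a natural embedding of $\zeta_{\mff_{c,2}(\Fp)}^{\idealgr}(s)$ into $\zeta_{\mff_{c+1,2}(\Fp)}^{\idealgr}(s)$, so that
\[
a_{p^9}^{\idealgr}(\mff_{c+1,2}(\Fp)) = a_{p^9}^{\idealgr}(\mff_{c,2}(\Fp)) + R_{c+1}(p),
\]
where $R_{c+1}(p)$ collects the contributions with $I_{c+1} \subsetneq L_{c+1}$.

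To close the induction, the plan is to show that $R_{c+1}(p)$ is uniformly polynomial in $p$, so the $\pm 2p$ fluctuation inherited from Theorem \ref{thm:c=6} cannot be cancelled. Since $R_{c+1}(p)$ is a sum over $J$ of codimension at most $8$ in $\mff_{c,2}(\Fp)$, weighted by Gaussian binomials $\binom{d_{c+1}-\dim\phi(J_c)}{k}_p$, its uniformity reduces to two auxiliary claims: (i) for each $m' < 9$, the coefficient $a_{p^{m'}}^{\idealgr}(\mff_{c,2}(\Fp))$ is $\Fp$-uniform; and (ii) the dimension $\dim\phi(J_c)$ depends on $J$ only through its dimension vector (a higher-class analogue of Lemma \ref{lem:dimension}). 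Given (i) and (ii), $R_{c+1}(p)$ is $\Fp$-uniform and the induction closes. Claim (i) should be accessible by an auxiliary induction on codimension, working level-by-level as in Section \ref{sec:cleq5}, since at codimension below $9$ the profiles that appear involve only ``small'' subspaces in the early layers.

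The hard part will be (ii), since the proof of Theorem \ref{thm:c=6} exhibits precisely how the higher-class analogue of Lemma \ref{lem:dimension} fails: at $c = 6$, $k = 5$, the dimension of $\phi(I_5)$ is governed by a rank-drop locus cut out by a quadric whose $\Fp$-point count depends on $p \bmod 8$. For $c \geq 7$, the Lie-theoretic machinery of Section \ref{sec:fc2}, the groups $\Gamma_n$, the kernel description of $\chi$, and the collapse spaces $\Lambda_k$, should extend to track rank drops at each level, but understanding their arithmetic simultaneously at every level becomes intricate. A plausible backup is a direct construction: fix the profile $\bsm^{(c)} = (0,0,0,1,5,8,d_7,\ldots,d_c)$. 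Because $I_k = L_k$ for $k \geq 7$ makes the conditions $\phi(I_{k-1}) \leq L_k$ vacuous, one has $b_{\bsm^{(c)}}(\mff_{c,2}(\Fp)) = b_{(0,0,0,1,5,8)}(\mff_{6,2}(\Fp))$, which is already non-uniform; it then suffices to verify that the sum over all other codimension-$9$ profiles is $\Fp$-uniform. This reduces the conjecture to a case analysis analogous to Section \ref{sec:c=6} but performed for every $c \geq 7$, and would be most cleanly settled by identifying a single structural reason why no other codimension-$9$ profile can introduce a $p \bmod 8$ dependence.
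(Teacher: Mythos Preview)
The statement you are attempting to prove is recorded in the paper as an open \emph{conjecture}; the paper offers no proof, only the heuristic that non-uniformity at $c=6$ is unlikely to disappear for larger~$c$. There is thus no paper proof to compare against, and your proposal must stand on its own.

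Your inductive decomposition
\[
a_{p^9}^{\idealgr}(\mff_{c+1,2}(\Fp)) \;=\; a_{p^9}^{\idealgr}(\mff_{c,2}(\Fp)) + R_{c+1}(p)
\]
is correct, and the idea of isolating the persistent profile $(0,0,0,1,5,8,d_7,\dots,d_c)$ is sound. The genuine gap is in the control of $R_{c+1}(p)$. Your claim~(ii), that $\dim\phi(J_c)$ depends only on the dimension vector, is \emph{false} in the regime you need: the computation in Section~\ref{sec:c=6} shows precisely that for $5$-dimensional $I_5\leq L_5$ the rank of $\phi(I_5)$ in $L_6$ varies with the subspace and is governed by a quadric whose $\Fp$-point count depends on $p\bmod 8$. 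There is no reason to expect the map $L_6\to L_7$ to be better behaved, so once (ii) fails the Gaussian-binomial weights in $R_{c+1}(p)$ are not constant on dimension strata and your uniformity argument for $R_{c+1}$ does not go through.

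Your backup plan correctly notes that $b_{(0,0,0,1,5,8,d_7,\dots,d_c)}(\mff_{c,2}(\Fp)) = b_{(0,0,0,1,5,8)}(\mff_{6,2}(\Fp))$, but it defers all the content to ``a single structural reason why no other codimension-$9$ profile can introduce a $p\bmod 8$ dependence'', which you do not supply. For $c\geq 7$ there are genuinely new codimension-$9$ profiles with $I_7\subsetneq L_7$ (forcing $I_6\subsetneq L_6$), and counting these requires, for each admissible $8$-dimensional $I_6$ containing a given $\phi(I_5)$, the value of $\dim\phi(I_6)$ in $L_7$. This is a \emph{constrained} rank-drop problem of exactly the type that produced the non-uniformity at $c=6$; nothing in your outline excludes further $p\bmod N$ behaviour here, nor excludes cancellation of the original $\pm 2p$. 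Claim~(i) is more plausible but is likewise asserted rather than proved. As it stands, the proposal is a reasonable programme, not a proof; the conjecture remains open.
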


Also note that we could not fully compute $\zeta_{\mff_{6,2}(\Fp)}^{\idealgr}(s)$ yet. Is $\zeta_{\mff_{6,2}}^{\idealgr}(s)$ still $\Fp$-finitely uniform? Or would it be simply $\Fp$-non-uniform? How about ungraded examples? Given that the counting problems arising from the zeta functions already became complicated for $c=6$, the authors believe the following naive conjecture:
\begin{con}
	There exists $c\in\N$ such that $\zeta_{\mff_{c,2}}^{\idealgr}(s)$ is $\Fp$-non-uniform.
\end{con}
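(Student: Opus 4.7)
The overall strategy is to expand
\[
a_{p^{9}}^{\idealgr}(\mff_{6,2}(\Fp))=\sum_{\substack{\bfm=(m_1,\dots,m_6)\\ m_1+\cdots+m_6=14}} b_{\bfm}(\mff_{6,2}(\Fp)),
\]
and to cut the sum down to a short list of dimension vectors $\bfm$ that can possibly contribute. Because $(d_1,\dots,d_6)=(2,1,2,3,6,9)$ and because $I\idealgr L$ forces $\phi_1(I_k)+\phi_2(I_k)\subseteq I_{k+1}$, a short case analysis in the spirit of Lemma~\ref{lem:dimension} rules out all vectors except
\[
(0,0,0,2,4,8),\ (0,0,0,2,5,7),\ (0,0,0,1,4,9),\ (0,0,0,1,5,8),\ (0,0,0,0,5,9).
\]
My plan is to compute each of these five numbers in turn, relative to a Jacobi-reduced basis of $L$ such as the one tabulated in Table~\ref{tab:basis}, so that the action of $\phi_1,\phi_2$ on the basis is completely explicit.

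Three of the five counts are uniform for cheap reasons. The vectors $(0,0,0,1,4,9)$ and $(0,0,0,0,5,9)$ force $I_6=L_6$, so these reduce to counting by Gaussian binomials via Fact~\ref{fact:poly}, giving polynomials in $p$. The vector $(0,0,0,2,5,7)$ turns out to be vacuous, which I would verify by picking the three RREF forms of a $2$-dimensional $I_4\leq L_4$, listing the $I_5\supseteq \phi(I_4)$ of dimension $5$, and checking in each case that the resulting matrix for $\phi(I_5)$ has rank $8$ rather than $\leq 7$; this is a finite check in Table~\ref{tab:basis}. The vector $(0,0,0,2,4,8)$ likewise splits according to the three RREF forms of $I_4$, and in each subcase the rank of $\phi(I_4)$ in $L_5$ is always $4$ (so $I_5=\phi(I_4)$ is forced), while the rank of $\phi(\phi(I_4))$ in $L_6$ is generically $8$ with a small polynomial number of ``degeneracies'' — giving a polynomial count.

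The main obstacle, and the source of $\Fp$-non-uniformity, will be $b_{0,0,0,1,5,8}$. For each of the three RREF forms of a $1$-dimensional $I_4$ I would parametrize the $\binom{4}{3}_p$ choices of $I_5\leq L_5$ with $I_5\supseteq \phi(I_4)$, and then compute when $\phi(I_5)\leq L_6$ has rank exactly $8$, which is the precise condition for a compatible $8$-dimensional $I_6$ to exist. The hard subcase is $I_4=(1,a_2,a_3)$ with $I_5$ of the generic form containing three free parameters $b_6,c_6,e_6$; using the three Jacobi relations
\[
X_{6,2}=2X_{6,4}-\phi_2(X_{5,1}),\quad X_{6,3}=\phi_2(X_{5,4})+3X_{6,5}-3X_{6,7},\quad X_{6,6}=2X_{6,8}-\phi_2(X_{5,5})
\]
to eliminate rows, the rank-$8$ condition becomes a system of six polynomial equations in $(a_2,a_3,b_6,c_6,e_6)$. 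One equation is linear in $a_3$ and solves it; the remaining equations, after eliminating $b_6$ linearly under the assumption $-a_2-2c_6+e_6\neq 0$, collapse to a single quadratic in $c_6$ whose discriminant is proportional to $2a_2^{2}-2a_2+1$. Counting $a_2\in\Fp$ for which this discriminant is a nonzero square depends on whether $-1$ and $2$ are squares mod $p$, i.e.\ on $p\bmod 8$, producing the $p-2$ versus $p-4$ split. Adding up this subcase with the other (uniform) subcases and with the four uniform vectors above yields the two explicit polynomials in the statement; they agree in every coefficient except that of $p$, which differs by $2$ between the residue classes $\{3,5\}$ and $\{1,7\}$ modulo $8$, thereby witnessing the failure of $\Fp$-uniformity for $\zeta_{\mff_{6,2}(\Fp)}^{\idealgr}(s)$.
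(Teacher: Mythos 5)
Your proposal does not prove the stated conjecture: it proves the wrong, weaker statement, and this is a consequence of a terminological pitfall that the paper's own definitions make sharp.

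Recall the paper's Definition of $\Fp$-uniformity: $\zeta^{\triangleleft}_{L}(s)$ is $\Fp$-\emph{finitely uniform} if there is some \emph{finite} list $W_1,\dots,W_k\in\Q[X,Y]$ such that for almost all $p$ one has $\zeta^{\triangleleft}_{L(\Fp)}(s)=W_i(p,p^{-s})$ for some $i\in[k]$; it is $\Fp$-\emph{uniform} if one can take $k=1$; and it is $\Fp$-\emph{non-uniform} if it is \emph{not even finitely uniform}. The statement you are asked about asserts $\Fp$-non-uniformity for some $c$. Your computation — which faithfully reproduces the case analysis of Theorem~\ref{thm:c=6} for $c=6$ — establishes only that $a_{p^9}^{\idealgr}(\mff_{6,2}(\Fp))$ is given by \emph{two} distinct polynomials according to $p\bmod 8$. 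This is exactly the kind of ``polynomial on residue classes'' behavior that is \emph{compatible with} finite uniformity (with $k=2$). So you have shown failure of $\Fp$-uniformity (the $k=1$ case), not failure of $\Fp$-finite uniformity, and hence not $\Fp$-non-uniformity. Your own closing sentence — ``witnessing the failure of $\Fp$-uniformity'' — already concedes this: that is the conclusion of the paper's Theorem~\ref{thm:fin.uniform} and Theorem~\ref{thm:c=6}, not of the conjecture in question.

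This is not a bookkeeping slip; the paper leaves the statement as an open \emph{conjecture} precisely because the $\mff_{6,2}$ computation cannot distinguish ``finitely uniform with $k\geq 2$'' from ``non-uniform,'' and the authors say so explicitly in Section~7 and in the remark following Theorem~\ref{thm:c=6}. To prove the conjecture one would need a qualitatively different argument: either a full computation of $\zeta^{\idealgr}_{\mff_{c,2}(\Fp)}(s)$ for some $c$ whose coefficients are shown to require infinitely many polynomials as $p$ ranges over primes (e.g.\ by exhibiting a coefficient governed by a variety with non-PORC point count, in the style of the elliptic-curve examples for ideal zeta functions), or a structural reduction showing that some fixed coefficient $a_{p^j}^{\idealgr}(\mff_{c,2}(\Fp))$ cannot be polynomial on any finite set of residue classes. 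Nothing in your write-up, or in the paper, supplies that step.
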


In addition to our work, one can naturally ask what happens for a larger number of generators. 
\begin{qun}
	For $c,d\geq3$, can we find more pairs of $(c,d)$ such that $\zeta_{\mff_{c,d}}^{\idealgr}(s)$ or $\zeta_{\mff_{c,d}}^{\ideal}(s)$ is not $\Fp$-uniform?
\end{qun}
Unfortunately we do not know $\zeta_{\mff_{c,d}(\Fp)}^{\idealgr}(s)$ or $\zeta_{\mff_{c,d}(\Fp)}^{\ideal}(s)$ for $c,d\geq3$ yet. It would be very useful to extend the database.

\subsection{$\Zp$- and $\Fp$-uniformity}
All the work in this article concerns zeta functions of Lie algebras defined over $\Fp$ rather than over $\Zp$. In many ways this simplifies matters considerably -- we ``merely'' have to evaluate a finite sum. How far one can extend? For example, it is proven in \cite{LeeVoll/18} that $\zeta_{\mff_{c,2}}^{\idealgr}(s)$ is $\Zp$-uniform for $c\leq 4$, which is compatible with our result. In this light, one can naively guess that $\zeta_{\mff_{5,2}}^{\idealgr}(s)$ is $\Zp$-uniform but $\zeta_{\mff_{6,2}}^{\idealgr}(s)$ is not. Would it be actually true?

More generally, as discussed by the second author in \cite{Lee/20arxiv1}, there exist finitely many varieties $U_{1}^{\ideal},\ldots,U_{h}^{\ideal}$ defined over $\mathbb{Q}$,  and polynomials $W_{1}^{\ideal}(X,Y),\ldots,W_{h}^{\ideal}(X,Y)\in\mathbb{Z}[X,Y]$ such that, for almost all primes $p$, 
	\begin{equation*}
		\zeta_{\mff_{c,d}(\Fp)}^{\ideal}(s)=\sum_{j=1}^{h}|\overline{U_{j}^{\ideal}}(\Fp)|W_{j}^{\ideal}(p,t).
	\end{equation*} 
Can we relate the varieties arising from $\zeta_{\mff_{c,d}(\Fp)}^{\ideal}(s)$ to the varieties arising from 	$\zeta_{\mff_{c,d}(\Zp)}^{\ideal}(s)$? In particular:
\begin{qun}
	For any $\mff_{c,d}$, does the $\Zp$-uniformity of $\zeta_{\mff_{c,d}}^{\ideal}(s)$ always equal to its $\Fp$-uniformity?  
\end{qun} 
One way to see this problem is the following. To construct a Lie ring $L$ such that $\zeta_{\mff_{c,d}}^{\ideal}(s)$ is $\Fp$-uniform but not $\Zp$-uniform,  one approach would be to find a variety $V$ over
$\mathbb Z$ defined by a polynomial equation for which the number of points
on the reduction $V(\mathbb F_p)$ is uniformly given by a polynomial in $p$
but for which the number of points on the variety $V({\mathbb Z}/{p^n\mathbb
	Z})$ is not for some $n>1.$ This is a ``Hensel's Lemma type" problem. We know
that smooth points on a variety will lift in a well behaved uniform way from
$\mathbb F_p$ and so we will need a to produce a uniform variety with
non-uniform singular set. Then we will need to encode this variety in a
presentation for a Lie ring. It would be really interesting to check whether such encoding can be observed in free Lie rings. At present this remains out of reach.

\subsection{Amenable representations for free Lie algebras}
The other questions that have arisen from our approach relate to methods for 
producing presentations for the free Lie algebras. The
classical solution to this problem is to generate a Hall Basis for each layer of 
the lower central series. For our purposes this presentation
is somewhat intractable. In trying to understand the dimension of collapse 
of a subspace $I_k$ of the $k$-th term of the lower central
series of a free Lie algebra $L$ we make use of the maps $\phi_i: L_k 
\rightarrow L_{k+1}$. To understand the relevant properties of
these maps in a $d$-generated algebra is closely related to recursively 
producing a list of generators for $L_k$ consisting of
Lie elements of the form
\[
[X_i, w_{k-1}], \quad w_{k-1} \in L_{k-1}.
\]
The algorithm for a Hall Basis does not deliver generators of this form. One can naively ask the following question:
\begin{qun}
	Is there any general algorithm that gives a basis in the form we would like?
\end{qun}

\section*{Acknowledgments}
The second author is supported by the National Research Foundation of Korea (NRF) grant funded by the Korean government (MEST), No. 2019R1A6A1A10073437. The authors greatly thank Cornelius Griffin and Fritz Grunewald for their contribution on the earlier version of this work. The authors gratefully acknowledge inspiring mathematical discussions with Seok Hyeong Lee and Michael Vaughan-Lee.

\bibliographystyle{amsplain}
\bibliography{duSL_Uniformity}
\end{document}